\let\O\relax 
\newcommand{\O}{\ensuremath{\mathbb{O}}}
\newcommand{\B}{\mathrm{B}} 
\newcommand{\I}{\mathrm{I}} 
\newcommand{\III}{\mathrm{III}}
\newcommand{\K}{\ensuremath{\mathbb{K}}}
\let\L\relax 
\newcommand{\L}{\mathrm{L}}
\newcommand{\M}{\mathrm{M}}
\newcommand{\CB}{\mathrm{CB}}
\newcommand{\R}{\ensuremath{\mathbb{R}}}
\newcommand{\C}{\ensuremath{\mathbb{C}}}
\newcommand{\W}{\mathrm{W}}
\newcommand{\Id}{\mathrm{Id}}
\newcommand{\TRO}{\mathrm{TRO}}
\newcommand{\la}{\langle}\newcommand{\ra}{\rangle}
\renewcommand{\leq}{\ensuremath{\leqslant}}
\renewcommand{\geq}{\ensuremath{\geqslant}}
\newcommand{\qed}{\hfill \vrule height6pt  width6pt depth0pt}
\newcommand{\norm}[1]{ \| #1  \|}
\newcommand{\bnorm}[1]{ \big\| #1  \big\|}
\newcommand{\bgnorm}[1]{ \bigg\| #1  \bigg\|}
\newcommand{\xra}{\xrightarrow}
\newcommand{\co}{\colon}
\newcommand{\ot}{\otimes}
\newcommand{\ovl}{\overline}
\newcommand{\D}{\mathrm{D}}
\newcommand{\dec}{\mathrm{dec}}
\newcommand{\pdec}{\mathrm{pdec}}
\newcommand{\reg}{\mathrm{reg}}
\newcommand{\cb}{\mathrm{cb}}
\newcommand{\Dec}{\mathrm{Dec}}
\newcommand{\PDec}{\mathrm{PDec}}
\let\i\relax 
\newcommand{\i}{\mathrm{i}}
\newcommand{\ov}{\overset}
\newcommand{\Asym}{\mathrm{Asym}} 
\newcommand{\sa}{\mathrm{sa}}
\newcommand{\JC}{\mathrm{JC}}
\newcommand{\JW}{\mathrm{JW}}
\newcommand{\JB}{\mathrm{JB}}
\newcommand{\JBW}{\mathrm{JBW}}
\newcommand{\w}{\mathrm{w}} 
\renewcommand{\d}{\mathop{}\mathopen{}\mathrm{d}} 
\newcommand{\e}{\mathrm{e}} 
\renewcommand{\d}{\mathop{}\mathopen{}\mathrm{d}} 
\let\cal\relax
\newcommand{\cal}{\mathcal}
\DeclareMathOperator{\tr}{Tr} 
\DeclareMathOperator{\Ran}{Ran} 
\let\Re\relax 
\DeclareMathOperator{\Re}{Re} 
\newtheorem{thm}{Theorem}[section]
\newtheorem{defi}[thm]{Definition}
\newtheorem{prop}[thm]{Proposition}
\newtheorem{conj}[thm]{Conjecture}
\newtheorem{quest}[thm]{Question}
\newtheorem{cor}[thm]{Corollary}
\newtheorem{lemma}[thm]{Lemma}
\newtheorem{remark}[thm]{Remark}
\newtheorem{prob}[thm]{Problem}
\newtheorem{example}[thm]{Example}
\newenvironment{proof}[1][]{\noindent {\it Proof #1} : }{\hbox{~}\qed
\smallskip
}
\numberwithin{equation}{section}
\let\OLDthebibliography\thebibliography
\renewcommand\thebibliography[1]{
  \OLDthebibliography{#1}
  \setlength{\parskip}{0pt}
  \setlength{\itemsep}{0pt plus 0.3ex}
}
\begin{document}
\selectlanguage{english}
\title{\bfseries{Contractively decomposable projections on noncommutative $\L^p$-spaces}}
\date{}
\author{\bfseries{C\'edric Arhancet}\thanks{Albi, FRANCE, cedric.arhancet@protonmail.com, \href{http://sites.google.com/site/cedricarhancet}{https://sites.google.com/site/cedricarhancet}}}

\maketitle

\begin{abstract}
We describe and characterize the contractively decomposable projections on noncommutative $\L^p$-spaces. Our result relies on a new lifting result for decomposable maps of independent interest and on some tools from ergodic theory. Our theorem is new even for finite-dimensional Schatten spaces. Our description allows us to connect this topic with $\W^*$-ternary rings of operators and a slight generalization of our result for more general projections makes $\JBW^*$-triples appear in this context. We also prove that all rectangular $\L^p$-spaces associated with $\W^*$-ternary rings of operators arise as contractively decomposable complemented subspaces of noncommutative $\L^p$-spaces. Finally, we introduce a notion of $\L^p$-space associated to each $\sigma$-finite $\JBW^*$-triple and we explain the link with the context of this paper. 
\end{abstract}


\makeatletter
 \renewcommand{\@makefntext}[1]{#1}
 \makeatother
 \footnotetext{
\noindent {\it Mathematics subject classification:}
 Primary 46L51, 46L07. 
\\
{\it Key words and phrases}: noncommutative $\L^p$-spaces, projections, complemented subspaces, decomposable maps, ternary ring of operators, $\JBW^*$-triples, nonassociative $\L^p$-spaces.}

\tableofcontents

\section{Introduction}
\label{Introduction}

The investigation of the structure of projections and contractively complemented subspaces is a well-established topic of Banach space geometry. Recall that a bounded operator $P \co X \to X$ on a Banach space $X$ is a projection if $P^2=P$. If a subspace $Y$ of $X$ is the range of a contractive linear projection, we say that it is contractively complemented. See the surveys \cite{Ran01} and \cite{Mos06} for more information on the literature on the matter. Observe that by \cite[Theorem 6]{CoS70}, a subspace of a smooth Banach space $X$ can be the range of at most one projection of norm one. 

Ando proved in \cite{And66} (see also \cite{Dou65} for the case $p=1$) that the contractively complemented subspaces of a classical (=commutative) $\L^p$-space $\L^p(\Omega)$ for a finite measure space $\Omega$ are all isometrically isomorphic to an $\L^p$-space where $1 \leq p <\infty$. Moreover, he described the structure of these projections with the help of conditional expectations and multiplication operators. We refer to the papers \cite{Tza69}, \cite{BeL74}, \cite[Theorem 3 p.~162]{Lac74} and references therein for the case of general measures. 


On noncommutative $\L^p$-spaces, we cannot expect anything so simple. Indeed, if $\sigma \co S^p \to S^p$ denotes the transpose map on the Schatten space $S^p\ov{\mathrm{def}}{=} \{x \co \ell^2 \to \ell^2 : \norm{x}_p \ov{\mathrm{def}}{=} \bigl(\tr(\vert x\vert^p)\bigr)^{\frac{1}{p}}<\infty\}$, then the map $P \ov{\mathrm{def}}{=}\frac{1}{2}(\Id_{S^p} +\sigma) \co S^p \to S^p$ is a contractive projection on the subspace of symmetric matrices of $S^p$, in sharp contrast with the classical setting of measure spaces.

However, Arazy and Friedman succeeded in \cite{ArF78} and \cite{ArF92} in describing the structure of contractively complemented subspaces of $S^p$ for any $1 \leq p \leq \infty$. Such a subspace is isometrically isomorphic to a $\ell^p$-sum of $S^p$-Cartan factors of type I-IV. Recall that these Cartan factors are rectangular spaces of operators, spaces of antisymmetric operators, spaces of symmetric operators and complex spin factors. Moreover, the authors \cite[p.~99]{ArF92} explicitly raise the problem of describing contractively complemented subspaces of general noncommutative $\L^p$-spaces. 

Actually, Friedman and Russo showed in \cite{FrB85} that the range of a contractive projection on a noncommutative $\L^1$-space (=predual of von Neumann algebra) is isometric to the predual of a $\JW^*$-triple. Recall that a $\JW^*$-triple is a weak* closed subspace of the space $\B(H,K)$ of bounded operators between Hilbert spaces $H$ and $K$ which is closed under the triple product $(x,y,z) \mapsto xy^*z+zy^*x$. Furthermore, it is proved in \cite{NO02} that \textit{completely} contractively complemented subspaces of noncommutative $\L^1$-spaces coincide isometrically with the preduals of $\W^*$-ternary rings of operators. This concept has its roots in the paper \cite{Hes62} of Hestenes and has been studied by many authors. Recall that a $\W^*$-ternary ring of operators is a weak* closed subspace of the space $\B(H,K)$ for some Hilbert spaces $H$ and $K$ which is also closed under the triple product $(x,y,z) \mapsto xy^*z$. Ruan introduced a type decomposition of these objects in \cite{Rua04}. Finally, note that $\W^*$-ternary rings of operators, like $\mathrm{C}^*$-algebras, carry a natural operator space structure and injective dual operator spaces coincide with the $\W^*$-ternary rings of operators, see \cite{EOR01}. We refer to \cite{BFT12}, \cite{BuT19}, \cite{BuT13a}, \cite{BuT13b}, \cite{DoR07}, \cite{Ham92}, \cite{Ham99}, \cite {KaR02}, \cite{NeR03}, \cite{PlR19}, \cite{SaS13}, \cite{SaS17} and \cite{Zet83} for other papers on this topic.

Finally, in \cite[Theorem 1.1]{LRR09}, it is shown that any \textit{completely} 1-complemented subspace of $S^p$ is isometric to a direct sum of spaces of the form $S^p(H,K)$, where $H$ and $K$ are Hilbert spaces for $1 < p < \infty$. The proof relies on the explicit description of contractively complemented subspaces of $S^p$ of Arazy and Friedman.

The structure of 2-positive contractive projections on arbitrary noncommutative $\L^p$-spaces was completely elucidated in the paper \cite{ArR19}, three decades after the publication of \cite{ArF92}. The range of such a projection is completely order and completely isometrically isomorphic to some noncommutative $\L^p$-space. Furthermore, a description of 2-positive contractive projections was provided which was new even for Schatten spaces. The approach relies on a symmetric two-sided change of density and a lifting argument of the projection at the level $p=\infty$. 

In \cite{Arh20}, we investigated more generally the case of positive contractive projections. We highlighted the role of Jordan conditional expectations in this topic and we showed in a large number of cases that the range of a positive contractive projection is isometric to a \textit{nonassociative} $\L^p$-space associated to a $\JW^*$-algebra, a notion that we defined in \cite{Arh23}. A $\JW^*$-algebra can be viewed as a Jordan generalization of von Neumann algebras and this notion was introduced by Edwards \cite{Edw80} (see also \cite{You78}).

The goal of this paper is to investigate the case of contractively decomposable projections. Decomposable maps are a generalization of completely positive maps. Recall that a linear map $T \co \L^p(\cal{M}) \to \L^p(\cal{N})$ between noncommutative $\L^p$-spaces associated to von Neumann algebras $\cal{M}$ and $\cal{N}$ is decomposable \cite[(3.2)]{JuR04} if there exist bounded linear maps $v_1,v_2 \co \L^p(\cal{M}) \to \L^p(\cal{N})$ such that the linear map
\begin{equation}
\label{Matrice-2-2-Phi-intro}
\Phi
\ov{\mathrm{def}}{=}\begin{bmatrix}
   v_1  &  T \\
   T^\circ  &  v_2  \\
\end{bmatrix}
\co S^p_2(\L^p(\cal{M})) \to S^p_2(\L^p(\cal{N})), \quad \begin{bmatrix}
   a  &  b \\
   c &  d  \\
\end{bmatrix}\mapsto 
\begin{bmatrix}
   v_1(a)  &  T(b) \\
   T^\circ(c)  &  v_2(d)  \\
\end{bmatrix}
\end{equation}
is completely positive, where $T^\circ(c) \ov{\mathrm{def}}{=} T(c^*)^*$.  In this case, we let
\begin{equation}
\label{Norm-dec-intro}
\norm{T}_{\dec,\L^p(\cal{M}) \to \L^p(\cal{N})}
\ov{\mathrm{def}}{=} \inf \max\{\norm{v_1},\norm{v_2}\}
\end{equation}
where the infimum is taken over all maps $v_1$ and $v_2$. We say that $T$ is contractively decomposable if $\norm{T}_{\dec,\L^p(\cal{M}) \to \L^p(\cal{N})} \leq 1$. Note that if the von Neumann algebras $\cal{M}$ and $\cal{N}$ are hyperfinite, it is equivalent to saying that $T$ is contractively regular by \cite[Theorem 3.24]{ArK23}, which means that for any operator space $E$, the map $T \ot \Id_E$ induces a contraction between the vector-valued noncommutative $\L^p$-spaces $\L^p(\cal{M},E)$ and $\L^p(\cal{N},E)$. 

Our main result is the following theorem which describes precisely the structure of contractively decomposable projections acting on noncommutative $\L^p$-spaces. Roughly speaking, it says that such a projection is induced at the level $p=\infty$ by a contractively decomposable projection modulo a suitable nonsymmetric two-sided change of density.

\begin{thm}
\label{main-th-ds-intro}
Let $\cal{M}$ be a $\sigma$-finite von Neumann algebra equipped with a normal faithful state $\varphi$. Suppose $1 < p<\infty$. A bounded map $P \co \L^p(\cal{M}) \to \L^p(\cal{M})$ is a contractively decomposable projection if and only if there exist there exist a normal faithful positive linear form $\phi$ on the von Neumann algebra $\M_2(\cal{M})$, two positive elements $h,k \in \L^p(\cal{M})$ with support projection $s(h)$ and $s(k)$, some linear maps $w \co \cal{M} \to s(h)\cal{M}s(k)$, $u_1 \co \cal{M} \to s(h)\cal{M}s(h)$, $u_2 \co \cal{M} \to s(k)\cal{M}s(k)$ such that the map $Q \ov{\mathrm{def}}{=} \begin{bmatrix}
u_1 & w \\
w^\circ  & u_2 \\
\end{bmatrix}$ and the element $H \ov{\mathrm{def}}{=} \begin{bmatrix}
h & 0 \\
0  & k \\
\end{bmatrix}$ of $S^p_2(\L^p(\cal{M}))$ satisfy the following properties:
\begin{enumerate}
\item for any $x \in \L^p(\cal{M})$ we have $P(x)=P(s(h)xs(k))$,

\item $Q$ is a weak* continuous unital contractive completely positive map and its restriction on the von Neumann algebra $s(H)\M_2(\cal{M})s(H)$ is a faithful projection,

\item $s(H)$ belongs to the centralizer of $\phi$ and $\phi_{s(H)}=(\tr \ot \varphi)_{s(H)}$,

\item for any $y \in s(H)\M_2(\cal{M})s(H)$ we have
$
(\tr \ot \tr_{\varphi})(H^p Q(y))
=(\tr \ot \tr_{\varphi})(H^p y)$,


\item for any $x \in \cal{M}$, we have 
\begin{equation}
\label{relevement-dec-intro}
\quad P\big(h^{\frac{1}{2}}xk^{\frac{1}{2}}\big)
= h^{\frac{1}{2}} w(x) k^{\frac{1}{2}}, \qquad x \in \cal{M}.
\end{equation}
\end{enumerate}
In this case, the restriction of $w$ on $s(h)\cal{M}s(k)$ is a weak* continuous contractively decomposable projection and the range of this map admits a structure of a $\W^*$-ternary ring of operators.
\end{thm}
 
In the spirit of the paper \cite{Arh20}, it is expected than the range of a contractively decomposable projection is some kind of $\L^p$-space of a $\W^*$-ternary ring of operators. We did not clarify this point in this paper. In the opposite direction, we observe in Section \ref{Sec-complementation} that all rectangular $\L^p$-spaces associated with $\W^*$-ternary rings of operators arise as contractively decomposable complemented subspaces of noncommutative $\L^p$-spaces.

Actually, our main result admits a generalization (see Theorem \ref{main-th-ds-bis}) to a more general class of contractive projections, the contractively $n$-pseudo-decomposable projections defined in Section \ref{Sec-pseudo}. In this case, the range of $w$ admits a structure of a $\JW^*$-triple. Note that $\JW^*$-triples are particular cases of $\JBW^*$-triples. It is well-known that the latter objects are strongly connected to bounded symmetric domains in complex Banach spaces \cite{Isi19} \cite{Kau83} and to physical systems \cite{Fri05}.

Finally, we construct $\L^p$-spaces associated to suitable $\JBW^*$-triples in Section \ref{Sec-Lp-JBW} by using complex interpolation generalizing the construction of nonassociative $\L^p$-spaces associated to $\JBW^*$-algebras of our previous paper \cite{Arh23}. Moreover, we conjecture in Conjecture \ref{conj1} that a contractively complemented subspace of a noncommutative $\L^p$-space is isometric to the $\L^p$-space of a $\JW^*$-triple.






\paragraph{Approach of the paper} Suppose $1<p<\infty$. Consider a contractively decomposable projection $P \co \L^p(\cal{M}) \to \L^p(\cal{M})$. The first step of our approach consists in showing that we can suppose in \eqref{Matrice-2-2-Phi-intro} with $T=P$ that the maps $v_1$ and $v_2$ are (completely positive) contractive \textit{projections} $P_1$ and $P_2$ using ergodic theory (see Proposition \ref{prop-cd-proj-v1v2-proj}). As a second step, we will show that a non-symmetric two-sided change of density allows to reduce the analysis to the case $p=\infty$ by some lifting argument (see Theorem \ref{Lifting-decomposable}) relying on the lifting result of \cite{Arh20} achieved by a symmetric two-sided change of density (see Theorem \ref{Th-relevement-cp}). This lifting trick is of independent interest. Of course, a difficulty is the choice of suitable densities in order to perform the change of density. 
It should also be noted that the use of \textit{non-tracial} Haagerup noncommutative $\L^p$-spaces is essential in general for the case of \textit{tracial} noncommutative $\L^p$-spaces. 

\paragraph{Structure of the paper}
The paper is organized as follows. Section \ref{Haagerup-noncommutative} gives a brief presentation of Haagerup noncommutative $\L^p$-spaces, followed by some preliminary facts that are at the root of our results. Section \ref{Sec-prelim-TRO} contains background on TRO's, Jordan algebras, $\JBW^*$-triples which are necessary for our paper. In Section \ref{Sec-pseudo}, we investigate properties of $n$-pseudo-decomposable maps which are a slight generalization of decomposable maps. The proofs in this part are (essentially) similar to the proofs of \cite{ArK23}. In Section \ref{Sec-complementation}, we show that all rectangular $\L^p$-spaces associated with $\W^*$-ternary rings of operators arise as contractively decomposable complemented subspaces of noncommutative $\L^p$-spaces.  In Section \ref{Sec-lifting}, we present our new lifting result for ($n$-pseudo-)decomposable maps on noncommutative $\L^p$-spaces. Section \ref{Sec-Contractively-decomposable-projections} contains a proof of (our main result) Theorem \ref{main-th-ds-intro} which relies on our lifting result and ergodic theory. Indeed, we provide a more general statement for $n$-pseudo-decomposable maps which are defined in Section \ref{Sec-pseudo}. In Section \ref{Sec-Lp-JBW}, we introduce $\L^p$-spaces associated to $\sigma$-finite $\JBW^*$-triples with the help of complex interpolation. Our construction generalizes the one of nonassociative $\L^p$-spaces associated to $\sigma$-finite $\JBW^*$-algebras introduced in \cite{Arh23}. Finally, in Section \ref{Sec-questions}, we describe open questions raised by the contents of this paper.

\section{Preliminaries on Haagerup noncommutative $\L^p$-spaces and Banach spaces}
\label{Haagerup-noncommutative}

The readers are referred to the books \cite{EfR00}, \cite{Pau02}, \cite{Pis03} and \cite{Pis98} for details on operator spaces and completely bounded maps and to \cite{Kos14}, \cite{PiX03}, \cite{Ray03} and \cite{Ter81} for information on  noncommutative $\L^p$-spaces and references therein.

It is well-known that there are several equivalent constructions of noncommutative $\L^p$-spaces associated with a von Neumann algebra. In Section \ref{Sec-lifting} and Section \ref{Sec-Contractively-decomposable-projections}, we will use Haagerup noncommutative $\L^p$-spaces introduced in \cite{Haa79} and described more precisely in \cite{Ter81}. We denote by $s(x)$ the support of a positive operator $x$. If $\cal{M}$ is a von Neumann algebra equipped with a normal semifinite faithful trace, then the topological $*$-algebra of all (unbounded) $\tau$-measurable operators $x$ affiliated with $\cal{M}$ is denoted by $\L^0(\cal{M},\tau)$. 



In the sequel, we fix a normal semifinite faithful weight $\varphi$ on a von Neumann algebra $\cal{M}$ acting on a Hilbert space $H$. The one-parameter modular automorphisms group associated with $\varphi$ is denoted by $\sigma^\varphi=(\sigma_t^\varphi)_{t \in \R}$ \cite[p.~92]{Tak03}. 


For $1 \leq p <\infty$, the spaces $\L^p(\cal{M})$ are constructed as spaces of measurable operators with respect to some semifinite bigger von Neumann algebra, namely, the crossed product $\tilde{\cal{M}} \ov{\mathrm{def}}{=} \cal{M} \rtimes_{\sigma^\varphi} \R$ of $\cal{M}$ by the modular automorphisms group $\sigma^\varphi$, that is, the von Neumann subalgebra of $\B(\L^2(\R,H))$ generated by the operators $\pi(x)$ and $\lambda_s \ot \Id_H$, where $x \in \cal{M}$ and $s \in \R$, defined by
\begin{equation}
\label{Translations}
\big(\pi(x)\xi\big)(t) \ov{\mathrm{def}}{=} \sigma^{\varphi}_{-t}(x)(\xi(t))
\quad \text{and} \quad
(\lambda_s \ot \Id_H)(\xi(t)) \ov{\mathrm{def}}{=} \xi(t-s), \quad t \in \R, \ \xi \in \L^2(\R,H).
\end{equation} 
For any $s \in \R$, let $W(s)$ be the unitary operator on $\L^2(\R,H)$ defined by
\begin{equation}
\label{Def-W}
\big(W(s)\xi\big)(t)
\ov{\mathrm{def}}{=} \e^{-\i s t} \xi(t),\quad \xi \in \L^2(\R,H).
\end{equation}
The dual action $\widehat{\sigma} \co \R \to \B(\tilde{\cal{M}})$ on $\cal{M}$ \cite[p.~260]{Tak03} is given by
\begin{equation}
\label{Dual-action}
\widehat{\sigma}_s(x)
\ov{\mathrm{def}}{=} W(s)xW(s)^*,\quad x \in \tilde{\cal{M}},\ s \in \R.
\end{equation}
Then, by \cite[Lemma 3.6]{Haa78a} or \cite[p.~259]{Tak03}, $\pi(\cal{M})$ is the fixed subalgebra of $\tilde{\cal{M}}$ under the family of automorphisms $\widehat{\sigma}_s$:
\begin{equation}
\label{carac-Pi-de-M}
\pi(\cal{M})
=\big\{x \in \tilde{\cal{M}}\ : \ \widehat{\sigma}_s(x)=x \quad \text{for all }s \in \R\big\}.
\end{equation}

We identify $\cal{M}$ with the subalgebra $\pi(\cal{M})$ in the crossed product $\tilde{\cal{M}}$. If $\psi$ is a normal semifinite weight on $\cal{M}$, we denote by $\widehat{\psi}$ its Takesaki's dual weight on the crossed product $\cal{M}$. We can give the following definition of \cite{Haa78b} using the theory of operator valued weights. 
Indeed, Haagerup introduces an operator valued weight $T \co \cal{M}^+ \to \bar{\cal{M}}^+$ with values in the extended positive part\footnote{\thefootnote. If $\cal{M}=\L^\infty(\Omega)$ then $\bar{\cal{M}}^+$ identifies to the set of equivalence classes of measurable functions $\Omega \to [0,\infty]$.} $\bar{\cal{M}}^+$ of $\cal{M}$, formally defined by 
\begin{equation}
\label{Operator-valued}
T(x)
=\int_\R \widehat{\sigma}_s(x)\d s
\end{equation}
and shows that for a normal semifinite weight $\psi$ on $\cal{M}$, its dual weight is 
\begin{equation}
\label{Def-poids-dual}
\widehat{\psi} \ov{\mathrm{def}}{=} \bar \psi\circ T
\end{equation}
where $\bar\psi$ denotes the natural extension of the normal weight $\psi$ to $\bar{\cal{M}}^+$. This dual weight  satisfies the $\widehat{\sigma}$-invariance relation $\widehat{\psi} \circ \widehat{\sigma}=\widehat{\psi}$, see \cite[(10)]{Ter81}.

By \cite[p.~301]{Str81} \cite[Theorem 3.7]{Haa78a} \cite[Chap.~II, Lemma 1]{Ter81}, the map $\psi \to \widehat{\psi}$ is a bijection from the set of normal semifinite weights on $\cal{M}$ onto the set of normal semifinite $\widehat{\sigma}$-invariant weights on $\tilde{\cal{M}}$.

Recall that by \cite[Lemma 5.2 and Remark p.~343]{Haa78b} and \cite[Theorem 1.1 (c)]{Haa78b} the crossed product $\tilde{\cal{M}}$ is semifinite and there is a unique normal semifinite faithful trace $\tau=\tau_{\varphi}$ on $\tilde{\cal{M}}$ satisfying 
\begin{equation}
\label{trace-carac}
(\mathrm{D}\widehat{\varphi}:\mathrm{D} \tau)_t
=\lambda_t \ot \Id_H, \quad t \in \R
\end{equation}
where $(\mathrm{D}\widehat{\varphi}:\mathrm{D} \tau)_t$ denotes the Connes cocycle \cite[p.~48]{Str81} \cite[p.~111]{Tak03} of the dual weight $\widehat{\varphi}$ with respect to $\tau$. Moreover, $\tau$ satisfies the relative invariance $\tau \circ \widehat{\sigma}_s = \e^{-s}\tau$ for any $s \in \R$ by \cite[Lemma 5.2]{Haa78b}.

If $\psi$ is a normal semifinite weight on $\cal{M}$, we denote by $h_\psi$ the Pedersen-Takesaki derivative of the dual weight $\widehat{\psi}$ with respect to $\tau$ given by \cite[Theorem 4.10]{Str81}. By \cite[Corollary 4.8]{Str81}, note that the relation of $h_\psi$ with the Radon-Nikodym cocycle of $\widehat\psi$ is
\begin{equation}
\label{Radon-Nikodym-1}
(\D\widehat\psi:\mathrm{D}\tau)_t
= h_\psi^{\i t}, \quad t \in \R.
\end{equation}
If $\psi=\varphi$, we let $D_\varphi \ov{\mathrm{def}}{=} h_\varphi$ and we call it the density operator of $\varphi$.

By \cite[Chap. II, Prop. 4]{Ter81}, the mapping $\psi \to h_\psi$ gives a bijective correspondence between the set of all normal semifinite weights on $\cal{M}$ and the set of positive selfadjoint operators $h$ affiliated with $\tilde{\cal{M}}$ satisfying
\begin{equation}\label{eq:def-L1}
\widehat{\sigma}_s(h)
=\e^{-s}h, \quad s \in \R.
\end{equation}
Moreover, by \cite[Chap. II, Cor. 6]{Ter81}, $\omega$ belongs to $\cal{M}_*^+$ if and only if $h_\omega$ belongs to $\L^0(\tilde{\cal{M}},\tau)_+$. One may extend by linearity the map $\omega \mapsto h_\omega$ on $\cal{M}_*$. The Haagerup space $\L^1(\cal{M},\varphi)$ is defined as the set $\{h_\omega : \omega \in \cal{M}_*\}$, i.e. the range of the previous map. 

By \cite[Chap. II, Th.~7]{Ter81}, the mapping $\omega \mapsto h_\omega$, $\cal{M}_* \to \L^1(\cal{M},\varphi)$ is a linear order isomorphism which preserves the conjugation, the module, and the left and right actions of $\cal{M}$. Then $\L^1(\cal{M},\varphi)$ may be equipped with a continuous linear functional $\tr_\varphi \co \L^1(\cal{M}) \to \C$ defined by
\begin{equation}
\label{Def-tr}
\tr_\varphi(h_\omega)
\ov{\mathrm{def}}{=} \omega(1),\quad \omega \in \cal{M}_*
\end{equation}
\cite[Chap. II, Def. 13]{Ter81}. We also use the notation $\tr$ instead of $\tr_\varphi$.  A norm on $\L^1(\cal{M},\varphi)$ may be defined by $\norm{h}_1 \ov{\mathrm{def}}{=} \tr(|h|)$ for every $h \in \L^1(\cal{M},\varphi)$. By \cite[Chap. II, Prop. 15]{Ter81}, the map $\cal{M}_* \to \L^1(\cal{M},\varphi)$, $\omega \mapsto h_\omega$ is a surjective isometry.

More generally for $1 \leq p \leq \infty$, the Haagerup $\L^p$-space $\L^p(\cal{M},\varphi)$ associated with the normal faithful semifinite weight $\varphi$ is defined \cite[Chap. II, Def. 9]{Ter81} as the subset of the topological $*$-algebra $\L^0(\tilde{\cal{M}},\tau)$ of all (unbounded) $\tau$-measurable operators $x$ affiliated with $\cal{M}$ satisfying for any $s \in \R$ the condition
\begin{equation}
\label{Def-Haagerup}
\widehat{\sigma}_s(x)=\e^{-\frac{s}{p}}x \quad \text{if } p<\infty 
\quad \text{and} \quad \widehat{\sigma}_s(x)=x \quad \text{ if } p=\infty	
\end{equation}
where $\widehat{\sigma}_s \co \L^0(\cal{M},\tau) \to \L^0(\cal{M},\tau)$ is here the continuous $*$-automorphism obtained by a natural extension of the dual action \eqref{Dual-action} on $\cal{M}$. By \eqref{carac-Pi-de-M}, the space $\L^\infty(\cal{M},\varphi)$ coincides with $\pi(\cal{M})$ that we identify with $\cal{M}$. The spaces $\L^p(\cal{M},\varphi)$ are closed selfadjoint linear subspaces of $\L^0(\cal{M},\tau)$. They are closed under left and right multiplications by elements of $\cal{M}$. If $h=u|h|$ is the polar decomposition of $h \in \L^0(\tilde{\cal{M}},\tau)$ then by \cite[Chap. II, Prop. 12]{Ter81} we have
$$
h \in \L^p(\cal{M},\varphi)
\iff 
u \in \cal{M} \text{ and } |h| \in \L^p(\cal{M},\varphi).
$$

Suppose $1 \leq p<\infty$. By \cite[Chap.~II, Prop.~12]{Ter81} and its proof, for any $h \in \L^0(\tilde{\cal{M}},\tau)_+$, we have $h^p \in \L^0(\tilde{\cal{M}},\tau)_+$. Moreover, an element $h \in \L^0(\tilde{\cal{M}},\tau)$ belongs to $\L^p(\cal{M},\varphi)$ if and only if $|h|^p$ belongs to $\L^1(\cal{M},\varphi)$. A norm on $\L^p(\cal{M},\varphi)$ is then defined by the formula 
\begin{equation}
\label{Def-norm-Lp}
\norm{h}_p
\ov{\mathrm{def}}{=} (\tr |h|^p)^{\frac{1}{p}}
\end{equation} 
if $1 \leq p < \infty$ and by $\norm{h}_\infty \ov{\mathrm{def}}{=}\norm{h}_\cal{M}$, see \cite[Chap.~II, Def.~14]{Ter81}.

\paragraph{Case of a normal faithful positive linear form} If $\varphi$ is a normal faithful positive linear form on $\cal{M}$ then by \cite[(1.13)]{HJX10} the density operator $D_\varphi$ belongs to $\L^1(\cal{M},\varphi)$ and 
\begin{equation}
\label{varphi-trace}
\varphi(x)
=\tr_\varphi(D_\varphi x)
=\tr_\varphi(xD_\varphi), \quad x \in \cal{M}.
\end{equation}

\paragraph{Duality} Let $p, p^* \in [1,\infty]$ with $\frac{1}{p}+\frac{1}{p^*}=1$. By \cite[Chap.~II, Prop.~21]{Ter81}, for any $h \in \L^p(\cal{M},\varphi)$ and any $k \in \L^{p^*}(\cal{M},\varphi)$ the elements $hk$ and $kh$ belong to the space $\L^1(\cal{M},\varphi)$ and we have the tracial property $\tr(hk)=\tr(kh)$.

If $1 \leq p<\infty$, by \cite[Ch.~II, Th.~32]{Ter81} the bilinear form $\L^p(\cal{M},\varphi) \times \L^{p^*}(\cal{M},\varphi)\to \C$, $(h,k) \mapsto \tr(h k)$ defines a duality bracket between $\L^p(\cal{M},\varphi)$ and $\L^{p^*}(\cal{M},\varphi)$, for which $\L^{p^*}(\cal{M},\varphi)$ is isometrically the dual of $\L^p(\cal{M},\varphi)$. 


\paragraph{Change of weight} It is essentially proved in \cite[p.~59]{Ter81} and also \cite[Theorem 5.1]{Ray03}  
that $\L^p(\cal{M},\varphi)$ is independent of $\varphi$ up to an isometric isomorphism preserving the order and bimodular structure of $\L^p(\cal{M},\varphi)$, as well as the external products and Mazur maps. In fact given two normal semifinite faithful weights $\varphi_1, \varphi_2$ on $\cal{M}$ there is a $*$-isomorphism $\kappa \co \cal{M}_1 \to \cal{M}_2$ between the crossed products $\tilde{\cal{M}}_i\ov{\mathrm{def}}{=} \cal{M} \rtimes_{\sigma^\varphi_i} \R$ preserving $\cal{M}$, as well as the dual actions and the traces of $\tilde{\cal{M}}_1$ and $\tilde{\cal{M}}_2$, that is 
\begin{align}
\label{kappa}
\pi_2 = \kappa\circ \pi_1,\quad \widehat\sigma_2\circ\kappa = \kappa \circ \widehat\sigma_1 
\quad \text{and} \quad
\tau_2=\tau_1 \circ \kappa^{-1}.
\end{align}
Furthermore, $\kappa$ extends naturally to a topological $*$-isomorphism $\kappa \co \L^0(\tilde{\cal{M}}_1,\tau_1) \to \L^0(\tilde{\cal{M}}_2,\tau_2)$ between the algebras of measurable operators, which restricts to isometric $*$-isomorphisms between the noncommutative $\L^p$-spaces $\L^p(\cal{M},\varphi_1)$ and $\L^p(\cal{M},\varphi_2)$, preserving the $\cal{M}$-bimodule structures. Finally $\kappa \co \L^1(\cal{M},\varphi_1) \to \L^1(\cal{M},\varphi_2)$ preserves the traces:
\begin{equation}
\label{Trace-preserving}
\tr_{\varphi_1}
=\tr_{\varphi_2} \circ \kappa.
\end{equation}
Since $\kappa$ preserves the $p$-powers operations, i.e. $\kappa(h^p)=(\kappa(h))^p$ for any $h \in \L^0(\tilde{\cal{M}})$, it induces an isometry from $\L^p(\cal{M},\varphi_1)$ onto $\L^p(\cal{M},\varphi_2)$. It is not hard to see that this isometry is a completely order isomorphism.
 
This independence allows us to consider $\L^p(\cal{M},\varphi)$ as a particular realization of an abstract space $\L^p(\cal{M})$.


\paragraph{Centralizer of a normal faithful positive linear form} Recall that the centralizer \cite[p.~38]{Str81} of a  normal faithful positive linear form is the von Neumann subalgebra $\cal{M}^\varphi \ov{\mathrm{def}}{=} \{x \in \cal{M}: \sigma_t^\varphi(x)=x\text{ for all } t \in \R\}$. If $x \in \cal{M}$, we have by \cite[(2) p.~39]{Str81}
\begin{equation}
\label{Charac-centralizer}
x \in \cal{M}^\varphi
\iff \varphi(xy)=\varphi(yx) \text{ for any } y \in \cal{M}. 
\end{equation}

\paragraph{Reduced noncommutative $\L^p$-spaces} 


Let $\varphi$ be a normal faithful positive linear form. If the orthogonal projection $e$ \textit{belongs} to the centralizer of $\varphi$, we can consider the restriction $\varphi_e$ of $\varphi$ on $e\cal{M}e$. It is well-known that we can identify the Banach space $\L^p(e\cal{M}e,\varphi_e)$ with the subspace $e\L^p(\cal{M},\varphi)e$ of $\L^p(\cal{M},\varphi)$, see \cite[p.~508]{Wat88}. 
Moreover, we have the following result. 

\begin{lemma}
\label{Lemme-trace coincide}
The Haagerup trace $\tr_\varphi$ restricts to $\tr_{\varphi_e}$  on $\L^1(e\cal{M}e)$. 
\end{lemma}



Let $e$ be an orthogonal projection of $\cal{M}$. Let us construct a normal faithful positive linear form with centralizer containing $e$. 
Consider two normal faithful positive linear forms $\varphi_1$ and $\varphi_2$ on $e\cal{M}e$ and $e^\perp \cal{M} e^\perp$. By \cite[p.~155]{RaX03}, we can define a normal faithful positive linear form $\phi$ on $\cal{M}$ by 
\begin{equation}
\label{Extension-weight1}
\phi(x)
\ov{\mathrm{def}}{=} \varphi_1(exe)+\varphi_2(e^\perp x e^\perp), \quad x \in \cal{M}_+.
\end{equation}
Moreover, $e$ belongs to the centralizer of $\phi$ by \eqref{Charac-centralizer} and we have $\phi_e=\varphi_1$.


%
%

\paragraph{Matrix order and norms} We refer to \cite[Lemma A.2]{SkV19} and \cite[p.~155]{Hia21} for more information on the identification between $S^p_n(\L^p(\cal{M}))$ and $\L^p(\M_n(\cal{M}))$. The next lemma is a easy generalization of \cite[Lemma 2.13]{ArK23} (for semifinite von Neumann algebras) left to the reader using reduction theory \cite{HJX10}.

\begin{lemma}
\label{Lemma-Matricial-inequality}
Let $\cal{M}$ be a von Neumann algebra. Suppose $1 \leq p \leq \infty$. Let $a,b$ and $c$ be elements of $\L^p(\cal{M})$ such that the element
$
\begin{bmatrix}
a & b\\
b^* & c
\end{bmatrix}
$
of $S^p_2(\L^p(\cal{M}))$ is positive. Then we have $\norm{b}_{\L^p(\cal{M})} \leq \sqrt{\norm{a}_{\L^p(\cal{M})} \norm{c}_{\L^p(\cal{M})}}$.
\end{lemma}

The following is \cite[Lemma 2.5]{Han09}. 

\begin{lemma}
\label{Lemma-Han}
Suppose $\cal{M}$ is a von Neumann algebra equipped with a faithful semifinite normal weight $\varphi$. If $x$ is an element of the Haagerup noncommutative $\L^p$-space $\L^p(\cal{M})$, then we have 
$$
\begin{bmatrix} 
|x^*|& x \\ 
x^* & |x|
\end{bmatrix} 
\in \L^p(\M_2(\cal{M}))_+. 
$$
\end{lemma}

\paragraph{Extension of maps on noncommutative $\L^p$-spaces}
Let $\cal{N}$ be a von Neumann algebra equipped with a normal faithful linear form $\psi$. Consider a unital positive map $T \co \cal{N} \to \cal{N}$ such that $\psi \circ T =\psi$. Given $1 \leq p < \infty$, the map
\begin{equation}
\label{Map-extension-Lp}
\begin{array}{cccc}
T_p  \co &  D_\psi^{\frac{1}{2p}} \cal{N} h_\psi^{\frac{1}{2p}}  &  \longrightarrow   &  \L^p(\cal{N}) \\
           &   D_\psi^{\frac{1}{2p}}x D_\psi^{\frac{1}{2p}}  & \longmapsto &  D_\psi^{\frac{1}{2p}}T(x) D_\psi^{\frac{1}{2p}}  \\
\end{array}
\end{equation}
extends to a contractive map $T_p$ from $\L^p(\cal{N})$ into $\L^p(\cal{N})$. See \cite[Remark 5.6]{HJX10}.

The following is a folklore observation, see \cite[Lemma 2.2]{ArR19} for a proof.

\begin{lemma}
\label{lemma2-GL}
Let $\cal{M}$ be a von Neumann algebra and  $1 \leq p <\infty$. Let $h$ be a positive element of $\L^p(\cal{M})$.
\begin{enumerate}
	\item The map $s(h)\cal{M}s(h) \to \L^p(\cal{M})$, $x \mapsto h^{\frac{1}{2}} x h^{\frac{1}{2}}$ is injective.
	\item Suppose $1 \leq p <\infty$. The subspace $h^{\frac{1}{2}} \cal{M} h^{\frac{1}{2}}$ is dense in $s(h)\L^p(\cal{M})s(h)$ for the topology of $\L^p(\cal{M})$.
\end{enumerate}
\end{lemma}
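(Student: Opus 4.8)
The plan is to derive both statements from the support behaviour of $h^{1/2}$ recorded in \eqref{support-properties}, together with the $\L^p$--$\L^{p^*}$ duality and the tracial property $\tr(hk)=\tr(kh)$. Throughout write $e\ov{\mathrm{def}}{=}s(h)$, and note that since $h \in \L^p(M)_+$ the root $h^{\frac12}$ lies in $\L^{2p}(M)_+$ (because $|h^{\frac12}|^{2p}=h^p=|h|^p\in\L^1(M)$) with $s_l(h^{\frac12})=s_r(h^{\frac12})=s(h^{\frac12})=e$, and $h^{\frac12}=eh^{\frac12}=h^{\frac12}e$. In particular $h^{\frac12}xh^{\frac12}=h^{\frac12}(exe)h^{\frac12}$, so the map in (1) indeed takes values in $e\L^p(M)e$ and it suffices to treat $x \in eMe$.

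For (1), suppose $h^{\frac12}xh^{\frac12}=0$ with $x=exe$. Applying \eqref{support-properties} to $a=h^{\frac12}$ and $b=xh^{\frac12}$ gives $s_r(h^{\frac12})\,xh^{\frac12}=exh^{\frac12}=0$, whence $xh^{\frac12}=0$ because $ex=x$. Applying \eqref{support-properties} once more, now to $a=x$ and $b=h^{\frac12}$, yields $x\,s_l(h^{\frac12})=xe=0$, and since $xe=x$ this forces $x=0$. This settles the injectivity, and the entire structural content of (1) is just two invocations of \eqref{support-properties}.

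For (2), first observe that $h^{\frac12}Mh^{\frac12}\subseteq e\L^p(M)e$ and that $e\L^p(M)e$ is a closed subspace of $\L^p(M)$, being the range of the bounded idempotent $y\mapsto eye$. By Hahn--Banach it is therefore enough to show that every functional on $\L^p(M)$ vanishing on $h^{\frac12}Mh^{\frac12}$ also vanishes on $e\L^p(M)e$. Such a functional is represented by some $k \in \L^{p^*}(M)$ via $y\mapsto\tr(yk)$, and the hypothesis reads $\tr(h^{\frac12}xh^{\frac12}k)=0$ for every $x\in M$. Cycling $h^{\frac12}$ by the tracial property rewrites this as $\tr\big(x\,h^{\frac12}kh^{\frac12}\big)=0$ for all $x\in M$, where a Hölder count ($\tfrac{1}{2p}+\tfrac{1}{p^*}+\tfrac{1}{2p}=\tfrac1p+\tfrac{1}{p^*}=1$) shows $h^{\frac12}kh^{\frac12}\in\L^1(M)$. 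Since an element of $\L^1(M)$ pairing to $0$ against all of $M$ corresponds to the zero functional in $M_*$ and hence is $0$, we conclude $h^{\frac12}kh^{\frac12}=0$.

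Now the same support argument as in (1), applied with $k$ in place of $x$, gives $eke=0$: from $h^{\frac12}(kh^{\frac12})=0$ we get $ekh^{\frac12}=0$ via $s_r(h^{\frac12})=e$, and then from $(ek)h^{\frac12}=0$ we get $ek\,s_l(h^{\frac12})=eke=0$. Finally, for any $y\in e\L^p(M)e$ we have $\tr(yk)=\tr(y\,eke)=0$, so the functional indeed vanishes on $e\L^p(M)e$, which yields the asserted density. I expect no serious obstacle here: the only point requiring genuine care is the bookkeeping of Hölder exponents so that every product lands in the claimed $\L^r(M)$ and the trace manipulations are legitimate, while all the geometric content is carried by \eqref{support-properties}.
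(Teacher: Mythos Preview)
Your proof is correct. The paper does not actually supply a proof of this lemma: it is stated without argument (the label \texttt{lemma2-GL} points to the Goldstein--Labuschagne reference \cite{GoL1}, and the surrounding text treats it as a known preliminary fact). So there is nothing to compare your argument against on the paper's side.

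Your approach is the natural one and all steps go through. In part~(1) the two applications of \eqref{support-properties} are clean. In part~(2) the only places worth a second look are: (i) the trace cycling $\tr(h^{\frac12}xh^{\frac12}k)=\tr(xh^{\frac12}kh^{\frac12})$, which is legitimate because $h^{\frac12}\in\L^{2p}(M)$ and $xh^{\frac12}k\in\L^{(2p)^*}(M)$ (your H\"older bookkeeping); and (ii) the final line $\tr(yk)=\tr(y\,eke)$ for $y\in e\L^p(M)e$, which follows from $y=ey=ye$ together with one more application of the tracial property to move $e$ across. Both are fine, as is the use of $M$ separating points of $\L^1(M)\cong M_*$ to deduce $h^{\frac12}kh^{\frac12}=0$.
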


\paragraph{Normalized duality mappings} Recall that a normed linear space $X$ is said to be strictly convex (or rotund) if for any $x,y \in X$ the equalities $\frac{\norm{x+y}_X}{2}=\norm{x}_X=\norm{y}_X$ imply $x=y$.

Let $X$ be a Banach space. For each $x \in X$, we can associate \cite[Definition 2.12]{Pat18} the subset
\begin{equation}
\label{Def-JX}
J_X(x) 
\ov{\mathrm{def}}{=} \big\{x^* \in X^*\ : \ \langle x, x^* \rangle_{X,X^*}= \norm{x}_X^2 = \norm{x^*}_{X^*}^2\big\}
\end{equation}
of the dual $X^*$. 

The multivalued operator $J_X \co X \to X^*$ is called the normalized duality mapping of $X$. From the Hahn-Banach theorem, for every $x \in X$, there exists $y^* \in X^*$ with $\norm{y^*}_{X^*} = 1$ such that $\langle x, y^*\rangle_{X,X^*} = \norm{x}_X$. Using $x^*=\norm{x}_Xy^*$, we conclude that $J_X(x)\not= \emptyset$ for each $x \in X$. If the dual space $X^*$ is strictly convex, $J_X$ is single-valued. 

When $X$ is a reflexive strictly convex Banach space with a strictly convex dual space $X^*$, $J_X$ is a single-valued bijective map and its inverse $J_X^{-1} \co X^* \to X^{**}=X$ is equal to $J_{X^*} \co X^* \to X$. 


If the Banach space $X$ is a noncommutative $\L^p$-space, we have the following explicit description of the normalized duality mapping, see \cite{ArR19} for a proof.

\begin{lemma}
\label{Lemma-J-lp}
Suppose $1<p<\infty$. If $h$ belongs to $\L^p(\cal{M})$ with polar decomposition $h=u|h|$ then we have
\begin{align}
\label{comput-J}
J_{\L^p(\cal{M})}(h)
=\norm{h}^{2-p}_{p}|h|^{p-1}u^*.
\end{align}
\end{lemma}

The next crucial result is proved in \cite{Arh20}.

\begin{lemma}
\label{Lemma-smooth}
Let $X$ be a smooth strictly convex reflexive Banach space. Let $P \co X \to X$ be a contractive projection and $x$ be an element of $X$. Then $x$ belongs to $\Ran P$ if and only if $J_X(x)$ belongs to $\Ran P^*$.
\end{lemma}

\paragraph{Projections and interpolation} 

We start by recalling some background on complex interpolation theory. We refer to the books \cite{BeL76}, \cite{KPS82} and \cite{Lun18} for more information. Let $Y_0,Y_1$ be two Banach spaces which embed into a topological vector space space $\tilde{Y}$. We say that $(Y_0,Y_1)$ is an interpolation couple. Then the sum $Y_0 + Y_1 \ov{\mathrm{def}}{=} \{y \in \tilde{Y} : y=y_1+y_2 \text{ for some } y_1 \in Y_1,y_2 \in Y_2\}$ is well-defined and equipped with the norm
$$
\norm{y}_{Y_0+Y_1}
\ov{\mathrm{def}}{=} \inf_{y=y_0+y_1} \big(\norm{y_0}_{Y_0}+\norm{y_1}_{Y_1}\big).
$$
The intersection $Y_0 \cap Y_1$ is equipped with the norm 
\begin{equation}
\label{norm-intersection}
\norm{y}_{Y_0 \cap Y_1}
\ov{\mathrm{def}}{=} \max\{\norm{y}_{Y_0},\norm{y}_{Y_1}\}, \quad y \in Y_0 \cap Y_1.
\end{equation}
Consider the closed strip $\ovl{S} \ov{\mathrm{def}}{=} \{z \in \mathbb{C} : 0 \leq \Re z \leq 1\}$. Let us denote by $\mathscr{F}(Y_0,Y_1)$ the family of bounded continuous functions $f \co \ovl{S} \to Y_0+Y_1$, holomorphic on the open strip $S=\{z \in \mathbb{C} : 0< \Re z <1\}$ inducing continuous functions $ \R\to Y_0$, $t \mapsto f(\i t)$ and $\R \to Y_1$, $t \mapsto f(1+\i t)$ which tend to 0 when $|t|$ goes to $\infty$. For any $f \in \mathscr{F}(Y_0,Y_1)$, we set
$$
\norm{f}_{\mathscr{F}(Y_0,Y_1)}
\ov{\mathrm{def}}{=} \max \left\{\sup_{t \in \R} \norm{f(\i t)}_{Y_0}, \sup_{t \in \R} \norm{f(1+\i t)}_{Y_1}\right\}.
$$
If $0 \leq \theta \leq 1$, we define the subspace $
(Y_0,Y_1)_\theta
\ov{\mathrm{def}}{=} \big\{f(\theta) : f \in \mathscr{F}(Y_0,Y_1)\big\}$ of the Banach space $Y_0+Y_1$. For any $y \in (Y_0,Y_1)_\theta$, we let
$$
\norm{y}_{(Y_0,Y_1)_\theta}
\ov{\mathrm{def}}{=} \inf \big\{\norm{f}_{\mathscr{F}(Y_0,Y_1)} : f \in \mathscr{F}(Y_0,Y_1), f(\theta)=y \big\}.
$$
Then by \cite[Theorem 4.1.2]{BeL76} $(Y_0,Y_1)_\theta$ equipped with this norm is a Banach space.

We will use the following well-known result. See e.g. \cite[Lemma 4.8]{ArK23} for a slightly more general statement.

\begin{lemma}
\label{Lemma-interpolation}
Let $(Y_0,Y_1)$ be an interpolation couple and let $C$ be a contractively complemented subspace of $Y_0+Y_1$. We assume that the corresponding contractive projection $P \co Y_0+Y_1 \to Y_0+Y_1$ satisfies $P(Y_i) \subset Y_i$ and that the restriction $P \co Y_i \to Y_i$ is contractive for $i=0,1$. Then $(Y_0 \cap C,Y_1 \cap C)$ is an interpolation couple and the canonical inclusion $J \co C \to Y_0+Y_1$ induces an isometric isomorphism $\tilde{J}$ from $(Y_0 \cap C,Y_1 \cap C)_\theta$ onto the subspace $P((Y_0,Y_1)_\theta)=(Y_0,Y_1)_\theta \cap C$ of $(Y_0,Y_1)_\theta$. 
\end{lemma}

\paragraph{Strictly monotone ordered Banach spaces} An ordered Banach space $X$ (or its norm) is said to be monotone if $0 \leq x \leq y$ implies $\norm{x} \leq \norm{y}$ for any $x,y \in X$. A monotone ordered Banach space $X$ is said to be strictly monotone if $0 \leq x \leq y$ and $x \not=y$ implies $\norm{x}_X < \norm{y}_X$. We will use the following crucial lemma of \cite{Arh20}. For the sake of completeness, we give the following short argument. 

\begin{lemma}
\label{Lemma-monotone}
Let $X$ be a strictly monotone ordered Banach space. Let $T \co X \to X$ be a contraction. Consider $x,y \in X$ such that $0 \leq x \leq y$ with $T(y)=y$ and $T(x)=0$. Then $x=0$.
\end{lemma}

\begin{proof}
We have
$$
y
=T(y)
=T(y)-T(x)
=T(y-x).
$$ 
Since $T$ is contractive, we deduce that $\norm{y}_X =\norm{T(y)-T(x)}_X= \norm{T(y-x)}_X \leq \norm{y-x}_X$. Since $0 \leq y-x \leq y$ we infer that $\norm{y}_X=\norm{y-x}_X$ and finally $x=0$ by strict monotonicity of the norm.
\end{proof}

\section{Preliminaries on TRO's, Jordan algebras and $\JBW^*$-triples}
\label{Sec-prelim-TRO}


We start by providing a little overview of the theory of ternary rings of operators.

\paragraph{Ternary rings of operators} A ternary ring of operators (or simply $\TRO$) is a norm closed subspace $V$ of the space $\B(H,K)$ for some Hilbert spaces $H$ and $K$ which is closed under the triple product $(x,y,z) \mapsto xy^*z$, see e.g. \cite[4.4.1 p.~161]{BLM04}. A TRO $V$ is called a $\W^*$-$\TRO$ if it is weak* closed in the dual Banach space $\B(H,K)$. A sub-TRO of a TRO $V$ is a closed subspace $W$ of $V$ satisfying $W W^*W \subset W$. We refer to the references given in Section \ref{Introduction} for more information on TROs. 

\begin{example} \normalfont
\label{typical-TRO}
A basic example of $\W^*$-$\TRO$ is given by $e\cal{M}f$ where $e,f$ are orthogonal projections of a von Neumann algebra $\cal{M}$.
\end{example}

\paragraph{Operator space structures of TROs}
We also note that every TRO admits an operator space structure. Indeed, let us assume that $V$ is a TRO contained in $\B(H,K)$. Then for each $n \in \mathbb{N}$, the matrix space $\M_n(V)$ can be identified with a TRO contained in $\M_n(\B(H,K)) = \B(H^n,K^n)$. This provides a canonical operator space matrix norm on $V$ such that each $\M_n(V)$ is again a TRO. By \cite[Proposition 2.1]{KaR02}, the TRO-matrix norms are uniquely determined on each TRO and does not depend on the choice of the representing Hilbert spaces.

\paragraph{$\TRO$-homomorphisms} A $\TRO$-homomorphism (or triple morphism) is a linear map $T \co V \to W$ between two $\TRO$s respecting the ternary product: 
$$
T(xy^*z) 
= T(x)T(y)^*T(z), \quad x,y,z \in V.
$$
If in addition, $T$ is an injection from $V$ onto $W$, we call $T$ a $\TRO$-isomorphism from $V$ onto $W$. By \cite[Proposition 2.4]{EOR01}, a TRO-isomorphism between $\W^*$-TROs is necessarily weak* continuous. By  \cite[Proposition 2.1]{EOR01}, every TRO-homomorphism is completely contractive, and every injective $\TRO$-homomorphism is completely isometric. Finally, if $T \co V \to W$ is a linear map between TRO's then by \cite[Proposition 2.1]{Ham99} (see also \cite[Corollary 4.4.6 p.~163]{BLM04}) $T$ is a surjective complete isometry if and only if $T$ is a surjective 2-isometry if and only if $T$ is a TRO-isomorphism.

\begin{example} \normalfont
\label{E-finite-dim-TRO}
Every finite-dimensional $\TRO$ $V$ is completely isometric (i.e. triple isomorphic) to a finite direct sum of rectangular matrix algebras, i.e. it has the form
$$
V 
= \M_{m_1,n_1} \oplus_\infty \cdots \oplus_\infty \M_{m_k,n_k}, 
$$
see \cite{EOR01}, \cite[Corollary A.2]{Kan13} and \cite{Smi00}.
\end{example}


\paragraph{Linking algebra} If $\cal{M}$ is a von Neumann algebra and $e$ is an orthogonal projection in $\cal{M}$, then $e\cal{M}(1- e)$ is a $\W^*$-TRO by Example \ref{typical-TRO}. Conversely, if the subspace $V$ of $\B(H,K)$ is a $\W^*$-TRO, then we can consider the subspace $V^* \ov{\mathrm{def}}{=} \{x^* : x \in V \}$ of $\B(K,H)$, the von Neumann subalgebras $M(V) \ov{\mathrm{def}}{=} \ovl{VV^*}^{\w^*}$, $N(V)\ov{\mathrm{def}}{=} \ovl{V^*V}^{\w^*}$ of $\B(K)$ and $\B(H)$ and the von Neumann algebra
\begin{equation}
\label{Linking-algebra}
R(V)
\ov{\mathrm{def}}{=} \begin{bmatrix}
  M(V)   &  V \\
  V^*   &  N(V) \\
\end{bmatrix}
\subset \B(K \oplus H)
\end{equation}
which is called the linking von Neumann algebra of $V$, see \cite[p.~846]{Rua04}. Then there exists a TRO-isomorphism $V = eR(V)e^\perp$, where $e 
\ov{\mathrm{def}}{=} \begin{bmatrix}
    \Id_K & 0  \\
    0 & 0  \\
\end{bmatrix}$ 
and 
$e^\perp
\ov{\mathrm{def}}{=} \begin{bmatrix}
   0  &  0 \\
   0  &  \Id_H \\
\end{bmatrix}$.

\paragraph{Completely contractive projections on TROs} 
We will use the following result \cite[Theorem 4.4.9 p.~165]{BLM04} (it is essentialy a slight generalization of a result of Youngson \cite{You83}).

\begin{thm}
\label{Th-youngson}
Let $V$ be a $\TRO$ and $P \co V \to V$ be a completely contractive projection. For any $x, y, z \in V$, we have  
$$
P\big(P(x)P(y)^*P(z)\big) 
= P(xP(y)^*P(z)) 
= P(P(x)y^*P(z)) 
= P(P(x)P(y)^*z).
$$
Furthermore, the range $\Ran(P)$ of $P$ is a $\TRO$ with triple product $(x,y,z) \mapsto P(xy^*z)$.
\end{thm}


Let $W$ be a sub-TRO of a TRO $V$. A projection $P \co V \to V$ on a sub-TRO $W$ is a TRO-conditional expectation on $W$ \cite[p.~499]{EOR01} if we have
\begin{equation}
\label{TRO-cond-exp}
P(zx^*y) 
= P(z)x^*y,\quad 
P(xz^*y) 
= xP(z)^*y, \quad 
P(xy^*z) 
= xy^*P(z), \quad z \in V, x, y \in W.
\end{equation}

\begin{remark} \normalfont
\label{Rem-TRO-cond}
Let $P \co V \to V$ be a projection from $V$ onto a sub-TRO $W$. Then by \cite[Theorem 2.5]{EOR01} (see also \cite[Corollary 4.4.10 p.~166]{BLM04}) the following properties are equivalent:
\begin{enumerate}
\item $P$ is completely contractive,
\item $P$ is contractive,
\item $P$ is a contractive conditional expectation.
\end{enumerate}
\end{remark}


Now, we give a brief presentation of $\JBW^*$-algebras, since these algebras provide examples of $\JBW^*$-triples (see Example \ref{Ex-JBW-algebra-triple}) and since the $\L^p$-spaces introduced in Section \ref{Sec-Lp-JBW} generalize the nonassociative $\L^p$-spaces of $\JBW^*$-algebras defined in the paper \cite{Arh23}.

\paragraph{Various Jordan algebras} A Jordan algebra $A$ over a field $\K$ is a vector space $A$ over $\K$ equipped with a commutative bilinear product that satisfies $(x^2 \circ y)\circ x=x^2\circ (y \circ x)$ for any $x,y \in A$. 
A Jordan algebra $A$ over $\R$ is called formally real \cite[p.~69]{HOS84} if for any $x_1,\ldots,x_n\in A$ the relation $x_1^2+\cdots +x_n^2=0$ implies $x_1=\cdots=x_n=0$. Following \cite[Definition 1.5 p.~5]{AlS03}, a $\JB$-algebra is a Jordan algebra over $\R$ with identity element 1 equipped with a complete norm satisfying the properties $\norm{x \circ y} \leq \norm{x} \norm{y}$, $\norm{x^2}=\norm{x}^2$, $\norm{x^2} \leq \norm{x^2+y^2}$ for any $x,y \in A$. A $\JBW$-algebra is a $\JB$-algebra which is a dual Banach space \cite[p.~111]{HOS84}. In this case, the predual is unique. Recall that a $\JBW$-algebra is always unital by \cite[Lemma 4.1.7]{HOS84}.

\begin{example} \normalfont
If $\O$ is the algebra of octonions, then the space 
$$
\mathrm{H}_3(\O)
=\left\{\begin{bmatrix}
   a  & \alpha & \beta  \\
   \ovl{\alpha}  & b & \gamma	\\
    \ovl{\beta} & \ovl{\gamma} &  c  \\
\end{bmatrix}: \alpha,\beta,\gamma \in \O, a,b,c \in \R \right\}
$$ 
of hermitian 3x3 matrices with entries in $\O$ equipped with the product $(x,y) \mapsto x \circ y=\frac{1}{2}(xy+yx)$ 
is a unital formally real Jordan algebra by \cite[Proposition 2.9.2 p.~69]{HOS84} of dimension 27. By \cite[Corollary 3.1.7 p.~77]{HOS84} and its proof, we can equip $\mathrm{H}_3(\O)$ with a norm that makes it a $\JB$-algebra. With this structure, $\mathrm{H}_3(\O)$ is a $\JBW$-algebra.
\end{example}

A $\JB^*$-algebra \cite[p.~91]{HOS84} \cite[Definition 3.3.1]{CGRP14} is a complex Banach space $A$ which is a complex Jordan algebra equipped with an involution satisfying 
\begin{equation}
\label{def-JBstar}
\norm{x \circ y} \leq \norm{x} \norm{y},\quad  \norm{x^*}=\norm{x} 
\quad \text{and} \quad \norm{\{x,x^*,x\}}=\norm{x}^3
\end{equation}
for any $x,y \in A$, where we use the Jordan triple product 
$$
\{x,y,z\}
\ov{\mathrm{def}}{=} (x \circ y) \circ z+(y \circ z)\circ x-(x \circ z) \circ y.
$$
A $\JBW^*$-algebra \cite[p.~4]{CGRP18} is a $\JB^*$-algebra which is a dual Banach space. For the links between $\JBW^*$-algebras and $\JBW$-algebras, we refer to \cite[pp.~4-5]{BHK17} and \cite[Corollary 5.1.29 p.~9 and Corollary 5.1.41 p.~15]{CGRP18}. In short, the $\JBW$-algebras are exactly the selfadjoint parts of $\JBW^*$-algebras. 

\begin{example} \normalfont
\label{Ex-von-Neumann-algebra}
A von Neumann algebra $\cal{M}$ equipped with the Jordan product
\begin{equation}
\label{Jordan-product}
x \circ y 
\ov{\mathrm{def}}{=} \frac{1}{2}(xy+yx), \quad x,y \in \cal{M} 
\end{equation}
is a $\JBW^*$-algebra.
\end{example}

\paragraph{$\JB^*$-triples and $\JBW^*$-triples} 
We recall that an element $a$ in a Banach algebra $A$ is called hermitian \cite[p.~156]{Chu12} if $\norm{\exp \i ta} = 1$ for all $t \in \R$.

A $\JB^*$-triple, e.g. \cite[p.~130]{CGRP14} \cite[Definition 2.5.25 p.~164]{Chu12}, is a complex Banach space $X$ equipped with a continuous triple product $X \times X \times X \to X$, $(x,y,z) \mapsto \{x,y,z\}$ such that
\begin{enumerate}
\item $\{x,y,z\}$ is bilinear, symmetric in $x$ and $z$ and conjugate linear in $y$,

\item $\{a,b,\{x,y,z\}\} = \{\{a,b,x\},y,z\} - \{x,\{b,a,y\},z\} + \{x,y,\{a,b,z\}\}$ for any $x,y,z,a,b \in X$,

\item the left multiplication $D(x,x) \co X \to X$, $y \mapsto \{x,x,y\}$ is Hermitian in the algebra $\B(X)$ of bounded operators on $X$ and has positive spectrum,

\item for any $x \in X$, we have $\norm{\{x,x,x\}}_X = \norm{x}_X^3$.
\end{enumerate}
The last axiom is a Jordan analogue of the Gelfand-Naimark axiom of $\mathrm{C}^*$-algebras. Moreover, if $X$ is a dual Banach space, then it is called a $\JBW^*$-triple \cite[Definition 2.5.30]{Chu12} \cite[p.~528]{CGRP14}. In this case, the predual is unique by \cite[Theorem 5.7.38 p.~233]{CGRP18}. 
We refer to the books \cite{Chu12}, \cite{CGRP14}, \cite{CGRP18}, \cite{Isi19} and \cite{Upm85} and to the important papers \cite{Hor87a}, \cite{Hor87b} and \cite{HoN88} for more information on $\JB^*$-triples and $\JBW^*$-triples.

\begin{example} \normalfont
\label{Ex-JBW-algebra-triple}
A $\JBW^*$-algebra $\cal{M}$ admits a structure of $\JBW^*$-triple with triple product 
\begin{equation}
\label{triple-prod-JBWstar}
(x,y,z) \mapsto \{x,y,z\} 
\ov{\mathrm{def}}{=} (x \circ y^*) \circ z + (z \circ y^*) \circ x - (x \circ z) \circ y^*.
\end{equation}
See \cite[Lemma 3.1.6 p.~174]{Chu12} and \cite[p.~224]{CGRP18}. 
\end{example}

\paragraph{Triple homomorphisms} A linear map $T \co X \to Y$ between two $\JB^*$-triples $X$ and $Y$ is called a triple homomorphism if it preserves the triple product:
$$
T(\{x,y,z\}) 
= \{T(x), T (y), T(z)\}, \quad x,y, z \in X.
$$
A bijective triple homomorphism is called a triple isomorphism. In $\JB^*$-triples, the metric structure and the algebraic structure determine each other. Indeed, by \cite[Theorem 3.1.7 p.~175]{Chu12} and \cite[Theorem 3.1.20 p.~183]{Chu12} a linear bijection between two $\JB^*$-triples is an isometry if and only if it is a triple-isomorphism. In this case, this bijection is necessarily weak* continuous, see \cite[Corollary 5.7.39 p.~234]{CGRP18}.

\paragraph{$\JC^*$-triples and $\JW^*$-triples} A $\JC^*$-triple (or $\mathrm{J}^*$-algebra) \cite[Definition 2.5.34 p.~169]{Chu12} \cite[p.~440]{CGRP18} \cite[Definition 22.7.1 p.~460]{Isi19} is a norm closed subspace of $\B(H,K)$ which is also closed under the map $x \mapsto xx^*x$. By a polarization argument, any $\JC^*$-triple is closed under the triple product $(x,y,z) \mapsto \{x,y,z\}\ov{\mathrm{def}}{=}\frac{1}{2}(xy^*z+zy^*x)$. A $\JC^*$-triple is a $\JB^*$-triple. Finally, a $\JC^*$-triple is called a $\JW^*$-triple if it is a dual Banach space. Consequently, a $\JW^*$-triple is a $\JBW^*$-triple. We shall often say that a $\JBW^*$-triple is a $\JW^*$-triple if it is isometrically isomorphic to a $\JW^*$-triple.

\begin{example} \normalfont
A $\W^*$-$\TRO$ is a $\JW^*$-triple with triple product $(x,y,z) \mapsto \{x,y,z\} \ov{\mathrm{def}}{=} xy^*z$.
\end{example}

\begin{example} \normalfont
By \cite[Proposition 6.1.41 p.~362]{CGRP18}, the complexification $\mathrm{H}_3(\O)_\C=\mathrm{H}_3(\O_\C)$ of the $\JBW$-factor $\mathrm{H}_3(\O)$ is equipped with a structure of $\JBW^*$-factor. The underlying $\JBW^*$-triple is called a Cartan factor of type VI.
\end{example}

\begin{example} \normalfont
By \cite[p.~140]{Isi19}, the space $\Asym_n \ov{\mathrm{def}}{=} \{x \in \M_n : x^t=-x \}$ of skew-symmetric complex matrices of $\M_n$ is a $\JBW^*$-triple (called a Cartan factor of type $\III_n$). If $n$ is even, it is even equipped with a structure of reversible $\JW^*$-algebra by \cite[Proposition 25.2.2 p.~513]{Isi19}.
\end{example}

\paragraph{Projections on $\JBW^*$-triples} We will use the following, which is \cite[Theorem 5.6.59 p.~199]{CGRP18} (see also \cite[Theorem 14.4.1 p.~252]{Isi19} and \cite[Theorem 3.3.1 p.~202]{Chu12}). This is a fundamental result of Kaup \cite{Kau84} and Stach\'o \cite{Sta82}. The last sentence is from \cite{FrB84} and \cite{FrB85}. See also \cite{ChE77}, \cite{EfS79} and \cite{RoY82} for particular cases.

\begin{thm}
\label{Th-proj-JBW}
If $P \co X \to X$ is a weak* continuous contractive projection on a $\JBW^*$-triple $X$, then the range $P(X)$ is a $\JBW^*$-triple with the triple product given by $\{x,y,z\}_P \ov{\mathrm{def}}{=} P(\{x,y,z\})$ where $x,y,z \in P(X)$. Moreover, we have
$$
P\big\{P(x), y,P(z)\big\} 
= P\big\{P(x),P(y),P(z)\big\},\quad x,y,z \in X.
$$
Finally, if $X$ is a $\JW^*$-triple then $P(X)$ is a $\JW^*$-triple.
\end{thm}
In general, the range $P(X)$ is not a subtriple of $X$ (see \cite[Example 1 p.~66]{FrB82} or \cite[Example 3 p.~99]{Kau84}). But note that if $P(X)$ is known to be a subtriple then the triple product $\{\cdot,\cdot,\cdot\}_{P}$ coincides with the original triple product of $X$ because in $\JB^*$-triples norm and triple product determine each other (see e.g. \cite[Theorem 3.1.7 p.~175 and Theorem 3.1.20 p.~183]{Chu12}). 

\paragraph{Tripotents and Peirce projections} An element $u$ in a $\JBW^*$-triple $X$ satisfying $\{u,u,u\} = u$ is called a tripotent. When $X$ is a $\JW^*$-triple, these elements are precisely the partial isometries of $X$. With a tripotent $u$ and $0 \leq i \leq 2$, we can introduce the Peirce projections $P_i(u) \co X \to X$ with range $X_i(u)$. For any $x \in X$, we have
\begin{equation}
\label{Peirce-1}
P_2(u)(x) 
\ov{\mathrm{def}}{=} \{u, \{u, x, u\},u\}, \quad
P_1(u) \ov{\mathrm{def}}{=} 2\big(D(u, u) - P_2(u)\big) 
\end{equation}
and 
\begin{equation}
\label{Peirce-2}
P_0(u)+ P_1(u)+ P_2(u) 
= \Id_{X}.
\end{equation}
These maps are contractive linear projections by \cite[Corollary 1.2]{FrB85b}. A crucial property of $\JBW^*$-triples is that for a tripotent $u$ of $X$ the Peirce-2 subspace $X_2(u)$ is a $\JBW^*$-algebra with product $(a,b) \mapsto a \circ b \ov{\mathrm{def}}{=} \{a,u,b\}$, involution $a^* \ov{\mathrm{def}}{=} \{u, a, u\}$ where $a,b \in X_2(u)$, and unit $u$.

\begin{example} \normalfont
\label{Pierce-example}
In the case where $X$ is a $\JBW^*$-algebra and $e \in X$ is a projection, i.e. a selfadjoint idempotent, the Peirce projections are given by the following expressions (essentially \cite[p.~48]{HOS84})
\begin{equation}
\label{Pierce-example-fo}
P_2(e)x=\{e, x^*, e\}, \quad P_1(e)x= 2\{e,x^*,1-e\}, \quad P_0(e)x=\{1-e,x^*,1-e\},
\end{equation}
where $x \in X$.
\end{example}

A tripotent $u$ in a $\JBW^*$-triple $X$ is called complete if $X_0(u) = \{0\}$ \cite[p.~16]{HKPP20} \cite[p.~517]{CGRP14} and unitary if $X = X_2(u)$. 

\paragraph{Orthogonality and order} Two tripotents $u$ and $v$ are said orthogonal if $v$ belongs to $X_0(u)$. This relation is symmetric on the set of tripotents. For two tripotents $u$ and $v$, we write $u \leq v$ if $v-u$ is a tripotent orthogonal to $u$. The relation $\leq$ is a partial order on the set of tripotents by \cite[Theorem 2.1]{EdR98a}.

\begin{example} \normalfont
\label{order-algebras}
Let $\cal{M}$ be a $\JBW^*$-algebra. Each projection (i.e. selfadjoint idempotent) $p$ of $\cal{M}$ is a tripotent. Moreover, for any projections $e$, $f$ in $\cal{M}$, we have $e \leq f$ if and only if $e\circ f =e$ and $e \perp f$ if and only if $e \circ f = 0$.
\end{example}

We have the following characterization of triple order (\cite[Proposition 6.9]{BHKPP18}).

\begin{prop}
\label{Lemma-smaller}
Let $X$ be a $\JBW^*$-triple and let $u,v$ be two tripotents of $X$. Then the following assertions are equivalent:
\begin{enumerate}
\item $u \leq v$,
\item $u= \{ u,v,u\}$,
\item $u$ is a projection in the $\JBW^*$-algebra $X_2(v)$.
\end{enumerate}
\end{prop}

\section{$n$-pseudo-decomposable maps on noncommutative $\L^p$-spaces}
\label{Sec-pseudo}

In this section, we define and give several properties of $n$-pseudo-decomposable maps on Haagerup noncommutative $\L^p$-spaces. The proofs are (essentially) similar to the ones of \cite{ArK23} provided for results on decomposable maps on tracial noncommutative $\L^p$-spaces. The author hesitated to choose the terminology <<$n$-decomposable operator>>. Since there are already two different notions for the name <<decomposable operator>> (the one of  \cite{Haa85} \cite{JuR04} \cite{ArK23} and the one of \cite[Definition 1.2.8 p.~7]{Sto13}), we chose <<$n$-pseudo-decomposable>>.

\begin{defi}
\label{Def-pseudo-decomposable}
Suppose $1 \leq p \leq \infty$. Let $\cal{M}$ and $\cal{N}$ be von Neumann algebras. Consider some $n \in \{1,2,\ldots,\infty\}$. A linear map $T \co \L^p(\cal{M}) \to \L^p(\cal{N})$ is $n$-pseudo-decomposable if there exist some bounded linear maps $v_1,v_2 \co \L^p(\cal{M}) \to \L^p(\cal{N})$ such that the linear map
\begin{equation}
\label{Matrice-2-2-Phi}
\Phi
\ov{\mathrm{def}}{=}\begin{bmatrix}
   v_1  &  T \\
   T^\circ  &  v_2  \\
\end{bmatrix}
\co S^p_2(\L^p(\cal{M})) \to S^p_2(\L^p(\cal{N})), \quad \begin{bmatrix}
   a  &  b \\
   c &  d  \\
\end{bmatrix}\mapsto 
\begin{bmatrix}
   v_1(a)  &  T(b) \\
   T^\circ(c)  &  v_2(d)  \\
\end{bmatrix}
\end{equation}
is $n$-positive, that means that $\Id_{S^p_n} \ot \Phi \co S^p_n(S^p_2(\L^p(\cal{M}))) \to S^p_n(S^p_2(\L^p(\cal{N})))$ is a positive map, where $T^\circ(c) \ov{\mathrm{def}}{=} T(c^*)^*$. In this case, we let
\begin{equation}
\label{Norm-dec}
\norm{T}_{n-\pdec,\L^p(\cal{M}) \to \L^p(\cal{N})}
\ov{\mathrm{def}}{=}\inf \max\{\norm{v_1},\norm{v_2}\}
\end{equation}
where the infimum is taken over all maps $v_1$ and $v_2$. We say that $T$ is contractively $n$-pseudo-decomposable if $\norm{T}_{n-\pdec,\L^p(\cal{M}) \to \L^p(\cal{N})} \leq 1$. 
\end{defi}
If $n \in \{1,2,\ldots,\infty\}$, we denote by $\PDec_n(\L^p(\cal{M}),\L^p(\cal{N}))$ the set of $n$-pseudo-decomposable operators between two noncommutative $\L^p$-spaces. 

If $n=\infty$, we recover the decomposable maps of \cite{JuR04} and of the memoir \cite{ArK23} (and the decomposable maps of \cite{Haa78b} if in addition $p=\infty$) and we have $\norm{T}_{\infty-\pdec,\L^p(\cal{M}) \to \L^p(\cal{N})}=\norm{T}_{\dec,\L^p(\cal{M}) \to \L^p(\cal{N})}$. By \cite[Theorem 3.23]{ArK23}, if the von Neumann algebras $\cal{M}$ and $\cal{N}$ are approximately finite-dimensional and equipped with normal semifinite faithful traces, the decomposable norm $\norm{\cdot}_{\dec,\L^p(\cal{M}) \to \L^p(\cal{N})}$ and the regular norm $\norm{\cdot}_{\reg,\L^p(\cal{M}) \to \L^p(\cal{N})}$ of \cite{Pis95} are identical. We will show that the infimum \eqref{Norm-dec} is a minimum (see Proposition \ref{Prop-dec-inf-atteint}). Note that in the conditions of Definition \ref{Def-pseudo-decomposable} the maps $v_1$ and $v_2$ are $n$-positive.

Let $\cal{M}_1$, $\cal{M}_2$ and $\cal{M}_3$ be von Neumann algebras. Suppose $1 \leq p \leq \infty$. Let $T_1 \co \L^p(\cal{M}_1) \to \L^p(\cal{M}_2)$ and $T_2 \co \L^p(\cal{M}_2) \to \L^p(\cal{M}_3)$ be some $n$-pseudo-decomposable maps. It is easy to see
that the composition $T_2 \circ T_1$ is $n$-pseudo-decomposable and that 
\begin{equation}
\label{Composition-dec}
\norm{T_2 \circ T_1}_{n-\pdec} \leq \norm{T_2}_{n-\pdec} \norm{T_1}_{n-\pdec}.
\end{equation}

Recall that for any matrix $\alpha \in \M_{n,m}$, the map  
\begin{equation}
\label{conj-cp}
\L^p(\M_n(\cal{M})) \to \L^p(\M_m(\cal{M})), \quad x \mapsto \alpha^* x \alpha
\end{equation}
is completely positive. 
\begin{prop}
\label{prop-dec-homogeneous}
Let $\cal{M}$ and $\cal{N}$ be von Neumann algebras. Suppose $1 \leq p\leq \infty$. Consider some $n \in \{1,2,\ldots,\infty\}$. If $\lambda \in \C$ and if $T \co \L^p(\cal{M}) \to \L^p(\cal{N})$ is $n$-pseudodecomposable then the map $\lambda T$ is $n$-pseudodecomposable and $\norm{\lambda T}_{n-\pdec,\L^p(\cal{M}) \to \L^p(\cal{N})}=|\lambda|\, \norm{T}_{n-\pdec,\L^p(\cal{M}) \to \L^p(\cal{N})}$.
\end{prop}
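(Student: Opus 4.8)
The plan is to derive both the $n$-pseudo-decomposability of $\lambda T$ and the norm identity from two operations that visibly preserve $n$-positivity: composing the matrix map on the input side with a conjugation by a scalar diagonal unitary (which only \emph{rotates} the off-diagonal phases), and scaling the whole map by the nonnegative number $|\lambda|$. First I would dispose of the case $\lambda=0$: then $\lambda T=0$, and taking $v_1=v_2=0$ makes $\Phi=0$, which is $n$-positive, so $\norm{0}_{n-\pdec,\L^p(M) \to \L^p(N)}\leq 0$; since $T$ being $n$-pseudo-decomposable has finite norm, the claimed identity $\norm{0 \cdot T}=|0|\,\norm{T}$ holds. So assume $\lambda\neq 0$ and write $\lambda=|\lambda|\e^{\i\theta}$. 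I would also record the elementary identity $(\lambda T)^\circ=\bar\lambda\,T^\circ$, immediate from $(\lambda T)^\circ(c)=\big((\lambda T)(c^*)\big)^*=\bar\lambda\,T(c^*)^*=\bar\lambda\,T^\circ(c)$.

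Let $u\ov{\mathrm{def}}{=}\mathrm{diag}(1,\e^{-\i\theta})$ be the associated diagonal unitary, and let $R_u\co S^p_2(\L^p(M))\to S^p_2(\L^p(M))$, $x\mapsto uxu^*$, be the corresponding conjugation, after the identification of $S^p_2(\L^p(M))$ with $\L^p(\M_2(M))$. A direct computation shows that for $x=\begin{bmatrix} a & b \\ c & d\end{bmatrix}$ the element $uxu^*$ has the same diagonal entries and off-diagonal entries $\e^{\i\theta}b$ and $\e^{-\i\theta}c$. Hence, if $v_1,v_2$ are maps witnessing that $T$ is $n$-pseudo-decomposable, so that $\Phi=\begin{bmatrix} v_1 & T \\ T^\circ & v_2\end{bmatrix}$ is $n$-positive, then by linearity of $T$ and $T^\circ$,
\[
\Phi\circ R_u
=\begin{bmatrix} v_1 & \e^{\i\theta}T \\ \e^{-\i\theta}T^\circ & v_2\end{bmatrix}
\quad\text{so that}\quad
|\lambda|\,(\Phi\circ R_u)
=\begin{bmatrix} |\lambda|v_1 & \lambda T \\ (\lambda T)^\circ & |\lambda|v_2\end{bmatrix}.
\]

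The map $R_u$ is completely positive (conjugation by the unitary $1\ot u$ preserves positivity at every matrix amplification level), hence in particular $n$-positive; therefore $\mathrm{Id}_{S^p_n}\ot(\Phi\circ R_u)=(\mathrm{Id}_{S^p_n}\ot\Phi)\circ(\mathrm{Id}_{S^p_n}\ot R_u)$ is a composition of positive maps, hence positive, and multiplying by the scalar $|\lambda|\geq 0$ keeps it positive. Thus the right-hand matrix is $n$-positive, exhibiting $|\lambda|v_1$ and $|\lambda|v_2$ as witnesses for $\lambda T$ in the sense of Definition \ref{Def-pseudo-decomposable}. Consequently $\lambda T$ is $n$-pseudo-decomposable and $\norm{\lambda T}_{n-\pdec,\L^p(M) \to \L^p(N)}\leq\max\{\norm{|\lambda|v_1},\norm{|\lambda|v_2}\}=|\lambda|\max\{\norm{v_1},\norm{v_2}\}$; taking the infimum over all admissible $v_1,v_2$ gives $\norm{\lambda T}\leq|\lambda|\,\norm{T}$.

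For the reverse inequality I would apply the inequality just obtained to the scalar $\lambda^{-1}$ and the map $\lambda T$, yielding $\norm{T}=\norm{\lambda^{-1}(\lambda T)}\leq|\lambda|^{-1}\norm{\lambda T}$, i.e. $|\lambda|\,\norm{T}\leq\norm{\lambda T}$; combined with the previous bound this gives equality, and the argument is uniform in $p\in[1,\infty]$. The main point requiring care is the verification that $R_u$ is genuinely completely positive on $S^p_2(\L^p(M))$ and that it transforms the $(2,1)$-entry into exactly $(\lambda T)^\circ=\bar\lambda\,T^\circ$ after the $|\lambda|$-scaling, so that the resulting matrix is precisely of the prescribed form \eqref{Matrice-2-2-Phi}; once this is settled, the positivity bookkeeping (stability of $n$-positivity under composition with a completely positive map and under multiplication by a nonnegative scalar) is routine.
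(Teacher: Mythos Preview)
Your proof is correct and follows essentially the same strategy as the paper: write $\lambda$ in polar form, use conjugation by a scalar diagonal unitary to rotate the off-diagonal phases of the matrix map $\Phi$, scale by $|\lambda|$, pass to the infimum, and obtain the reverse inequality by applying the result to $\lambda^{-1}$. The only cosmetic differences are that the paper applies the unitary conjugation on the output side (i.e.\ $u^*\Phi(\cdot)u$) rather than on the input side via $\Phi\circ R_u$, and that you handle the case $\lambda=0$ explicitly.
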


\begin{proof}
By symmetry, it suffices to prove $\norm{\lambda T}_{n-\pdec} \leq |\lambda| \norm{T}_{n-\pdec}$, since then $\norm{T}_{n-\pdec} = \bnorm{\frac{1}{\lambda} \lambda T}_{n-\pdec} \leq \frac{1}{|\lambda|} \norm{\lambda T}_{n-\pdec}$. We can write $\lambda=|\lambda| \theta$ where $\theta$ is a complex number such that $|\theta|=1$. Assume that $v_1,v_2 \co \L^p(\cal{M}) \to \L^p(\cal{N})$ are linear maps such that the map 
$\begin{bmatrix} 
v_1 & T \\ 
T^\circ & v_2
\end{bmatrix} 
\co S^p_2(\L^p(\cal{M})) \to S^p_2(\L^p(\cal{N}))$ is $n$-positive. By \eqref{conj-cp}, the linear map 
$$
\begin{bmatrix} 
1 & 0 \\ 
0 & \theta 
\end{bmatrix} ^* 
\begin{bmatrix} 
v_1(\cdot) & T(\cdot) \\ 
T^\circ(\cdot) & v_2(\cdot) 
\end{bmatrix} 
\begin{bmatrix} 
1 & 0 \\ 
0 & \theta 
\end{bmatrix} 
$$ 
is also $n$-positive on $S^p_2(\L^p(\cal{M}))$ by composition of $n$-positive maps. But it is not difficult to check that the latter operator equals 
$\begin{bmatrix} 
v_1 & \theta T \\ 
\ovl{\theta}T^\circ & v_2
\end{bmatrix} $. Consequently the map $|\lambda| 
\cdot 
\begin{bmatrix} 
v_1 & \theta T \\ 
\ovl{\theta}T^\circ & v_2
\end{bmatrix} 
=\begin{bmatrix} 
|\lambda| v_1 & \lambda T \\ 
(\lambda T)^\circ & |\lambda| v_2
\end{bmatrix}$ is also $n$-positive. We deduce that $\lambda T$ is $n$-pseudodecomposable and that $\norm{\lambda T}_{n-\pdec} \leq \max\big\{ \norm{|\lambda| v_1}, \norm{|\lambda| v_2} \big\} = |\lambda| \max\big\{ \norm{v_1}, \norm{v_2} \big\}$. Passing to the infimum yields the desired inequality $\norm{\lambda T}_{n-\pdec} \leq |\lambda| \norm{T}_{n-\pdec}$.
\end{proof}

\begin{prop}
\label{Prop-dec-inf-atteint}
Let $\cal{M}$ and $\cal{N}$ be two von Neumann algebras. Suppose $1 < p < \infty$. Consider some $n \in \{1,2,\ldots,\infty\}$. Let $T \co \L^p(\cal{M}) \to \L^p(\cal{N})$ be an $n$-pseudo-decomposable map. Then the infimum in the definition of $\norm{T}_{n-\pdec}$ is actually a minimum i.e. we can choose $v_1$ and $v_2$ in (\ref{Norm-dec}) such that 
$$
\norm{T}_{n-\pdec,\L^p(M) \to \L^p(N)}
=\max\{\norm{v_1},\norm{v_2}\}.
$$ 
\end{prop}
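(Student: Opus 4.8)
The plan is to produce the minimizer by a weak* compactness argument on the set of admissible pairs $(v_1,v_2)$, combined with weak* lower semicontinuity of the norm and weak* closedness of the positive cone. Write $D\ov{\mathrm{def}}{=}\norm{T}_{n-\pdec,\L^p(M)\to\L^p(N)}$ and choose a sequence of pairs $(v_1^{(k)},v_2^{(k)})$ of linear maps $\L^p(M)\to\L^p(N)$ for which the associated matrix map $\Phi^{(k)}=\begin{bmatrix} v_1^{(k)} & T \\ T^\circ & v_2^{(k)}\end{bmatrix}$ is $n$-positive and $\max\{\norm{v_1^{(k)}},\norm{v_2^{(k)}}\}\to D$; in particular $\norm{v_i^{(k)}}\leq D+1$ for large $k$.

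The first step is to set up the compactness. For $1<p\leq\infty$ the space $\L^p(N)$ is a dual Banach space, namely $\L^p(N)=\big(\L^{p^*}(N)\big)^*$ for $1<p<\infty$ and $\L^\infty(N)=N=(N_*)^*$; hence $\B(\L^p(M),\L^p(N))$ is itself a dual space, isometric to $\big(\L^p(M)\otpb \L^{p^*}(N)\big)^*$ (resp. $(\L^1(M)\otpb N_*)^*$), so by Banach--Alaoglu its closed balls are compact for the weak* topology, which here coincides with the topology of pointwise weak* convergence. (For $p=1$ one argues identically after replacing each bounded map $\L^1(M)\to\L^1(N)$ by its weak* continuous adjoint $N\to M$, which carries $n$-positivity to $n$-positivity.) I would therefore pass to a subnet along which $v_i^{(k)}\to v_i$ pointwise weak*, for some bounded maps $v_1,v_2\co\L^p(M)\to\L^p(N)$, and weak* lower semicontinuity of the norm immediately yields $\norm{v_i}\leq D$.

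The second, and crucial, step is to verify that the limit map $\Phi\ov{\mathrm{def}}{=}\begin{bmatrix} v_1 & T \\ T^\circ & v_2\end{bmatrix}$ is still $n$-positive, so that $(v_1,v_2)$ is admissible. Since only the diagonal entries vary while $T$ is fixed, $\Phi^{(k)}(y)\to\Phi(y)$ weak* for every $y\in S^p_2(\L^p(M))$. For a fixed finite $k\leq n$ and a positive $X\in S^p_k(S^p_2(\L^p(M)))$, the element $(\Id_{S^p_k}\ot\Phi)(X)$ is the entrywise weak* limit of the positive elements $(\Id_{S^p_k}\ot\Phi^{(j)})(X)$; because $S^p_k$ is finite-dimensional, entrywise weak* convergence is weak* convergence in $S^p_k(S^p_2(\L^p(N)))$, and as the positive cone there is weak* closed we conclude $(\Id_{S^p_k}\ot\Phi)(X)\geq 0$. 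Thus $\Phi$ is $k$-positive for every finite $k\leq n$. For finite $n$ this is exactly the required $n$-positivity; for $n=\infty$ it upgrades to $\infty$-positivity, since any positive element of $S^p(S^p_2(\L^p(M)))$ is the $\L^p$-norm limit of its finite compressions $(P_k\ot\Id)X(P_k\ot\Id)$, which are positive and supported on a finite corner.

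Finally, $(v_1,v_2)$ is an admissible pair with $\max\{\norm{v_1},\norm{v_2}\}\leq D$; as $D$ is the infimum defining $\norm{T}_{n-\pdec}$, equality must hold and the infimum is attained. I expect the main obstacle to be this second step: ensuring that $n$-positivity survives the passage to the limit, and in particular treating $n=\infty$, where the tensor factor $S^p_\infty$ is infinite-dimensional and one must reduce to finite corners before invoking weak* closedness of the positive cone; by contrast, the compactness in the first step and the lower semicontinuity of the norm are routine once the correct dual realization of $\L^p(N)$ is fixed.
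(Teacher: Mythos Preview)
Your proof is correct and follows essentially the same route as the paper's: extract a limit of an optimizing sequence $(v_1^{(k)},v_2^{(k)})$ by a compactness argument, then use that $n$-positivity is stable under pointwise weak limits together with lower semicontinuity of the norm (the paper phrases the $1<p<\infty$ case via reflexivity and the weak operator topology, citing \cite[Lemma 2.8]{ArK} for the positivity step, and defers $p=\infty$ to \cite{Haa3} and $p=1$ to duality, whereas you handle $1<p\leq\infty$ uniformly through the dual realization of $\B(\L^p(M),\L^p(N))$ and spell out the reduction to finite corners for $n=\infty$). One small slip: for $p=\infty$ the predual of $\B(M,N)$ is $M\otpb N_*$, not $\L^1(M)\otpb N_*$.
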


\begin{proof}
For any integer $n$, let $v_k, w_k \co \L^p(\cal{M}) \to \L^p(\cal{N})$ be bounded maps such that the map $
\begin{bmatrix} 
v_k & T \\ 
T^\circ & w_k
\end{bmatrix} 
\co S^p_2(\L^p(\cal{M})) \to S^p_2(\L^p(N))
$ 
is $n$-positive with $\max\{\norm{v_k},\norm{w_k}\} \leq \norm{T}_{n-\pdec} + \frac{1}{k}$. Note that since the Banach space $\L^p(\cal{N})$ is reflexive, the closed unit ball of the space $\B(\L^p(\cal{M}),\L^p(\cal{N}))$ of bounded operators in the weak operator topology is compact by \cite[Exercise 3.7 (iii) p.~122]{All11}. Hence the bounded sequences $(v_{k})$ and $(w_k)$ admit convergent subnets $(v_{\alpha})$ and $(w_{\alpha})$ in the weak operator topology which converge to some $v,w \in \B(\L^p(\cal{M}),\L^p(\cal{N}))$. Now, it is easy to see that 
$ 
\begin{bmatrix} 
v & T \\ 
T^\circ & w 
\end{bmatrix} 
= \lim_\alpha 
\begin{bmatrix} 
v_{\alpha} & T \\ 
T^\circ & w_{\alpha}
\end{bmatrix}
$ 
in the weak operator topology of $\B(S_2^p(\L^p(\cal{M})),S_2^p(\L^p(\cal{N})))$. By \cite[Lemma 2.8]{ArK23}, the operator on the left hand side is $n$-positive as a weak limit of $n$-positive operators. Moreover, using the weak lower semicontinuity of the norm, we see that $\norm{v} \leq \liminf_\alpha \norm{v_{\alpha}} \leq \norm{T}_{n-\pdec}$ and $\norm{w} \leq \liminf_\alpha \norm{w_{\alpha}} \leq \norm{T}_{n-\pdec}$. Hence, we have $\max\{\norm{v},\norm{w}\}=\norm{T}_{n-\pdec}$. 
\end{proof}

\begin{prop}
\label{Prop-norm-dec}
Let $\cal{M}$ and $\cal{N}$ be two von Neumann algebras. Suppose $1 \leq p \leq \infty$. Consider some $n \in \{1,2,\ldots,\infty\}$. Then the set $\PDec_n(\L^p(\cal{M}),\L^p(\cal{N}))$ of $n$-pseudo-decomposable operators is a vector space and $\norm{\cdot}_{n-\pdec,\L^p(\cal{M}) \to \L^p(\cal{N})}$ is a norm on $\PDec_n(\L^p(\cal{M}),\L^p(\cal{N}))$.
\end{prop}

\begin{proof}
Let $T_1,T_2 \co \L^p(\cal{M}) \to \L^p(\cal{N})$ be $n$-pseudodecomposable maps. There exist some bounded linear maps $v_1,v_2,w_1,w_2 \co \L^p(\cal{M}) \to \L^p(\cal{N})$ such that $\begin{bmatrix} 
   v_1  &  T_1 \\
   T_1^\circ  &  v_2  \\
\end{bmatrix}$ and $\begin{bmatrix} 
    w_1  &   T_2 \\
    T_2^\circ  &  w_2 \\
\end{bmatrix}$ are $n$-positive. We can write 
$
\begin{bmatrix} 
   v_1  &  T_1 \\
   T_1^\circ  &  v_2  \\
\end{bmatrix}
+
\begin{bmatrix} 
    w_1  &   T_2 \\
    T_2^\circ  &  w_2 \\
\end{bmatrix}
=
\begin{bmatrix} 
   v_1+ w_1  &  T_1+T_2 \\
   T_1^\circ + T_2^\circ  &   v_2 + w_2  \\
\end{bmatrix}
=
\begin{bmatrix} 
   v_1+w_1              &  T_1 + T_2 \\
   (T_1+T_2)^\circ  &   v_2+w_2  \\
\end{bmatrix}
$. 
Moreover, this map is $n$-positive. Hence $T_1+T_2$ is $n$-pseudodecomposable. Furthermore, we deduce that
\begin{align*}
\MoveEqLeft
  \norm{T_1+T_2}_{n-\pdec}  
		\leq \max\big\{\norm{v_1+w_1},\norm{v_2+w_2}\big\} \\
		&\leq \max\big\{\norm{v_1}+\norm{w_1}, \norm{v_2}+\norm{w_2}\big\}
		\leq \max\big\{\norm{v_1},\norm{v_2}\big\}+ \max\big\{\norm{w_1},\norm{w_2}\big\}.
\end{align*}
Passing to the infimum, we conclude that the sum $T_1+T_2$ is $n$-pseudo-decomposable and that $\label{prop-dec-triangle-inequality} \norm{T_1+T_2}_{n-\pdec} \leq \norm{T_1}_{n-\pdec}+\norm{T_2}_{n-\pdec}$. The absolute homogeneity is Proposition \ref{prop-dec-homogeneous}. Suppose $\norm{T}_{n-\pdec}=0$. By Proposition \ref{Prop-dec-inf-atteint}, the map $
\begin{bmatrix} 
0 & T \\ 
T^\circ & 0
\end{bmatrix} 
\co S^p_2(\L^p(\cal{M})) \to S^p_2(\L^p(\cal{N}))
$ is $n$-positive. Now, let $x \in \L^p(\cal{M})$. By Lemma \ref{Lemma-Han}, the element
$
\begin{bmatrix} 
|x^*|& x \\ 
x^* & |x|
\end{bmatrix} 
$
of $S^p_{2}(\L^p(\cal{M}))$ is positive. We deduce that the element $
\begin{bmatrix}
0   & T(x)\\
T(x)^* & 0
\end{bmatrix}$ 
is also positive. Using Lemma \ref{Lemma-Matricial-inequality}, we infer that $T(x)=0$. We conclude that $T=0$.
\end{proof}

Now, we give an example.

\begin{prop}
\label{quest-cp-versus-dec1}
Let $\cal{M}$ and $\cal{N}$ be two von Neumann algebras. Suppose $1 \leq p \leq \infty$. Consider some $n \in \{2,\ldots,\infty\}$. Let $T \co \L^p(\cal{M}) \to \L^p(\cal{N})$ be an $n$-positive map. Then $T$ is $\left \lfloor{\frac{n}{2}}\right \rfloor$-pseudo-decomposable and 
$$
\norm{T}_{n-\pdec,\L^p(\cal{M}) \to \L^p(\cal{N})} 
\leq \norm{T}_{\L^p(\cal{M}) \to \L^p(\cal{N})}.
$$ 
\end{prop}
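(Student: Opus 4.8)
The plan is to make the most economical choice $v_1=v_2=T$ in the block map \eqref{Matrice-2-2-Phi} and to notice that, for a positive map, the twisted corner $T^\circ$ coincides with $T$, so that $\Phi$ is nothing but an amplification of $T$. First I would record that a positive linear map is automatically selfadjoint: since $T$ is $n$-positive with $n\geq 1$ it is in particular positive, hence sends $\L^p(M)_{\sa}$ into $\L^p(N)_{\sa}$; decomposing an arbitrary $x\in\L^p(M)$ as $x=x_1+\i x_2$ with $x_1,x_2$ selfadjoint then gives $T(x^*)=T(x)^*$. Applying this with $x=c^*$ yields $T^\circ(c)=T(c^*)^*=T(c)$ for every $c\in\L^p(M)$, that is $T^\circ=T$.

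With this in hand I would set $v_1=v_2=T$, so that the map $\Phi$ of \eqref{Matrice-2-2-Phi} becomes $\begin{bmatrix} T & T \\ T & T\end{bmatrix}$, which is exactly the amplification $\Id_{S^p_2}\ot T$. Through the canonical order isomorphism $S^p_m(S^p_2(\L^p(M)))=S^p_{2m}(\L^p(M))$ the map $\Id_{S^p_m}\ot\Phi$ is then identified with $\Id_{S^p_{2m}}\ot T$. Choosing $m=\lfloor n/2\rfloor$ we have $2m\leq n$; and $n$-positivity descends to $k$-positivity for every $k\leq n$, since $S^p_k(\L^p(M))$ is the top-left corner $e\bigl(S^p_n(\L^p(M))\bigr)e$ and compression by $e$ preserves positivity and commutes with $\Id\ot T$. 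Hence $\Id_{S^p_{2m}}\ot T$ is positive, so $\Phi$ is $\lfloor n/2\rfloor$-positive. By Definition \ref{Def-pseudo-decomposable} this says precisely that $T$ is $\lfloor n/2\rfloor$-pseudo-decomposable, and reading off \eqref{Norm-dec} gives $\norm{T}_{\lfloor n/2\rfloor-\pdec,\L^p(M)\to\L^p(N)}\leq\max\{\norm{v_1},\norm{v_2}\}=\norm{T}$.

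There is no serious obstacle: the entire content is the identity $T^\circ=T$, which erases the only feature separating $\Phi$ from a plain amplification of $T$, together with the bookkeeping of the factor $2$ coming from $S^p_m(S^p_2)=S^p_{2m}$ (this is the source of the floor $\lfloor n/2\rfloor$ rather than $n$). The argument is uniform in $p\in[1,\infty]$, since each ingredient — positive implies selfadjoint, the identification $S^p_m(S^p_2)=S^p_{2m}$, and stability of $n$-positivity under corner compression — is valid for every $p$, so no separate treatment of the endpoints $p=1$ and $p=\infty$ is required. The statement is of course only meaningful for $n\geq 2$, so that $\lfloor n/2\rfloor\geq 1$, and I would note that the displayed norm subscript should read $\lfloor n/2\rfloor$ in accordance with the first assertion.
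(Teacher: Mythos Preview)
Your proposal is correct and follows the same strategy as the paper: take $v_1=v_2=T$, use that positivity forces $T^\circ=T$, and identify the resulting block map with $\Id_{S^p_2}\ot T$. The paper's proof simply cites \cite[Lemma~2.9]{ArK} to assert that $\begin{bmatrix} T & T \\ T & T\end{bmatrix}$ is $n$-positive and concludes that $T$ is $n$-pseudo-decomposable; your version actually carries out the identification $\Id_{S^p_m}\ot\Phi\cong\Id_{S^p_{2m}}\ot T$ and thereby recovers the floor $\lfloor n/2\rfloor$ stated in the proposition (and correctly flags the subscript mismatch in the displayed inequality). In that sense your argument is the same idea, written out in more detail and with the level of positivity tracked precisely.
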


\begin{proof}
We see that the map $\begin{bmatrix} 
T & T \\ 
T & T
\end{bmatrix} 
\co S^p_2(\L^p(\cal{M})) \to S^p_2(\L^p(\cal{N}))$ is $\left \lfloor{\frac{n}{2}}\right \rfloor$-positive. We infer that the map $T$ is $\left \lfloor{\frac{n}{2}}\right \rfloor$-pseudo-decomposable and that the inequality is true.
\end{proof}

\begin{prop} 
Let $\cal{M}$ and $\cal{N}$ be two von Neumann algebras. Suppose $1 \leq p \leq \infty$. Consider some $n \in \{1,\ldots,\infty\}$. An $n$-pseudo-decomposable $T \co \L^p(\cal{M}) \to \L^p(\cal{N})$ is a linear combination of $n$-positive maps.
\end{prop}

\begin{proof}
Suppose that the map $T$ is $n$-pseudo-decomposable. There exist some linear maps $v_1,v_2 \co \L^p(\cal{M}) \to \L^p(\cal{N})$ such that
$\Phi=\begin{bmatrix}
v_1 & T \\ 
T^{\circ} & v_2
\end{bmatrix}$ is $n$-positive. By \eqref{conj-cp}, the maps 
$
T_1=\frac{1}{4}\begin{bmatrix}
1 & 1 \\ 
\end{bmatrix}
\Phi
\begin{bmatrix}
1  \\
1\\ 
\end{bmatrix}
$,
$
T_2=\frac{1}{4}\begin{bmatrix}
1 & -1 \\
\end{bmatrix}
\Phi
\begin{bmatrix}
1  \\
-1\\ 
\end{bmatrix}
$,
$
T_3=\frac{1}{4}\begin{bmatrix}
1 & \i \\ 
\end{bmatrix}
\Phi
\begin{bmatrix}
1  \\
-\i\\ 
\end{bmatrix}
$
and
$
T_4=\frac{1}{4}\begin{bmatrix}
1 & -\i \\ 
\end{bmatrix}
\Phi
\begin{bmatrix}
1  \\
\i\\ 
\end{bmatrix}
$ are $n$-positive from $\L^p(\cal{M})$ into $\L^p(\cal{N})$ and it is easy to check that $T=T_1-T_2+\i(T_3-T_4)$.
\end{proof}

This result allows us to describe the $n$-pseudo-decomposable maps on classical $\L^p$-spaces.

\begin{example}\normalfont
Let $\Omega$ and $\Omega'$ be $\sigma$-finite measure spaces. If $n \geq 1$, any $n$-pseudo-decomposable map $T \co \L^p(\Omega) \to \L^p(\Omega)$ is necessarily  regular (=decomposable by \cite[Theorem 3.24]{ArK23}). The converse is obviously true.
\end{example}

\begin{remark} \normalfont
A similar notion of $n$-pseudo-decomposable map could be defined between ternary rings of operators, generalizing the decomposable maps of \cite[Section 7]{KaR02}. 
\end{remark}

\section{Complementation of rectangular $\L^p$-spaces in noncommutative $\L^p$-spaces}
\label{Sec-complementation}

We start by giving examples of decomposable maps on noncommutative $\L^p$-spaces. We will also use this observation in the proof of Theorem \ref{main-th-ds-intro}. In the two following results, we can use Haagerup noncommutative $\L^p$-spaces or Kosaki noncommutative $\L^p$-spaces \cite{Kos84} (see also \cite{Ray03} for the construction of the positive cone).

\begin{lemma}
\label{Mult-decomposable}
Let $\cal{M}$ be a von Neumann algebra equipped with a normal semifinite faithful weight $\varphi$. Suppose $1 \leq p \leq \infty$. Let $a,b \in \cal{M}$. Then the two-sided multiplication operator $M_{a,b} \co \L^p(\cal{M}) \to \L^{p}(\cal{M})$, $x \mapsto axb$ is decomposable and we have
\begin{equation}
\label{estimation-dec-norm}
\norm{M_{a,b}}_{\dec, \L^p(\cal{M}) \to \L^{p}(\cal{M})}
\leq \norm{a}_{\L^\infty(\cal{M})} \norm{b}_{\L^\infty(\cal{M})}.
\end{equation}
More precisely, the map $\Phi \ov{\mathrm{def}}{=} \begin{bmatrix}
   M_{a,a^*}  &  M_{a,b} \\
   M_{a,b}^\circ  &  M_{b^*,b}  \\
\end{bmatrix} \co S^p_2(\L^p(\cal{M})) \to S^p_2(\L^p(\cal{M}))$ is completely positive.
\end{lemma}

\begin{proof}
For any $z \in \cal{M}$, we see that $M_{a,b}^\circ(z)=(M_{a,b}(z^*))^*=(a z^* b)^*=b^*za^*$. 
Consequently, for any element $\begin{bmatrix}
   x  &  y \\
   z &  t  \\
\end{bmatrix}$ of the space $S^{p}_2(\L^p(\cal{M}))$, we have
\begin{equation*}
\label{}
\Phi\left(\begin{bmatrix}
   x  &  y \\
   z &  t  \\
\end{bmatrix}\right)
=\begin{bmatrix}
   a x a^*  &  ayb \\
   b^*za^*  &  b^*tb  \\
\end{bmatrix}
=\begin{bmatrix}
   a  &  0 \\
   0  &  b^*  \\
\end{bmatrix}
\begin{bmatrix}
   x  &  y \\
   z  &  t  \\
\end{bmatrix}
\begin{bmatrix}
   a^*  &  0 \\
   0  &  b  \\
\end{bmatrix}
=\begin{bmatrix}
   a  &  0 \\
   0  &  b^*  \\
\end{bmatrix}
\begin{bmatrix}
   x  &  y \\
   z  &  t  \\
\end{bmatrix}
\begin{bmatrix}
   a  &  0 \\
   0  &  b^* \\
\end{bmatrix}^*.
\end{equation*}
We deduce that $\Phi$ is completely positive. By \eqref{Norm-dec-intro}, we infer that $\norm{M_{a,b}}_{\dec,\L^p(\cal{M}) \to \L^p(\cal{M})} \leq \max\{\norm{M_{a,a^*}},\norm{M_{b^*,b}}\}$. By H\"older's inequality, we see that for any $x \in \L^p(\cal{M})$ we have $\norm{M_{a,a^*}(x)}_{\L^p(\cal{M})}=\norm{a x a^*}_{\L^p(\cal{M})} \leq \norm{x}_{\L^p(\cal{M})}\norm{a}_{\L^\infty(\cal{M})}^2$. So $\norm{M_{a,a^*}} \leq \norm{a}_{\L^\infty(\cal{M})}^2$ and similarly $\norm{M_{b^*,b}} \leq \norm{b}_{\L^\infty(\cal{M})}^2$. We obtain that 
$$
\norm{M_{a,b}}_{\dec,\L^p(\cal{M}) \to \L^{p}(\cal{M})}
\leq \max\big\{\norm{a}_{\L^\infty(\cal{M})}^2,\norm{b}_{\L^\infty(\cal{M})}^2\big\}.
$$
Replacing $a$ by $\frac{1}{\norm{a}}a$ and $b$ by $\frac{1}{\norm{b}} b$, we conclude that\footnote{\thefootnote. Indeed, we have 
$$
\frac{1}{\norm{a}_\infty\norm{b}_\infty}\norm{M_{a,b}}_{\dec,\L^p(\cal{M}) \to \L^p(\cal{M})}
=\bgnorm{M_{\frac{1}{\norm{a}}a,\frac{1}{\norm{b}}b}}_{\dec,\L^p(\cal{M}) \to \L^p(\cal{M})}
\leq \max\bigg\{\bgnorm{\frac{1}{\norm{a}}a}^2,\bgnorm{\frac{1}{\norm{b}}b}^2 \bigg\}.
$$} the inequality \eqref{estimation-dec-norm} is true.
\end{proof}

\begin{prop}
\label{Dec-proj}
Let $e,f$ be some orthogonal projections of a von Neumann algebra $\cal{M}$ equipped with a normal semifinite faithful weight $\varphi$. Suppose $1 \leq p \leq \infty$. The linear map $P \co \L^p(\cal{M}) \to \L^p(\cal{M})$, $x \mapsto exf$ is a contractively decomposable projection whose range is the closed subspace $e\L^p(\cal{M})f$.
\end{prop}

\begin{proof}
By Lemma \ref{Mult-decomposable}, the map $P$ is contractively decomposable. For any $x \in \L^p(\cal{M})$, we have $P^2(x)=P(exf)=e^2xf^2=exf=P(x)$. So $P^2=P$, i.e. $P$ is a projection. The last assertion is clear.
\end{proof}

\begin{remark} \normalfont
Recall that a linear projection on a $\JBW^*$-triple $X$ is said structural if for any elements $x,y,z$ of $X$ we have 
$$
P(\{x, P(y), z\})
=\{P(x), y, P(z)\}.
$$
By \cite[Theorem 5.3]{ECR96}, such a projection is necessarily contractive and weak* continuous. Moreover, it is proved in \cite{ECR96} that the map $P \mapsto P(X)$ is a bijection from the set of structural projections on $X$ onto the set of weak* closed inner ideals of $X$ (i.e. the set of weak* closed subspaces $J$ of $X$ satisfying $\{J,X,J\} \subset J$). If $p=\infty$, note that by \cite[Theorem 6.1]{ECR96} the projections $P\co \L^\infty(\cal{M}) \to \L^\infty(\cal{M})$ of Proposition \ref{Dec-proj} are \textit{exactly} the structural projections of $\cal{M}$. We refer to \cite{EHR03} for another characterization of structural projections on $\JBW^*$-triples and to \cite{EdR89} and \cite{EdR98b} for more information on weak* closed inner ideals.
\end{remark}

In the spirit of \cite[p.~869]{KaR02}, we define the rectangular $\L^p$-spaces of $\W^*$-$\TRO$s using Kosaki noncommutative $\L^p$-spaces of \cite{Kos84} \cite{Ray03}. Indeed, here we need an inclusion of the linking von Neumann algebra in its noncommutative $\L^p$-spaces. So we cannot use Haagerup noncommutative $\L^p$-spaces in this definition.

\begin{defi}
Let $V$ be a $\W^*$-$\TRO$ with linking von Neumann algebra $R(V)$. Suppose  that the von Neumann algebra $R(V)$ is $\sigma$-finite equipped with a normal faithful state $\varphi$. Suppose $1 \leq p < \infty$. We define the rectangular $\L^p$-space $\L^p(V,\varphi)$ to be the norm closure of $V=eR(V)e^\perp$ in the Kosaki noncommutative $\L^p$-space $\L^p(R(V),\varphi)$. 
\end{defi}

It is easy to check that $\L^p(V,\varphi)=e\L^p(R(V),\varphi)e^\perp$ since $R(V)$ is dense in the Banach space $\L^p(R(V),\varphi)$. We let $\L^\infty(V,\varphi) \ov{\mathrm{def}}{=} V$. An immediate use of Proposition \ref{Dec-proj} gives the following result.

\begin{cor}
Let $V$ be a $\W^*$-$\TRO$ such that the linking von Neumann algebra $R(V)$ is $\sigma$-finite and equipped with a normal faithful state $\varphi$. Suppose $1 \leq p \leq \infty$. Then the rectangular $\L^p$-space $\L^p(V,\varphi)$ is a contractively complemented subspace of the Kosaki noncommutative $\L^p$-space $\L^p(R(V),\varphi)$ by a contractively decomposable projection.
\end{cor}

\begin{remark} \normalfont
Recall that Kosaki noncommutative $\L^p$-spaces are defined by interpolation. Using Lemma \ref{Lemma-interpolation} combined with Proposition \ref{Dec-proj}, it is immediate that we have the complex interpolation formula $\L^p(V,\varphi)=(\L^\infty(V,\varphi),\L^1(V,\varphi))_{\frac{1}{p}}$. 

The paper \cite[Appendix 6]{GJL18} contains related results. However, note that in \cite[Appendix 6]{GJL18}, it is written that for any $\W^*$-$\TRO$ $V$ of $\B(H)$ we have an isometry $V_p=(V,V_1)_{\frac{1}{p}}$ where $V_p$ is the closure of the intersection $X \cap S^p(H)$ in the Schatten space $S^p(H)$,  relying on the existence of a couple of \textit{compatible} contractive projections $(P_\infty, P_1)$ on $V$ and $V_1$ (and Lemma \ref{Lemma-interpolation}). However, this existence seems unclear in the general case where $V$ is not a weak* closed inner ideal of $\B(H)$. The concrete representation of $V$ in $\B(H)$ seems to be crucial.
\end{remark}

\begin{remark} \normalfont
Using the construction of noncommutative $\L^p$-spaces of \cite{Ter82}, it is left to the interested reader to generalize the previous corollary to arbitrary $\W^*$-TROs using weights instead states. In this situation, a von Neumann algebra is not a subset of its noncommutative $\L^p$-spaces.
\end{remark}

\section{A lifting for $n$-pseudo-decomposable maps on noncommutative $\L^p$-spaces} 
\label{Sec-lifting}

Our main tool will be the following result \cite[Theorem 2.7]{ArR19} which is a lifting theorem for positive maps acting on noncommutative $\L^p$-spaces which has its roots in \cite[Theorem 3.1]{JRX05}. 

\begin{thm}
\label{Th-relevement-cp} 
Let $\cal{M}$ and $\cal{N}$ be von Neumann algebras and $n \in \{1,2,\ldots,\infty\}$. Suppose $1 \leq p < \infty$. Let $T \co \L^p(\cal{M}) \to \L^p(\cal{N})$ be an $n$-positive linear map. Let $h$ be a positive element of $\L^p(\cal{M})$. 
Then there exists a unique linear map $v \co \cal{M} \to s(T(h))\cal{N}s(T(h))$ such that 
\begin{equation}
\label{equa-relevement}
T\big(h^{\frac{1}{2}}xh^{\frac{1}{2}}\big)
=T(h)^{\frac{1}{2}}v(x)T(h)^{\frac{1}{2}},\qquad x \in \cal{M}.
\end{equation}
Moreover, this map $v$ is unital, $n$-positive, contractive and normal.
\end{thm}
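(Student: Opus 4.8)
The plan is to take for granted everything in the statement except its last sentence: existence and uniqueness of $v$, together with its unitality, contractivity and normality, are exactly the content of the extension of \cite[Theorem 3.1]{JRX} recorded in \cite{Arh1} (proved by a one-sided change of density), so I concentrate on the new claim that $n$-positivity passes from $T$ to $v$. The idea is to transport a positive matrix over $M$ through the density $h$, push it forward by the $n$-positive map $T$ via the defining relation \eqref{equa-relevement}, and then ``remove'' the resulting density $T(h)^{\frac12}$ by an approximation. I carry this out for an arbitrary finite $n$; the case $n=\infty$ follows by applying the same reasoning to all matrix sizes.

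So fix $n$ and let $[x_{ij}]_{i,j=1}^n\in\M_n(M)_+$. With $A\ov{\mathrm{def}}{=}[x_{ij}]^{\frac12}\in\M_n(M)$ and $D\ov{\mathrm{def}}{=}\mathrm{diag}(h^{\frac12},\dots,h^{\frac12})$, the matrix
$$
\big[h^{\frac12}x_{ij}h^{\frac12}\big]=D^*[x_{ij}]D=(AD)^*(AD)
$$
is a positive element of $S^p_n(\L^p(M))$, since the entries of $AD$ lie in $\L^{2p}(M)$. As $T$ is $n$-positive, $\Id_{S^p_n}\ot T$ is positive; applying it entrywise and invoking \eqref{equa-relevement} gives
$$
\big[T(h)^{\frac12}\,v(x_{ij})\,T(h)^{\frac12}\big]=\big[T\big(h^{\frac12}x_{ij}h^{\frac12}\big)\big]\geq 0
\quad\text{in }S^p_n(\L^p(N)).
$$
Writing $b\ov{\mathrm{def}}{=}T(h)^{\frac12}\in\L^{2p}(N)$, $Y\ov{\mathrm{def}}{=}[v(x_{ij})]$ and $B\ov{\mathrm{def}}{=}\mathrm{diag}(b,\dots,b)$, this says $BYB\geq 0$; moreover $Y$ is a bounded operator and, because $v$ takes values in $s(T(h))Ns(T(h))$, one has $Y=EYE$ with $E\ov{\mathrm{def}}{=}\mathrm{diag}(e,\dots,e)$ and $e\ov{\mathrm{def}}{=}s(T(h))=s(b)$. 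Everything that follows aims at deducing $Y\geq 0$.

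The difficulty is that $b$ is a generally unbounded measurable operator which is not bounded below, so $B$ cannot be inverted. I argue inside the semifinite von Neumann algebra $\mathcal{N}$ with respect to which $\L^p(N)$ is realized, noting that positivity in $\M_n(N)$ is the same as positivity as a bounded operator on the representation space of $\M_n(\mathcal{N})$. For $\epsilon>0$ set $f_\epsilon(t)=t(t^2+\epsilon)^{-1}$ and $g_\epsilon(t)=t^2(t^2+\epsilon)^{-1}$; then $f_\epsilon(b),g_\epsilon(b)\in\mathcal{N}$ are positive contractions obtained by bounded Borel functional calculus, with $f_\epsilon(b)\,b=g_\epsilon(b)$, and, since $g_\epsilon\to\chi_{(0,\infty)}$ pointwise and boundedly, $g_\epsilon(b)\to e$ in the strong operator topology. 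Putting $B_\epsilon\ov{\mathrm{def}}{=}\mathrm{diag}(f_\epsilon(b),\dots,f_\epsilon(b))$, the element
$$
B_\epsilon(BYB)B_\epsilon=\big[g_\epsilon(b)\,v(x_{ij})\,g_\epsilon(b)\big]
$$
is a conjugate of $BYB\geq 0$ by the bounded self-adjoint $B_\epsilon$, hence a positive (bounded) operator. Using $g_\epsilon(b)e=g_\epsilon(b)$ and $Y=EYE$, together with the strong-operator continuity of multiplication on bounded sets, this net converges to $[\,e\,v(x_{ij})\,e\,]=Y$ in the strong operator topology. A strong-operator limit of positive operators is positive, so $Y\geq 0$, that is $[v(x_{ij})]\geq 0$; as $[x_{ij}]$ was arbitrary, $v$ is $n$-positive.

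The main obstacle is precisely this last ``division'' step, namely passing from $BYB\geq 0$ back to $Y\geq 0$ while $b=T(h)^{\frac12}$ is unbounded and non-invertible. What makes it succeed is the exact matching of supports: the support of $b$ is the projection $e=s(T(h))$ cutting out the corner $s(T(h))Ns(T(h))$ into which $v$ maps, so the approximants $g_\epsilon(b)$ tend to $e$ and act as a two-sided identity on the range of $Y$ in the limit. The remaining points (membership of $f_\epsilon(b),g_\epsilon(b)$ in $\mathcal{N}$, the uniform bounds, and the strong-operator convergences) are routine within the Haagerup $\L^p$ calculus.
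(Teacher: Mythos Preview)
Your argument is correct, but it takes a genuinely different route from the paper. The paper does not divide out the density at all: it applies the already-established existence/uniqueness part of the theorem (the case of a merely positive map) to the amplification $T^{(n)}\ov{\mathrm{def}}{=}\Id_{S^p_n}\ot T$ with the density $\I_n\ot h$, obtaining a unique positive lifting $v_n\co\M_n(M)\to\M_n\big(s(T(h))Ns(T(h))\big)$. A short entrywise computation then shows that $\Id_{\M_n}\ot v$ also satisfies the defining relation \eqref{equa-relevement} for $T^{(n)}$ and $\I_n\ot h$; by uniqueness, $v_n=\Id_{\M_n}\ot v$, and positivity of $v_n$ is precisely $n$-positivity of $v$. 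This is shorter and avoids any spectral approximation, turning uniqueness into the working tool --- a device the paper reuses later (e.g.\ in the proof of Lemma \ref{Lemma-bis-2}). Your approach, by contrast, is self-contained at this step and does not re-invoke the abstract lifting result at the matrix level: you manufacture a bounded approximate inverse $f_\epsilon(b)$ for $b=T(h)^{1/2}$ by functional calculus and pass to the strong limit. What you gain is a concrete mechanism that makes transparent why the support condition $v(M)\subset s(T(h))Ns(T(h))$ is exactly what is needed; what you pay is the bookkeeping in $\L^0(\mathcal{N},\tau)$.
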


Now, in the same spirit we prove a lifting theorem for $n$-pseudo-decomposable maps.

\begin{thm}
\label{Lifting-decomposable}
Let $\cal{M}$ and $\cal{N}$ be von Neumann algebras and $n \in \{1,2,\ldots,\infty\}$. Suppose $1 \leq p < \infty$. Let $T \co \L^p(\cal{M}) \to \L^p(\cal{N})$ be an $n$-pseudo-decomposable map and $v_1,v_2 \co \L^p(\cal{M}) \to \L^p(\cal{N})$ be some maps such that the operator $\Phi$ of \eqref{Matrice-2-2-Phi} is $n$-positive. Let $h$ and $k$ be positive elements of $\L^p(\cal{M})$. Then there exists a unique linear map $w \co \cal{M} \to s(v_1(h))\cal{N}s(v_2(k))$ such that
\begin{equation}
\label{lifting-decomposable}
T\big(h^{\frac{1}{2}}xk^{\frac{1}{2}}\big)
=v_1(h)^{\frac{1}{2}}w(x)v_2(k)^{\frac{1}{2}}
\end{equation}
for any $x \in \cal{M}$. Moreover, $w$ is normal and contractively $n$-pseudo-decomposable. 
\end{thm}

\begin{proof}
Consider the positive element $H \ov{\mathrm{def}}{=} \begin{bmatrix}
  h   &  0 \\
  0  &  k  \\
\end{bmatrix}$ of the space $S^p_2(\L^p(\cal{M}))$. The support of the element $\Phi(H)$ of $S^p_2(\L^p(\cal{M}))$ is given by 
\begin{equation}
\label{Support}
s(\Phi(H))
=s\left(\Phi\left(\begin{bmatrix}
  h   &  0 \\
  0  &  k  \\
\end{bmatrix}\right)\right)
\ov{\eqref{Matrice-2-2-Phi}}{=}s\left(\begin{bmatrix}
  v_1(h)   &  0 \\
  0  &  v_2(k)  \\
\end{bmatrix}\right)
=\begin{bmatrix}
  s(v_1(h))   &  0 \\
  0  &  s(v_2(k))  \\
\end{bmatrix}.
\end{equation}
Using Theorem \ref{Th-relevement-cp} with the $n$-positive operator \eqref{Matrice-2-2-Phi} and the positive element $H$ instead of $h$, we see that there exists a unique linear map $v \co \M_2(\cal{M}) \to s(\Phi(H))\M_2(\cal{N})s(\Phi(H))$ such that for any element $X\ov{\mathrm{def}}{=} \begin{bmatrix}
  x_{11}   &  x_{12} \\
  x_{21}  & x_{22}   \\
\end{bmatrix}$ of $\M_2(\cal{M})$ we have
\begin{equation}
\label{Equa-divers-3421}
\Phi\left(\begin{bmatrix}
  h   &  0 \\
  0  &  k  \\
\end{bmatrix}^{\frac{1}{2}}
\begin{bmatrix}
  x_{11}   &  x_{12} \\
  x_{21}  & x_{22}   \\
\end{bmatrix}\begin{bmatrix}
  h   &  0 \\
  0  &  k  \\
\end{bmatrix}^{\frac{1}{2}}\right)
=\Phi\left(\begin{bmatrix}
  h   &  0 \\
  0  &  k  \\
\end{bmatrix}\right)^{\frac{1}{2}}
v(X)
\Phi\left(\begin{bmatrix}
  h   &  0 \\
  0  &  k  \\
\end{bmatrix}\right)^{\frac{1}{2}}.
\end{equation}
Moreover, the map $v$ is a normal $n$-positive contraction. On the one hand, we have
\begin{align}
\label{Divers-457}
 \Phi\left(\begin{bmatrix}
  h   &  0 \\
  0  &  k  \\
\end{bmatrix}^{\frac{1}{2}}
\begin{bmatrix}
  x_{11}   &  x_{12} \\
  x_{21}  & x_{22}   \\
\end{bmatrix}\begin{bmatrix}
  h   &  0 \\
  0  &  k  \\
\end{bmatrix}^{\frac{1}{2}}\right)           
&=\Phi\left(\begin{bmatrix}
  h^{\frac{1}{2}} x_{11} h^{\frac{1}{2}}  & h^{\frac{1}{2}}  x_{12}k^{\frac{1}{2}}  \\
 k^{\frac{1}{2}}  x_{21} h^{\frac{1}{2}}  & k^{\frac{1}{2}}  x_{22} k^{\frac{1}{2}}   \\
\end{bmatrix}\right)\\ 
&\ov{\eqref{Matrice-2-2-Phi}}{=} \begin{bmatrix}
  v_1(h^{\frac{1}{2}} x_{11} h^{\frac{1}{2}})   & T(h^{\frac{1}{2}}  x_{12}k^{\frac{1}{2}})  \\
 T^\circ(k^{\frac{1}{2}} x_{21} h^{\frac{1}{2}})  & v_2(k^{\frac{1}{2}}  x_{22} k^{\frac{1}{2}})   \\
\end{bmatrix} \nonumber.
\end{align} 
On the other hand, we have
\begin{align}
\MoveEqLeft
\label{Divers-456}
 \Phi\left(\begin{bmatrix}
  h   &  0 \\
  0  &  k \\
\end{bmatrix}\right)^{\frac{1}{2}}
v(X)
\Phi\left(\begin{bmatrix}
  h   &  0 \\
  0  &  k \\
\end{bmatrix}\right)^{\frac{1}{2}} 
\ov{\eqref{Matrice-2-2-Phi}}{=} 
\begin{bmatrix}
  v_1(h)   &  0 \\
  0  &  v_2(k)  \\
\end{bmatrix}^{\frac{1}{2}}
v(X)
\begin{bmatrix}
  v_1(h)   &  0 \\
  0  &  v_2(k)  \\
\end{bmatrix}^{\frac{1}{2}} \\
&=\begin{bmatrix}
  v_1(h)^{\frac{1}{2}}   &  0 \\
  0  &  v_2(k)^{\frac{1}{2}}  \\
\end{bmatrix}
v(X)
\begin{bmatrix}
  v_1(h)^{\frac{1}{2}}   &  0 \\
  0  &  v_2(k)^{\frac{1}{2}}  \\
\end{bmatrix}. \nonumber 
\end{align} 
Consequently, for any element $X
=\begin{bmatrix}
  x_{11}  &  x_{12} \\
  x_{21}  & x_{22}   \\
\end{bmatrix}$ of $\M_2(\cal{M})$, we have
\begin{equation}
\label{Equa-fond-1234}
\begin{bmatrix}
  v_1(h^{\frac{1}{2}} x_{11} h^{\frac{1}{2}}) & T(h^{\frac{1}{2}} x_{12}k^{\frac{1}{2}})  \\
 T^\circ(k^{\frac{1}{2}} x_{21} h^{\frac{1}{2}})  & v_2(k^{\frac{1}{2}} x_{22} k^{\frac{1}{2}})   \\
\end{bmatrix}
\ov{\eqref{Equa-divers-3421} \eqref{Divers-457} \eqref{Divers-456}}{=} \begin{bmatrix}
  v_1(h)^{\frac{1}{2}}   &  0 \\
  0  &  v_2(k)^{\frac{1}{2}}  \\
\end{bmatrix}
v(X)
\begin{bmatrix}
  v_1(h)^{\frac{1}{2}}   &  0 \\
  0  &  v_2(k)^{\frac{1}{2}}  \\
\end{bmatrix}.
\end{equation}
Now, we introduce the map 
$$
u \co \cal{M} \to s(\Phi(H))\M_2(\cal{N})s(\Phi(H))\ov{\eqref{Support}}{=}\begin{bmatrix}
  s(v_1(h))\cal{N}s(v_1(h))   &  s(v_1(h))\cal{N}s(v_2(k)) \\
  s(v_2(k))\cal{N}s(v_1(h))  &  s(v_2(k))\cal{N}s(v_2(k))  \\
\end{bmatrix}
$$ 
defined by
$u(x)
\ov{\mathrm{def}}{=} v\left(\begin{bmatrix}
  x   &  x \\
  x  &  x  \\
\end{bmatrix}\right)$. The map $\cal{M} \mapsto \M_2(\cal{M})$,  $x \mapsto \begin{bmatrix}
  x   &  x \\
  x  &  x  \\
\end{bmatrix}=
\begin{bmatrix}
1  \\
1\\ 
\end{bmatrix} 
x
\begin{bmatrix}
1  &1 \\ 
\end{bmatrix}$ is completely positive and obviously normal. By composition, the map $u$ is normal and $n$-positive. We can write $u=\begin{bmatrix}
  u_{1}   &  w\\
  u_3  &  u_{2}  \\
\end{bmatrix}$ for some normal maps $u_1 \co \cal{M} \to s(v_1(h))\cal{N}s(v_1(h))$, $u_2 \co \cal{M} \to s(v_2(k))\cal{N}s(v_2(k))$, $u_3 \co \cal{M} \to s(v_2(k))\cal{N}s(v_1(h))$ and $w \co \cal{M} \to s(v_1(h))\cal{N}s(v_2(k))$. The map $u$ is positive hence $*$-preserving. Thus, for any $x \in \cal{M}$ we have 
$$
\begin{bmatrix}
  u_{1}(x)^*   & u_{3}(x)^*  \\
  w(x)^*  &  u_{2}(x)^*  \\
\end{bmatrix}=\begin{bmatrix}
  u_{1}(x)   &  w(x) \\
  u_{3}(x)  &  u_{2}(x)  \\
\end{bmatrix}^*
=u(x)^*
=u(x^*)
=\begin{bmatrix}
  u_{1}(x^*)   &  w(x^*) \\
  u_{3}(x^*)  &  u_{2}(x^*)  \\
\end{bmatrix}.
$$ 
In particular, we deduce that $w(x^*)=u_{3}(x)^*$ for any $x \in \cal{M}$. Hence $u_{3}=w^\circ$. So $u=\begin{bmatrix}
  u_{1}   &  w\\
  w^\circ  &  u_{2}  \\
\end{bmatrix}$. This implies that $w$ is $n$-pseudo-decomposable and that $u_1$ and $u_2$ are $n$-positive. Moreover, for any $x \in \cal{M}$, we see that 
\begin{align*}
\begin{bmatrix}
  v_1(h^{\frac{1}{2}} x h^{\frac{1}{2}}) & T(h^{\frac{1}{2}} xk^{\frac{1}{2}})  \\
 T^\circ(k^{\frac{1}{2}}  x h^{\frac{1}{2}})  & v_2(k^{\frac{1}{2}} x k^{\frac{1}{2}})   \\
\end{bmatrix}
&\ov{\eqref{Equa-fond-1234}}{=} \begin{bmatrix}
  v_1(h)^{\frac{1}{2}}   &  0 \\
  0  &  v_2(k)^{\frac{1}{2}}  \\
\end{bmatrix}
v\left(\begin{bmatrix}
  x   &  x \\
  x  &  x  \\
\end{bmatrix}\right)
\begin{bmatrix}
  v_1(h)^{\frac{1}{2}}   &  0 \\
  0  &  v_2(k)^{\frac{1}{2}}  \\
\end{bmatrix} \\
&=\begin{bmatrix}
  v_1(h)^{\frac{1}{2}}   &  0 \\
  0  &  v_2(k)^{\frac{1}{2}}  \\
\end{bmatrix}
\begin{bmatrix}
		u_1(x)& w(x)  \\
  w^\circ(x)  & u_2(x)  \\
\end{bmatrix}
\begin{bmatrix}
v_1(h)^{\frac{1}{2}}   &  0 \\
0  & v_2(k)^{\frac{1}{2}}  \\
\end{bmatrix}
\\
&=\begin{bmatrix}
v_1(h)^{\frac{1}{2}}u_1(x)v_1(h)^{\frac{1}{2}}& v_1(h)^{\frac{1}{2}}w(x)v_2(k)^{\frac{1}{2}} \\
v_2(k)^{\frac{1}{2}}w^\circ(x)v_1(h)^{\frac{1}{2}}  & v_2(k)^{\frac{1}{2}}u_2(x)v_2(k)^{\frac{1}{2}} \\
\end{bmatrix}.
\end{align*}
Looking at the (1-2)-entry, we obtain \eqref{lifting-decomposable}. Looking at the other entries, we deduce by uniqueness that $u_{1}$ and $u_{2}$ are the liftings given by Theorem \ref{Th-relevement-cp} of the $n$-positive maps $v_1,v_2 \co \L^p(\cal{M}) \to \L^p(\cal{N})$. Consequently, we have the inequalities $\norm{u_1} \leq 1$ and $\norm{u_2} \leq 1$ and thus $\norm{w}_{n-\pdec, \cal{M} \to \cal{N}} \ov{\eqref{Norm-dec}}{\leq} 1$. The uniqueness is obvious. 
\end{proof}

The case $n=\infty$ gives the following particular case.

\begin{cor}
\label{CorLifting-decomposable}
Let $\cal{M}$ and $\cal{N}$ be von Neumann algebras. Suppose $1 \leq p < \infty$. Let $T \co \L^p(\cal{M}) \to \L^p(\cal{N})$ be a decomposable map and $v_1,v_2 \co \L^p(\cal{M}) \to \L^p(\cal{N})$ be some maps such that the operator \eqref{Matrice-2-2-Phi} is completely positive. Let $h$ and $k$ be positive elements of $\L^p(\cal{M})$. Then there exists a unique linear map  $w \co \cal{M} \to s(v_1(h))\cal{N}s(v_2(k))$ such that
\begin{equation}
\label{lifting-decomposable-2}
T\big(h^{\frac{1}{2}}xk^{\frac{1}{2}}\big)
=v_1(h)^{\frac{1}{2}}w(x)v_2(k)^{\frac{1}{2}}
\end{equation}
for any $x \in \cal{M}$. Moreover, $w$ is normal and contractively decomposable.
\end{cor}

\begin{remark} \normalfont
Let $\cal{M}$ be a von Neumann algebra. By \cite[Theorem 1.6]{Haa85}, if the von Neumann algebra $\cal{N}$ is injective, we have a canonical isometry $\CB(\cal{M},\cal{N})=\Dec(\cal{M},\cal{N})$, that means that any completely bounded map $T \co \cal{M} \to \cal{N}$ is necessary decomposable with $\norm{T}_{\cb,\cal{M} \to \cal{N}}=\norm{T}_{\dec,\cal{M} \to \cal{N}}$. 
\end{remark}

\section{Contractively $n$-pseudo-decomposable projections} 
\label{Sec-Contractively-decomposable-projections}

Let $X$ be a Banach space and $T \co X \to X$ be a bounded operator. For any integer $m \geq 1$, we define the average 
$
A_{m,T}
\ov{\mathrm{def}}{=} \frac{1}{m} \sum_{k=1}^{m} T^k
$
of the first $m$ iterates of $T$. Now, we use ergodic theory to obtain information on contractively $n$-pseudo-decomposable projections on noncommutative $\L^p$-spaces. It is \textit{important} to note in the following result that the map $\begin{bmatrix}
   P_1  &  P \\
   P^\circ  &  P_2  \\
\end{bmatrix}$ is (a priori) not necessarily contractive. It is related to Question \ref{Open-quest}. 

\begin{prop}
\label{prop-cd-proj-v1v2-proj}
Let $\cal{M}$ be a von Neumann algebra and $n \in \{1,2,\ldots,\infty\}$. Suppose $1<p<\infty$. Let $P \co \L^p(\cal{M}) \to \L^p(\cal{M})$ be a contractively $n$-pseudo-decomposable projection. There exist  ($n$-positive) contractive projections $P_1,P_2 \co \L^p(\cal{M}) \to \L^p(\cal{M})$ such that the linear map $
\begin{bmatrix}
   P_1  &  P \\
   P^\circ  &  P_2  \\
\end{bmatrix}
\co S^p_2(\L^p(\cal{M})) \to S^p_2(\L^p(\cal{M}))$ is an $n$-positive projection.
\end{prop}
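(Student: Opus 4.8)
The plan is to start from a decomposition realizing the pseudo-decomposable norm and then to average away the diagonal entries via the mean ergodic theorem. Since $\norm{P}_{n-\pdec,\L^p(M) \to \L^p(M)} \leq 1$ and the infimum in \eqref{Norm-dec} is attained by Proposition \ref{Prop-dec-inf-atteint}, I would fix \emph{contractions} $v_1,v_2 \co \L^p(M) \to \L^p(M)$ (automatically $n$-positive, by the remark following Definition \ref{Def-pseudo-decomposable}) such that $\Phi_0 = \begin{bmatrix} v_1 & P \\ P^\circ & v_2 \end{bmatrix}$ is $n$-positive. The key structural observation is that $\Phi_0$ acts entrywise on $S^p_2(\L^p(M))$, so its iterates decouple: as $P$ and $P^\circ$ are idempotents (one checks $(P^\circ)^2 = P^\circ$ directly from $P^\circ(c) = P(c^*)^*$ together with $P^2 = P$), one gets $\Phi_0^k = \begin{bmatrix} v_1^k & P \\ P^\circ & v_2^k \end{bmatrix}$ for every $k \geq 1$, whence the Cesàro averages satisfy $A_{m,\Phi_0} = \begin{bmatrix} A_{m,v_1} & P \\ P^\circ & A_{m,v_2} \end{bmatrix}$.

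Next I would pass to the limit. Each $\Phi_0^k$ is $n$-positive, being the $k$-fold composition of the $n$-positive map $\Phi_0$ (at the level of $\Id_{S^p_n} \ot \Phi_0^k = (\Id_{S^p_n} \ot \Phi_0)^k$, composition of positive maps is positive), hence each average $A_{m,\Phi_0}$ is $n$-positive as a positive combination. Since $1<p<\infty$, the space $\L^p(M)$ is reflexive and $v_1,v_2$ are contractions, so the mean ergodic theorem yields strong convergence $A_{m,v_1} \to P_1$ and $A_{m,v_2} \to P_2$, where $P_1,P_2$ are contractive projections onto the respective fixed-point spaces, satisfying $v_iP_i = P_iv_i = P_i$ and $P_i^2 = P_i$. (Alternatively, one may extract weak-operator-convergent subnets from the bounded sequences $(A_{m,v_i})$ using reflexivity, exactly as in the proof of Proposition \ref{Prop-dec-inf-atteint}, and recover the idempotency from $v_iA_{m,v_i} - A_{m,v_i} \to 0$.) Consequently $A_{m,\Phi_0} \to \Phi = \begin{bmatrix} P_1 & P \\ P^\circ & P_2 \end{bmatrix}$ in the weak operator topology.

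It then remains to collect the properties of $\Phi$. By \cite[Lemma 2.8]{ArK}, a weak-operator limit of $n$-positive maps is $n$-positive; applied to $(A_{m,\Phi_0})$ this shows $\Phi$ is $n$-positive, and applied to $(A_{m,v_i})$ it shows $P_1,P_2$ are $n$-positive. By weak lower semicontinuity of the norm, $\norm{P_i} \leq \liminf_m \norm{A_{m,v_i}} \leq 1$, so $P_1,P_2$ are contractive. Finally, $\Phi$ is idempotent: since it again acts entrywise and each of $P_1,P,P^\circ,P_2$ is a projection, one reads off $\Phi^2 = \begin{bmatrix} P_1^2 & P^2 \\ (P^\circ)^2 & P_2^2 \end{bmatrix} = \Phi$. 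This produces the required $n$-positive projection with contractive $n$-positive diagonal blocks $P_1,P_2$.

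The main obstacle, and the point where the hypotheses are genuinely used, is that $\Phi_0$ is in general \emph{not} a contraction (cf. Remark \ref{Open-remark}), so one cannot apply an ergodic argument to $\Phi_0$ as a single operator on $S^p_2(\L^p(M))$. The whole scheme hinges on the entrywise action of $\Phi_0$, which freezes the off-diagonal idempotents $P,P^\circ$ under iteration and thereby reduces the averaging to the two genuine contractions $v_1,v_2$; reflexivity of $\L^p(M)$ for $1<p<\infty$ is precisely what makes these ergodic averages converge and their limits be projections.
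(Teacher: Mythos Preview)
Your proof is correct and follows essentially the same approach as the paper: start from a minimizing pair $(v_1,v_2)$ via Proposition \ref{Prop-dec-inf-atteint}, exploit the entrywise action to get $\Phi_0^k=\begin{bmatrix} v_1^k & P\\ P^\circ & v_2^k\end{bmatrix}$, apply the mean ergodic theorem to the diagonal contractions using reflexivity of $\L^p(M)$, and pass to the weak limit to retain $n$-positivity. You are in fact slightly more explicit than the paper in checking that the limit $\Phi$ is idempotent (the paper leaves this implicit), and your closing remark about why one cannot simply run ergodic theory on $\Phi_0$ itself is exactly the content of Remark \ref{Open-remark}.
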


\begin{proof}
By Proposition \ref{Prop-dec-inf-atteint}, there exist bounded linear maps $v_1,v_2 \co \L^p(\cal{M}) \to \L^p(\cal{M})$ such that the linear map
$
\Phi
=\begin{bmatrix}
   v_1  &  P \\
   P^\circ  &  v_2  \\
\end{bmatrix}
\co S^p_2(\L^p(\cal{M})) \to S^p_2(\L^p(\cal{M}))$ is $n$-positive with $\max\{\norm{v_1},\norm{v_2}\}=1$. By composition, note that the map $\Phi^k$ is $n$-positive for any integer $k \geq 1$. Since $P^2=P$, we have $
\Phi^k
=\begin{bmatrix}
   v_1  &  P \\
   P^\circ  &  v_2  \\
\end{bmatrix}^k
=\begin{bmatrix}
   v_1^k  &  P^k \\
   (P^k)^\circ  &  v_2^k  \\
\end{bmatrix}
=\begin{bmatrix}
   v_1^k  &  P \\
   P^\circ  &  v_2^k  \\
\end{bmatrix}$ for any $k\geq 1$. For any integer $m \geq 1$, we infer that the average 
$$
A_{m,\Phi}
=\frac{1}{m} \sum_{k=1}^{m} \Phi^k
=\frac{1}{m}\sum_{k=1}^{m}\begin{bmatrix}
   v_1^k  &  P \\
   P^\circ  &  v_2^k  \\
\end{bmatrix}
=\begin{bmatrix}
   \frac{1}{m}\sum_{k=1}^{m}v_1^k  &  P \\
   P^\circ  &  \frac{1}{m}\sum_{k=1}^{m}v_2^k  \\
\end{bmatrix}
=\begin{bmatrix}
   A_{m,v_1}  &  P \\
   P^\circ  &  A_{m,v_2}  \\
\end{bmatrix}
$$ 
is $n$-positive. The maps $v_1$ and $v_2$ are contractions, hence power-bounded. By \cite[Theorem 8.22 p.~149]{EFHN15}\footnote{\thefootnote. In \cite{EFHN15}, the averages are defined with a sum $\sum_{k=0}^{m-1}$. However, it is obvious that the result is also true with a sum $\sum_{k=1}^{m}$.}, since the Banach space $\L^p(\cal{M})$ is reflexive ($1<p<\infty$), we deduce that the maps $v_1$ and $v_2$ are mean ergodic. This means that the sequences $(A_{m,v_1})$ and $(A_{m,v_2})$ converge for the strong operator topology of the space $\B(\L^p(\cal{M}))$ to some bounded operators $P_1,P_2 \co \L^p(\cal{M}) \to \L^p(\cal{M})$. By \cite[Lemma 8.3 p.~137]{EFHN15}, the operators $P_1$ and $P_2$ are projections. Since each average $A_{m,v_i}$ ($i=1,2$) is clearly contractive, by the strong lower semicontinuity of the norm, the maps $P_1$ and $P_2$ are also contractive. It is obvious that the sequence $(A_{m,\Phi})$ of $n$-positive maps converges strongly (hence weakly) to the bounded operator $\begin{bmatrix}
   P_1  &  P \\
   P^\circ  &  P_2  \\
\end{bmatrix}$. By \cite[Lemma 2.10]{ArK23}, we conclude that the map $\begin{bmatrix}
   P_1  &  P \\
   P^\circ  &  P_2  \\
\end{bmatrix}$ is $n$-positive.   
\end{proof}

The particular case $n=\infty$ gives the following.

\begin{cor}
Let $\cal{M}$ be a von Neumann algebra. Suppose $1<p<\infty$. Let $P \co \L^p(\cal{M}) \to \L^p(\cal{M})$ be a contractively decomposable projection. There exist contractive (completely positive) projections $P_1,P_2 \co \L^p(\cal{M}) \to \L^p(\cal{M})$ such that the map $
\begin{bmatrix}
   P_1  &  P \\
   P^\circ  &  P_2  \\
\end{bmatrix}
\co S^p_2(\L^p(\cal{M})) \to S^p_2(\L^p(\cal{M}))$ is a completely positive projection.
\end{cor}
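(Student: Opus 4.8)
The plan is to observe that this Corollary is exactly the instance $n=\infty$ of Proposition \ref{prop-cd-proj-v1v2-proj}, read through the dictionary recalled just after Definition \ref{Def-pseudo-decomposable}: a map is decomposable precisely when it is $\infty$-pseudo-decomposable, and an $\infty$-positive map is the same thing as a completely positive one. Under these identifications, a contractively decomposable projection $P$ is a contractively $\infty$-pseudo-decomposable projection, and the assertion that $\begin{bmatrix} P_1 & P \\ P^\circ & P_2 \end{bmatrix}$ is a completely positive projection is the assertion that it is an $\infty$-positive projection. So, strictly speaking, one could simply write ``apply Proposition \ref{prop-cd-proj-v1v2-proj} with $n=\infty$.''

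To make the specialization transparent I would retrace the three ingredients of that proof. First, Proposition \ref{Prop-dec-inf-atteint} supplies linear maps $v_1,v_2 \co \L^p(M) \to \L^p(M)$ for which $\Phi = \begin{bmatrix} v_1 & P \\ P^\circ & v_2 \end{bmatrix}$ is completely positive with $\max\{\norm{v_1},\norm{v_2}\}=1$; since complete positivity is stable under composition, every iterate $\Phi^k$, hence every Ces\`aro average $A_{m,\Phi}$, is again completely positive, and the relation $P^2=P$ identifies the off-diagonal corners of $A_{m,\Phi}$ with $P$ and $P^\circ$. Second, because $v_1,v_2$ are contractions on the reflexive space $\L^p(M)$, the mean ergodic theorem \cite[Theorem 8.22]{EFHN} yields strong-operator limits $P_1,P_2$ of $(A_{m,v_1})$ and $(A_{m,v_2})$, which are projections by \cite[Lemma 8.3]{EFHN} and contractive by lower semicontinuity of the norm. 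Third, $A_{m,\Phi} \to \begin{bmatrix} P_1 & P \\ P^\circ & P_2 \end{bmatrix}$ weakly, so the limit is completely positive as a weak limit of completely positive maps, by \cite[Lemma 2.6]{ArK}.

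I do not anticipate a genuine obstacle: the value $n=\infty$ is in fact the smoothest point of the entire range, since complete positivity is preserved by composition, convex combination and pointwise weak limits without any restriction, so the parenthetical caveats ``(and its proof if $1<n<\infty$)'' attached to the finite-$n$ invocations of \cite[Lemma 2.6]{ArK} become superfluous. The one point worth a line of care is that the ergodic averaging is carried out on the honest reflexive Banach space $\L^p(M)$, not at the matrix level, so that \cite[Theorem 8.22]{EFHN} applies directly to $v_1$ and $v_2$, while the complete-positivity bookkeeping is tracked separately through $\Phi$ and its averages.
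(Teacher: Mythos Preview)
Your proposal is correct and matches the paper's approach exactly: the corollary is simply the case $n=\infty$ of Proposition \ref{prop-cd-proj-v1v2-proj} (the paper's lead-in ``The particular case $p=\infty$'' is a typo for $n=\infty$), and your retracing of the ergodic-average argument is precisely the proof of that proposition specialized to complete positivity.
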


Suppose $1 \leq p < \infty$. Let $\cal{M}$ be a $\sigma$-finite ($=$ countably decomposable) von Neumann algebra and $P \co \L^p(\cal{M}) \to \L^p(\cal{M})$ be a positive contractive projection. In \cite[Section 6]{ArR19}, we introduced the support $s(P)$ of $\Ran P$ as the supremum in $\cal{M}$ of the supports of the positive elements in $\Ran P$:
\begin{equation}
\label{support-s(P)}
s(P)\ov{\mathrm{def}}{=}\bigvee_{h \in \Ran P, h \geq 0} s(h).
\end{equation}
Under these assumptions, by \cite[Proposition 6.1]{ArR19}, there exists a positive element $h$ of $\Ran P$ such that 
\begin{equation}
\label{support-h}
s(P)
=s(h).
\end{equation}
We have $P(h)=h$. In this case, by \cite[Proposition 6.4]{ArR19}, we have
\begin{equation}
\label{Invariance-2}
P(y)
=P\big(s(h)ys(h)\big), \quad y \in \L^p(\cal{M}).
\end{equation}

The following theorem is the main result of this paper.

\begin{thm}
\label{main-th-ds-bis}
Let $\cal{M}$ be a $\sigma$-finite von Neumann algebra equipped with a normal faithful state $\varphi$. Suppose $1 < p<\infty$ and $n \in \{1,2,\ldots,\infty\}$. A bounded map $P \co \L^p(\cal{M}) \to \L^p(\cal{M})$ is a contractively $n$-pseudo-decomposable projection if and only if there exist a normal faithful positive linear form $\phi$ on the von Neumann algebra $\M_2(\cal{M})$, two positive elements $h,k \in \L^p(\cal{M})$ with support projection $s(h)$ and $s(k)$, some linear maps $w \co \cal{M} \to s(h)\cal{M}s(k)$, $u_1 \co \cal{M} \to s(h)\cal{M}s(h)$, $u_2 \co \cal{M} \to s(k)\cal{M}s(k)$ such that the map $Q=\begin{bmatrix}
u_1 & w \\
w^\circ  & u_2 \\
\end{bmatrix}$ and the element $H=\begin{bmatrix}
h & 0 \\
0  & k \\
\end{bmatrix}$ of $S^p_2(\L^p(\cal{M}))$ satisfy the following properties:
\begin{enumerate}
\item for any $x \in \L^p(\cal{M})$ we have $P(x)=P(s(h)xs(k))$,

\item $Q$ is a weak* continuous unital contractive $n$-positive map and its restriction on the von Neumann algebra $s(H)\M_2(\cal{M})s(H)$ is a faithful projection,

\item $s(H)$ belongs to the centralizer of $\phi$ and $\phi_{s(H)}=(\tr \ot \varphi)_{s(H)}$,

\item for any $y \in s(H)\M_2(\cal{M})s(H)$ we have
$
(\tr \ot \tr_{\varphi})(H^p Q(y))
=(\tr \ot \tr_{\varphi})(H^p y)$,


\item for any $x \in s(h)\cal{M}s(k)$, we have 
\begin{equation}
\label{relevement-dec-intro}
\quad P\big(h^{\frac{1}{2}}xk^{\frac{1}{2}}\big)
= h^{\frac{1}{2}} w(x) k^{\frac{1}{2}}, \qquad x \in \cal{M}.
\end{equation}
\end{enumerate}
In this case, the restriction of $w$ is a weak* continuous $n$-pseudodecomposable projection and the range of this map admits a structure of $\JW^*$-triple. Moreover, if $n=\infty$ then the range of $w$ admits a structure of $\W^*$-$\TRO$.
\end{thm}

\begin{proof}
By Proposition \ref{prop-cd-proj-v1v2-proj}, there exist contractive ($n$-positive) projections $P_1,P_2 \co \L^p(\cal{M}) \to \L^p(\cal{M})$ such that the map 
\begin{equation}
\label{Phi-P1}
\Phi \ov{\mathrm{def}}{=}
\begin{bmatrix}
   P_1  &  P \\
   P^\circ  &  P_2  \\
\end{bmatrix}
\co S^p_2(\L^p(\cal{M})) \to S^p_2(\L^p(\cal{M}))
\end{equation}
is an $n$-positive projection. We fix a normal faithful state $\varphi$ on $\cal{M}$ such that $\L^p(\cal{M})=\L^p(\cal{M},\varphi)$. Considering the projection $e \ov{\mathrm{def}}{=} \begin{bmatrix}
   s(P_1)  &  0 \\
   0  &  s(P_2) \\
\end{bmatrix}$, we can define with the construction \eqref{Extension-weight1} the normal faithful positive linear form $\phi$ on the von Neumann algebra $\M_2(\cal{M})$ by 
$$
\phi(x)
\ov{\mathrm{def}}{=}(\tr \ot \varphi)_{e}(exe)+(\tr \ot \varphi)_{e^\perp}(e^\perp xe^\perp), \quad x \in \M_2(\cal{M}).
$$
The projection $e$ belongs to the centralizer of $\phi$ and $\phi_e=(\tr \ot \varphi)_e$. By \eqref{support-h}, there exist positive elements $h$ and $k$ of $\Ran P_1$ and $\Ran P_2$ respectively such that $s(h)=s(P_1)$ and $s(k)=s(P_2)$. 
We let $H \ov{\mathrm{def}}{=}\begin{bmatrix}
   h  &  0 \\
   0  &  k \\
\end{bmatrix}$ and we introduce the algebra
$$
\cal{N}
\ov{\mathrm{def}}{=} s(H)\M_2(\cal{M})s(H)
=\begin{bmatrix}
   s(h)  &  0 \\
   0  &  s(k) \\
\end{bmatrix}\M_2(\cal{M}) \begin{bmatrix}
   s(h)  &  0 \\
   0  &  s(k) \\
\end{bmatrix}
=\begin{bmatrix}
   s(h)\cal{M}s(h)  &  s(h)\cal{M}s(k) \\
   s(k)\cal{M}s(h)  &  s(k)\cal{M}s(k) \\
\end{bmatrix}.
$$

\begin{lemma}
\label{final}
For any $x \in \L^p(\cal{M})$, we have $P(s(h)xs(k))=P(x)$.
\end{lemma}

\begin{proof}
If $x \in s(h)^\perp\L^p(\cal{M})$ then we will show that $P(x)=0$. By Lemma \ref{Lemma-Han}, the element
$
\begin{bmatrix} 
|x^*|& x \\ 
x^* & |x|
\end{bmatrix} 
$
of $S^p_{2}(\L^p(\cal{M}))$ is positive. Note that $|x^*|$ belongs to $s(h)^\perp\L^p(\cal{M})$. 
Hence, we have
$$
\begin{bmatrix}
0   & P(x)\\
P^\circ(x^*) & P_2(|x|)
\end{bmatrix}
\ov{\eqref{Invariance-2}}{=} \begin{bmatrix}
P_1(|x^*|) & P(x)\\
P^\circ(x^*) & P_2(|x|)
\end{bmatrix}
\ov{\eqref{Phi-P1}}{=} \Phi\left(\begin{bmatrix}
|x^*|   & x \\
x^* & |x|
\end{bmatrix}\right).
$$
Since $\Phi$ is positive, this element is positive. Using Lemma \ref{Lemma-Matricial-inequality}, we infer that $P(x)=0$. If $x \in \L^p(\cal{M})s(k)^\perp$ then similarly we have $P(x)=0$. Now, if $x \in \L^p(\cal{M})$ we have
\begin{align*}
\MoveEqLeft
P(x)
=P((s(h)+s(h)^\perp)x(s(k)+s(k)^\perp)) \\        
&=P(s(h)xs(k))+P(s(h) x s(k)^\perp)+P(s(h)^\perp x s(k))+ P(s(h)^\perp x s(k)^\perp) \\
&=P(s(h)xs(k)).
\end{align*}
\end{proof}

\begin{remark} \normalfont
For any $x \in S^p_2(\L^p(\cal{M}))$, we deduce from Lemma \ref{final} combined with \eqref{Invariance-2} that 
$$
\Phi(s(H) xs(H))
=\Phi(x).
$$
\end{remark}

By applying Theorem \ref{Lifting-decomposable} to the $n$-pseudo-decomposable map $P \co \L^p(\cal{M}) \to \L^p(\cal{M})$, we see that there exists a linear map $w \co \cal{M} \to s(P_1(h))\cal{M}s(P_2(k))=s(h)\cal{M}s(k)$ such that
\begin{equation}
\label{relevement-E_h-dec}
\quad P\big(h^{\frac{1}{2}}xk^{\frac{1}{2}}\big)
\ov{\eqref{lifting-decomposable}}{=} h^{\frac{1}{2}} w(x) k^{\frac{1}{2}}, \qquad x \in \cal{M}.
\end{equation}
Moreover, this map $w$ is normal and contractively $n$-pseudo-decomposable. Let $u_1 \co \cal{M} \to s(h)\cal{M}s(h)$ and $u_2 \co \cal{M} \to s(k)\cal{M}s(k)$ be the liftings of $n$-positive maps $P_1$ and $P_2$ provided by Theorem \ref{Th-relevement-cp} with respect to the elements $h$ and $k$. We introduce the linear map $Q \ov{\mathrm{def}}{=}\begin{bmatrix}
u_1 & w \\
w^\circ  & u_2 \\
\end{bmatrix} \co \M_2(\cal{M}) \to \cal{N}$. For any element $\begin{bmatrix}
 x_{11} &  x_{12}  \\
 x_{21} &  x_{22}    \\
\end{bmatrix}$ of $\M_2(\cal{M})$, we have 
\begin{align}
\MoveEqLeft
\label{relevement-Phi}
\Phi\left(H^{\frac{1}{2}}\begin{bmatrix}
 x_{11} &  x_{12}  \\
 x_{21} &  x_{22}    \\
\end{bmatrix}H^{\frac{1}{2}}\right)
=\Phi\left(\begin{bmatrix}
h^{\frac{1}{2}} x_{11} h^{\frac{1}{2}} & h^{\frac{1}{2}} x_{12}k^{\frac{1}{2}}  \\
k^{\frac{1}{2}}  x_{21} h^{\frac{1}{2}}  & k^{\frac{1}{2}} x_{22} k^{\frac{1}{2}}   \\
\end{bmatrix}\right) \\
&\ov{\eqref{Phi-P1}}{=} \begin{bmatrix}
P_1(h^{\frac{1}{2}} x_{11} h^{\frac{1}{2}}) & P(h^{\frac{1}{2}} x_{12}k^{\frac{1}{2}})  \\
P^\circ(k^{\frac{1}{2}}  x_{21} h^{\frac{1}{2}})  & P_2(k^{\frac{1}{2}} x_{22} k^{\frac{1}{2}})   \\
\end{bmatrix} 
\ov{\eqref{equa-relevement}\eqref{relevement-E_h-dec}}{=} \begin{bmatrix}
h^{\frac{1}{2}}u_1(x_{11})h^{\frac{1}{2}}& h^{\frac{1}{2}}w(x_{12})k^{\frac{1}{2}} \\
k^{\frac{1}{2}}w^\circ(x_{21})h^{\frac{1}{2}}  & k^{\frac{1}{2}}u_2(x_{22})k^{\frac{1}{2}} \\
\end{bmatrix} \nonumber \\
&=H^{\frac{1}{2}}\begin{bmatrix}
u_1(x_{11})& w(x_{12}) \\
w^\circ(x_{21})  & u_2(x_{22}) \\
\end{bmatrix}H
=H^{\frac{1}{2}}Q\left(\begin{bmatrix}
x_{11}& x_{12} \\
x_{21}  & x_{22} \\
\end{bmatrix}\right)H^{\frac{1}{2}}. \nonumber
\end{align}
Consequently, the map $Q$ is the lifting of the $n$-positive map $\Phi$ provided by Theorem \ref{Th-relevement-cp} with respect to the positive element $H$. Hence this map $Q$ is unital, contractive, normal and $n$-positive.

\begin{lemma}
\label{Lemma-bis-2}
The restriction $Q|\cal{N} \co \cal{N} \to \cal{N}$ is a projection. 
\end{lemma}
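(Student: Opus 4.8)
The plan is to deduce idempotence of $R_{h,k}$ directly from the idempotence of $P$, using the defining lifting relation \eqref{relevement-E_h-dec} as the only bridge between the two maps. The crucial structural observation is that $w_{h,k}$ takes values in $s(h)Ms(k)\subseteq M$, so that $w_{h,k}(x)$ is itself an admissible argument of \eqref{relevement-E_h-dec}. This self-referential feature is what will let me iterate the relation and convert $P^2=P$ into $w_{h,k}^2=w_{h,k}$.

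Concretely, I would compute $P^2\big(h^{\frac{1}{2}}xk^{\frac{1}{2}}\big)$ in two ways for an arbitrary $x\in M$. On the one hand, applying $P$ to both sides of \eqref{relevement-E_h-dec} and then using \eqref{relevement-E_h-dec} once more with the argument $w_{h,k}(x)\in M$ gives $P\big(h^{\frac{1}{2}}w_{h,k}(x)k^{\frac{1}{2}}\big)=h^{\frac{1}{2}}w_{h,k}\big(w_{h,k}(x)\big)k^{\frac{1}{2}}$. On the other hand, since $P^2=P$, the same quantity equals $P\big(h^{\frac{1}{2}}xk^{\frac{1}{2}}\big)=h^{\frac{1}{2}}w_{h,k}(x)k^{\frac{1}{2}}$. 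Equating the two expressions yields
\[
h^{\frac{1}{2}}w_{h,k}\big(w_{h,k}(x)\big)k^{\frac{1}{2}}=h^{\frac{1}{2}}w_{h,k}(x)k^{\frac{1}{2}},\qquad x\in M .
\]

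To cancel the densities I would invoke the injectivity of the map $s(h)Ms(k)\to\L^p(M)$, $z\mapsto h^{\frac{1}{2}}zk^{\frac{1}{2}}$, which is the two-sided analogue of Lemma \ref{lemma2-GL}(1). It follows at once from \eqref{support-properties}: if $h^{\frac{1}{2}}zk^{\frac{1}{2}}=0$ with $z\in s(h)Ms(k)$, then $s_r(h^{\frac{1}{2}})zk^{\frac{1}{2}}=s(h)zk^{\frac{1}{2}}=zk^{\frac{1}{2}}=0$, and in turn $zs_l(k^{\frac{1}{2}})=zs(k)=z=0$. Since both $w_{h,k}(w_{h,k}(x))$ and $w_{h,k}(x)$ lie in $s(h)Ms(k)$, the displayed identity forces $w_{h,k}\big(w_{h,k}(x)\big)=w_{h,k}(x)$ for every $x\in M$. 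Restricting to $z\in M_{h,k}=s(h)Ms(k)$ and using $w_{h,k}(z)\in M_{h,k}$ (so that $R_{h,k}(w_{h,k}(z))=w_{h,k}(w_{h,k}(z))$), I obtain $R_{h,k}^2(z)=R_{h,k}(z)$, i.e. $R_{h,k}$ is idempotent; being already known to be contractive, it is the asserted projection.

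The argument is essentially formal once \eqref{relevement-E_h-dec} is in hand, so there is no substantial obstacle: the only point requiring a short verification is the injectivity of the two-sided multiplication map, and this is a routine strengthening of Lemma \ref{lemma2-GL}(1) via the support implications \eqref{support-properties}. I would take care only to note explicitly that $w_{h,k}$ lands in $M$ (indeed in $s(h)Ms(k)$), which is exactly what legitimizes re-inserting $w_{h,k}(x)$ into \eqref{relevement-E_h-dec}.
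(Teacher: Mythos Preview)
Your proposal is correct and follows essentially the same route as the paper: compute $P^2(h^{1/2}xk^{1/2})$ two ways via the lifting relation \eqref{relevement-E_h-dec} and then cancel the densities. The only cosmetic difference is that the paper cancels by invoking the uniqueness clause of Theorem \ref{Lifting-decomposable} directly, whereas you unpack that uniqueness as the injectivity of $z\mapsto h^{1/2}zk^{1/2}$ on $s(h)Ms(k)$ via \eqref{support-properties}; these are the same fact in different packaging.
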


\begin{proof}
For any $x \in \cal{N}$, we have 
$$
H^{\frac{1}{2}}Q(x)H^{\frac{1}{2}}
\ov{\eqref{relevement-Phi}}{=} \Phi\big(H^{\frac{1}{2}}xH^{\frac{1}{2}}\big)
=\Phi^2\big(H^{\frac{1}{2}}xH^{\frac{1}{2}}\big)
\ov{\eqref{relevement-Phi}}{=} \Phi\big(H^{\frac{1}{2}} Q(x) H^{\frac{1}{2}}\big)
\ov{\eqref{relevement-Phi}}{=} H^{\frac{1}{2}} Q^2(x) H^{\frac{1}{2}}.
$$
Using Lemma \ref{lemma2-GL}, we obtain the conclusion. 
\end{proof} 

\begin{lemma}
\label{Lemma-bis-2}
The projection $Q|\cal{N} \co \cal{N} \to \cal{N}$ is faithful.
\end{lemma}

\begin{proof}
If $x=\begin{bmatrix}
 x_{11} &  x_{12}  \\
 x_{21} &  x_{22}    \\
\end{bmatrix}$ is an element of $\cal{N}_+$ satisfying $Q(x)=0$, we have
$$
\Phi\big(H^{\frac{1}{2}}xH^{\frac{1}{2}}\big)
\ov{\eqref{relevement-Phi}}{=} H^{\frac{1}{2}} Q(x) H^{\frac{1}{2}}
=0.
$$
So by \eqref{relevement-Phi}, we have $P_1(h^{\frac{1}{2}} x_{11} h^{\frac{1}{2}})=0$ and $P_2(k^{\frac{1}{2}} x_{22} k^{\frac{1}{2}})=0$. By \cite[1.6.9]{Dix77}, we have $0 \leq x_{11} \leq \norm{x_{11}}_\infty$ so 
$$
0 \leq h^{\frac{1}{2}}x_{11}h^{\frac{1}{2}} \leq \norm{x_{11}}_{\infty}h.
$$ 
Recall that the norm of a noncommutative $\L^p$-space is strictly monotone. By Lemma \ref{Lemma-monotone}, we see that $h^{\frac{1}{2}}x_{11}h^{\frac{1}{2}}=0$. Since $x_{11}$ belongs to $s(h)\cal{M}s(h)$, we conclude by Lemma \ref{lemma2-GL} that $x_{11}=0$. 
Similarly, we show that $x_{22}=0$. By Lemma \ref{Lemma-Matricial-inequality} (see also \cite[Proposition 1.3.2]{Bha07}), we conclude that $x=0$.
\end{proof} 

Now, we consider the normal faithful positive linear form $\psi$ on the von Neumann algebra $\cal{N}$ defined by
$$
\psi(x) 
\ov{\mathrm{def}}{=} (\tr \ot \tr_{\varphi})(H^p x),\quad x \in \cal{N}.
$$

\begin{lemma} 
\label{Lemma-prservation-state}
The restriction $Q|\cal{N}$ preserves $\psi$ i.e. we have $\psi \circ Q=\psi$.
\end{lemma}  

\begin{proof}
Since $h$ is positive, by Lemma \ref{Lemma-J-lp}, we have $J_{\L^p(\cal{M})}(h)=\norm{h}^{2-p}_{p}h^{p-1}$. By \cite[Corollary 5.2]{PiX03}, the Banach space $\L^p(\cal{M}_h)$ is smooth and strictly convex. Using the contractive dual map $P_1^* \co \L^{p^*}(\cal{M}) \to \L^{p^*}(\cal{M})$ and Lemma \ref{Lemma-smooth}, we see that $P_1^*(\norm{h}^{2-p}_{p}h^{p-1})=\norm{h}^{2-p}_{p}h^{p-1}$, that is
\begin{equation}
\label{equa-inter-400}
P_1^*(h^{p-1})
=h^{p-1}.
\end{equation}
Similarly, we have $P_2^*(k^{p-1})
=k^{p-1}$. Using \cite[Lemma 3.3]{ArK23} in the second equality, we conclude that
\begin{align*}
\MoveEqLeft
\label{equa-inter-400}
\Phi^*(H^{p-1})
\ov{\eqref{Phi-P1}}{=} \begin{bmatrix}
   P_1  &  P \\
   P^\circ  &  P_2  \\
\end{bmatrix}^*\bigg(\begin{bmatrix}
   h  &  0 \\
   0  &  k \\
\end{bmatrix}^{p-1}\bigg)
=\begin{bmatrix}
   P_1^*  &  (P^\circ)^* \\
   P^*  &  P_2^*  \\
\end{bmatrix}\left(\begin{bmatrix}
   h^{p-1}  &  0 \\
   0  &  k^{p-1} \\
\end{bmatrix}\right) \\
&=\begin{bmatrix}
    P_1^*(h^{p-1})  &  0 \\
   0  &  P_2^*(k^{p-1}) \\
\end{bmatrix}
\ov{\eqref{equa-inter-400}}{=} \begin{bmatrix}
   h^{p-1}  &  0 \\
   0  &  k^{p-1} \\
\end{bmatrix}
=H^{p-1}.         
\end{align*}
For any $z \in \L^p(\cal{N})$, it follows that
\begin{equation}
\label{equa-inter-401}
(\tr \ot \tr_{\varphi})\big(H^{p-1}\Phi(z)\big)
=(\tr \ot \tr_{\varphi})\big(\Phi^*(H^{p-1})z\big)
=(\tr \ot \tr_{\varphi})(H^{p-1} z).
\end{equation}
In particular, for any $x \in \cal{N}$, we have
\begin{align*}
\MoveEqLeft
(\tr \ot \tr_{\varphi})\big(H^{p-1}(H^{\frac{1}{2}}Q(x) H^{\frac{1}{2}})\big)
\ov{\eqref{relevement-Phi}}{=} (\tr \ot \tr_{\varphi})\big(H^{p-1}\Phi(H^{\frac{1}{2}} x H^{\frac{1}{2}})\big) \\
&\ov{\eqref{equa-inter-401}}{=} (\tr \ot \tr_{\varphi})\big(H^{p-1}(H^{\frac{1}{2}} x H^{\frac{1}{2}})\big)
\end{align*}
that is $(\tr \ot \tr_{\varphi})\big(H^pQ(x)\big)=(\tr \ot \tr_{\varphi})(H^p x)$. Hence $\psi\big(Q(x)\big)=\psi(x)$.
\end{proof}

Note that $s(h)\cal{M}s(k)$ is a $\W^*$-TRO (hence a $\JBW^*$-triple). So we can use Theorem \ref{Th-youngson} and Theorem \ref{Th-proj-JBW} for the description of the range of the restriction of the map $w$. Consequently we have proved the ``only if'' part of Theorem \ref{main-th-ds-bis}.

\vspace{0.2cm}

Conversely, suppose that the conditions of Theorem \ref{main-th-ds-bis} are satisfied. We introduce the normal faithful positive linear form $\psi$ on the von Neumann algebra $\cal{N} \ov{\mathrm{def}}{=} s(H)\M_2(\cal{M})s(H)$ defined as  the restriction of $(\tr \ot \tr_{\varphi})(H^p\,\cdot)$. Recall that $D_{\psi} \in \L^1(\cal{N},\psi)$ denotes the density operator associated with $\psi$. From \eqref{kappa}, we have a canonical map  $\kappa \co \L^p(\cal{N},\psi) \to \L^p(\cal{N},(\tr \ot \varphi)_{s(H)})$ which induces a completely order and isometric identification. If $x \in \cal{N}$, using Lemma \ref{Lemme-trace coincide} in the third equality, we see that
\begin{align*}
\MoveEqLeft
\label{Magic-Formula}
\tr_{\psi}(D_{\psi} x) 
\ov{\eqref{varphi-trace}}{=} \psi(x) 
= (\tr \ot \tr_{\varphi})(H^p x) 
=(\tr \ot \tr_{\varphi_{s(h)}})(H^p x) \\
&\ov{\eqref{Trace-preserving}}{=} \tr_{\psi}(\kappa^{-1}(H^p x))
=\tr_{\psi}(\kappa^{-1}(H)^p x).         
\end{align*}
We conclude that
\begin{equation}
\label{Magic-formula-2}
D_{\psi}
=\kappa^{-1}(H)^p.
\end{equation}
With the fourth condition of Theorem \ref{main-th-ds-bis}, we can consider by \eqref{Map-extension-Lp} with $\cal{N}$ instead of $\cal{M}$ the contractive $n$-positive operator $Q_p \co \L^p(\cal{N},\psi) \to \L^p(\cal{N},\psi)$ induced by the restriction $Q|\cal{N} \co \cal{N} \to \cal{N}$ of $Q$ and defined by
\begin{equation}
\label{Def-esperance-h}
Q_p\big(D_{\psi}^{\frac{1}{2p}} xD_{\psi}^{\frac{1}{2p}}\big)
\ov{\eqref{Map-extension-Lp}}{=} D_{\psi}^{\frac{1}{2p}}Q(x)D_{\psi}^{\frac{1}{2p}},\quad x \in \cal{N}. 
\end{equation}
For any $x \in \cal{N}$, note that
\begin{align*}
\MoveEqLeft
Q_p^2\big(D_{\psi}^{\frac{1}{2p}} x D_{\psi}^{\frac{1}{2p}}\big)
\ov{\eqref{Def-esperance-h}}{=} Q_p\big(D_{\psi}^{\frac{1}{2p}} Q(x) D_{\psi}^{\frac{1}{2p}}\big)
\ov{\eqref{Def-esperance-h}}{=} D_{\psi}^{\frac{1}{2p}} Q^2(x) D_{\psi}^{\frac{1}{2p}} \\
&=D_{\psi}^{\frac{1}{2p}} Q(x)D_{\psi}^{\frac{1}{2p}}
\ov{\eqref{Def-esperance-h}}{=} Q_p\big(D_{\psi}^{\frac{1}{2p}} x D_{\psi}^{\frac{1}{2p}}\big).
\end{align*}
We deduce that $Q_p^2=Q_p$, i.e. that the map $Q_p$ is a projection. 

We also consider the restrictions $\psi_1$ and $\psi_2$ of the normal positive linear forms $\tr_{\varphi}(h^p\, \cdot)$ and $\tr_{\varphi}(k^p\, \cdot)$ on the von Neumann algebras $s(h)\cal{M}s(h)$ and $s(k)\cal{M}s(k)$. With the canonical identifications $\kappa_1 \co \L^p(s(h)\cal{M}s(h),\psi_1) \to \L^p(s(h)\cal{M}s(h),\varphi_{s(h)})$ and $\kappa_2 \co \L^p(s(k)\cal{M}s(k),\psi_2) \to \L^p(s(k)\cal{M}s(k),\varphi_{s(k)})$, we have similarly to \eqref{Magic-formula-2} the equalities
\begin{equation}
\label{rel-fin99}
D_{\psi_1}
=\kappa_1^{-1}(h)^p
\quad \text{and} \quad
D_{\psi_2}
=\kappa_2^{-1}(k)^p.
\end{equation}
Using again the fourth condition of Theorem \ref{main-th-ds-bis}, we see that for any $x \in \cal{N}$
$$ 
\psi_1(u_1(x))
=\tr_{\varphi}(h^px)=(\tr \ot \tr_{\varphi})\left(H^p Q\left(\begin{bmatrix}
 x &  0  \\
 0 &  0    \\
\end{bmatrix}\right)\right)
=(\tr \ot \tr_{\varphi})\left(H^p \begin{bmatrix}
 x &  0  \\
 0 &  0    \\
\end{bmatrix}\right)
=\psi_1(x).
$$
We deduce that $u_1|s(h)\cal{M}s(h) \co s(h)\cal{M}s(h) \to s(h)\cal{M}s(h)$ preserves $\psi_1$. Similarly, the linear map $u_2|s(k)\cal{M}s(k) \co s(k)\cal{M}s(k) \to s(k)\cal{M}s(k)$ preserves $\psi_2$. By \eqref{Map-extension-Lp}, these maps admit contractive extensions $\tilde{P}_1 \co \L^p(s(h)\cal{M}s(h),\psi_1) \to \L^p(s(h)\cal{M}s(h),\psi_1)$ and $\tilde{P}_2 \co \L^p(s(k)\cal{M}s(k),\psi_2) \to \L^p(s(k)\cal{M}s(k),\psi_2)$ such that
\begin{equation}
\label{ref-final-5}
\tilde{P}_1\big(D_{\psi_1}^{\frac{1}{2p}} xD_{\psi_1}^{\frac{1}{2p}}\big)
\ov{\eqref{Map-extension-Lp}}{=} D_{\psi_1}^{\frac{1}{2p}}u_1(x)D_{\psi_1}^{\frac{1}{2p}}
\quad \text{and} \quad
\tilde{P}_2\big(D_{\psi_2}^{\frac{1}{2p}} xk_{\psi_2}^{\frac{1}{2p}}\big)
\ov{\eqref{Map-extension-Lp}}{=} D_{\psi_1}^{\frac{1}{2p}}u_2(x)D_{\psi_1}^{\frac{1}{2p}}.
\end{equation}
We let $P_1 \ov{\mathrm{def}}{=} \kappa_1 \tilde{P}_1\kappa_1^{-1}$ and $P_2 \ov{\mathrm{def}}{=} \kappa_2 \tilde{P}_2\kappa_2^{-1}$. We have
\begin{align}
\MoveEqLeft
\label{ref-final-4}
P_1\big(h^{\frac{1}{2}}x h^{\frac{1}{2}}\big)
=\kappa_1 \tilde{P}_1\kappa_1^{-1}\big(h^{\frac{1}{2}}x h^{\frac{1}{2}}\big) 
\ov{\eqref{rel-fin99}}{=}\kappa_1 \tilde{P}_1 \big(D_{\psi_1}^{\frac{1}{2p}}x D_{\psi_1}^{\frac{1}{2p}}\big)  \\      
&\ov{\eqref{ref-final-5}}{=} \kappa_1 (D_{\psi_1}^{\frac{1}{2p}}u_1(x)D_{\psi_1}^{\frac{1}{2p}})
\ov{\eqref{rel-fin99}}{=} h^{\frac{1}{2}}u_1(x) h^{\frac{1}{2}} \nonumber
\end{align}
and a similar equality for $P_2$. For any $x \in \cal{N}$, we obtain
\begin{align*}
\MoveEqLeft
\kappa Q_p\kappa^{-1}\big(H^{\frac{1}{2}}x H^{\frac{1}{2}}\big)         
=\kappa Q_p\big(\kappa^{-1}(H)^{\frac{1}{2}}x \kappa^{-1}(H)^{\frac{1}{2}}\big)   
\ov{\eqref{Magic-formula-2}}{=} \kappa Q_p\big(D_{\psi}^{\frac{1}{2p}} x D_{\psi}^{\frac{1}{2p}}\big) \\
&\ov{\eqref{Def-esperance-h}}{=} \kappa (D_{\psi}^{\frac{1}{2p}}Q(x)D_{\psi}^{\frac{1}{2p}}) 
\ov{\eqref{Magic-formula-2}}{=} H^{\frac{1}{2}}Q(x) H^{\frac{1}{2}}
=\begin{bmatrix}
h^{\frac{1}{2}}u_1(x_{11})h^{\frac{1}{2}}& h^{\frac{1}{2}}w(x_{12})k^{\frac{1}{2}} \\
k^{\frac{1}{2}}w^\circ(x_{21})h^{\frac{1}{2}}  & k^{\frac{1}{2}}u_2(x_{22})k^{\frac{1}{2}} \\
\end{bmatrix} \\
&\ov{\eqref{relevement-dec-intro}\eqref{ref-final-4}}{=} \begin{bmatrix}
P_1(h^{\frac{1}{2}} x_{11} h^{\frac{1}{2}}) & P(h^{\frac{1}{2}}x_{12}k^{\frac{1}{2}}) \\
P^\circ(k^{\frac{1}{2}} x_{21} h^{\frac{1}{2}})  & P_2(k^{\frac{1}{2}}x_{22} k^{\frac{1}{2}}) \\
\end{bmatrix}
=\begin{bmatrix}
P_1 & P\\
P^\circ  & P_2 \\
\end{bmatrix}\big(H^{\frac{1}{2}}x H^{\frac{1}{2}}\big) .
\end{align*}
By density, with Lemma \ref{lemma2-GL}, we conclude that
$$
\kappa Q_p\kappa^{-1}
=\begin{bmatrix}
P_1 & P|_{s(h)\L^p(\cal{M})s(k)}\\
P|_{s(h)\L^p(\cal{M})s(k)}^\circ  & P_2 \\
\end{bmatrix}.
$$
Since $s(h)$ and $s(k)$ belong to the centralizer of $\varphi$, the projection $s(H)$ belongs to the centralizer of $\tr \ot \varphi$. So we have an order isometric identification of $\L^p\big(\cal{N},(\tr \ot \varphi)_{s(H)}\big)$ as a subspace of the Banach space $\L^p(\M_2(\cal{M}),\phi)$. By Lemma \ref{Mult-decomposable}, the linear map
\begin{equation*}
\label{Matrice-2-2-Phi-bis}
\Phi
\co S^p_2(\L^p(\cal{M})) \to \L^p(\cal{N},(\tr \ot \varphi)_{s(H)}), \quad \begin{bmatrix}
   x  &  y \\
   z &  t  \\
\end{bmatrix}\mapsto 
\begin{bmatrix}
   s(h)xs(h)  &  s(h)ys(k) \\
   s(k)zs(h)  &  s(k)ts(k)  \\
\end{bmatrix}.
\end{equation*}
is completely positive. Using the first point of Theorem \ref{main-th-ds-bis} in the second equality, we obtain that
$$
\kappa Q_p\kappa^{-1}\Phi
=\begin{bmatrix}
P_1 & P|s(h)\cal{M}s(k)\\
P|s(h)\cal{M}s(k)^\circ  & P_2 \\
\end{bmatrix}\Phi
=\begin{bmatrix}
P_1(s(h)\cdot s(h)) & P\\
P^\circ  & P_2(s(k)\cdot s(k)) \\
\end{bmatrix}
$$
By composition, this map is $n$-positive. Furthermore, we have $\norm{P_1(s(h)\,\cdot\, s(h))}_{\L^p(\cal{M}) \to \L^p(\cal{M})} \leq 1 $ and $\norm{P_2(s(k)\,\cdot\, s(k))}_{\L^p(\cal{M}) \to \L^p(\cal{M})} \leq 1$. We deduce by \eqref{Norm-dec} that the map $P$ is a contractively $n$-pseudo-decomposable projection. The proof is complete. 
\end{proof}

\begin{remark} \normalfont
The case where $\cal{M}$ is not $\sigma$-finite is beyond the scope of this paper. Probably, it suffices to adapt the painful method of \cite{ArR19}. 
\end{remark}

\begin{remark} \normalfont
The author believes that with slight modifications, we can prove the case $p=1$ as in \cite{ArR19}. 
\end{remark}

\begin{remark} \normalfont
If $n=\infty$, we do not know if $w(s(h)\cal{M}s(k))$ is a sub-TRO of the $\W^*$-TRO $s(h)\cal{M}s(k)$. In this case, $w$ would be a TRO-conditional expectation, see Remark \ref{Rem-TRO-cond}. More generally, if $n \geq 1$, we do not know if $w(s(h)\cal{M}s(k))$ is a sub-triple of $s(h)\cal{M}s(k)$. However, it is written in \cite[Lemma 4.1]{BuP02} \cite[Lemma 5.3]{EdR96} that there exists a $\JW^*$-subtriple $X$ of $s(h)\cal{M}s(k)$ such that $X$ is linearly isometric to $w(s(h)\cal{M}s(k))$ and such that $X$ is the range of a weak*-continuous projection on $s(h)\cal{M}s(k)$.
\end{remark}

\section{$\L^p$-spaces associated to $\sigma$-finite $\JBW^*$-triples}
\label{Sec-Lp-JBW}

In this section, we introduce $\L^p$-spaces associated to some suitable $\JBW^*$-triples and we connect these spaces to the nonassociative $\L^p$-spaces introduced in our previous paper \cite{Arh23}. It is important to note that a $\JBW^*$-triple does not admit an involution, contrarily to the case of $\JBW^*$-algebras.

We will use the following result \cite[Corollary 5.3]{Wer18} on complex interpolation. See  \cite{CoS98}, \cite{HaP89}, \cite{LiP64}, \cite{Wat95} and \cite{Wat00}   
for variants of this result. Here, we use the notation $\ovl{Y}$ for the complex conjugate of a Banach space $Y$, that is the same set, with the same addition and norm but with the conjugate multiplication by a scalar. The elements of the Banach space $\ovl{Y}$ are denoted $\ovl{y}$ where $y \in Y$. If $u \co Y \to Y$ is a linear map, we can define a linear map $\ovl{u} \co \ovl{Y} \to \ovl{Y}$ by $\ovl{u}(\ovl{y}) \ov{\mathrm{def}}{=} \ovl{u(y)}$.

\begin{thm} 
\label{Th-Werner-bis}
Let $Y$ be a Banach space. Let $v \co \cal{H} \to Y$ be a contractive injective map from a Hilbert space $\cal{H}$ into $Y$ with dense range. The composition $v \circ \ovl{v^*} \co \ovl{Y^*} \to Y$ defines an interpolation couple and we have $(\ovl{Y^*},Y)_{\frac{1}{2}}
=\cal{H}$. Moreover, for any $0 < \theta < 1$ we have $\ovl{(\ovl{Y^*},Y)_\theta^*}=(\ovl{Y^*},Y)_{1-\theta}$.
\end{thm}

\paragraph{Support projection of a linear functional} 
Consider a $\JBW^*$-algebra $\cal{M}$ equipped with a normal positive linear functional $\varphi$. The support idempotent $e$ of $\varphi$ is the unique projection $e$ of $\cal{M}$ such that $\{x \in  \cal{M} : \varphi(x^* \circ x) = 0\} = U_{1-e}(\cal{M})$, where 
\begin{equation}
\label{def-Ua}
U_x(a) 
\ov{\mathrm{def}}{=} \{x,a,x\}, \quad a \in \cal{M}
\end{equation}
see \cite[Definition 5.10.18 p.~284]{CGRP18}. We have $\varphi(e)=\norm{\varphi}$ by \cite[Proposition 5.10.20 p.~284]{CGRP18}.

\paragraph{Support tripotent of a linear functional} Assume that $X$ is a $\JBW^*$-triple and consider some linear functional $\varphi \in X_*$. By \cite[Definition 5.10.58 p.~300]{CGRP18} \cite[Proposition 2]{FrB85b} there exists a unique tripotent $s(\varphi) \in X$, called the support tripotent of $\varphi$, such that
\begin{enumerate}
\item $\varphi=\varphi\circ P_2(s(\varphi))$,
\item $\varphi|_{X_2(s(\varphi))}$ is a faithful weak* continuous positive functional on the $\JBW^*$-algebra $X_2(s(\varphi))$,
\end{enumerate}
where $P_2(s(\varphi)) \co X \to X$ is the Peirce projection with range $X_2(s(\varphi))$ defined in \eqref{Peirce-1}. Furthermore, we have
\begin{equation}
\label{norm-varphi}
\norm{\varphi}
=\varphi(s(\varphi))=\norm{\varphi|X_2(s(\varphi))}
\end{equation}
and $s(\varphi)$ is the support idempotent of $\varphi|X_2(s(\varphi))$ 
in the $\JBW^*$-algebra $X_2(s(\varphi))$. 

It is essentially shown in \cite[part $(b)$ in the proof of Proposition 2]{FrB85b} and \cite[p.~300]{CGRP18} that 
\begin{equation}
\label{supporttripotents}
\text{if $u$ is a tripotent in $X$ with $\norm{\varphi} = \varphi (u)$, then $s(\varphi) \leq u$.}
\end{equation}

\begin{example} \normalfont
\label{algebras-1}
Consider a $\JBW^*$-algebra $\cal{M}$ equipped with a normal positive linear functional $\varphi$. The support tripotent $s(\varphi)$ of $\varphi$ is equal to the support idempotent $e$ of $\varphi$. Moreover, the support tripotent $s(\varphi)=e$ is complete if and only if $e=1$. By \cite[Proposition 5.10.20 (ii) p. 284]{CGRP18}, this is equivalent to the faithfulness of the functional $\varphi$.

\vspace{0.2cm}

\begin{proof}
We can suppose that $\varphi \not=0$. By \cite[Proposition 5.10.20 (ii) p. 284]{CGRP18}, we have $\varphi(e)=\norm{\varphi}$. Consequently by \eqref{supporttripotents}, we deduce that $s(\varphi) \leq e$. By Lemma \ref{Lemma-smaller}, we see in particular that $s(\varphi)$ is a projection of the $\JBW^*$-algebra $X_2(e)$. We have $\varphi(s(\varphi)) \ov{\eqref{norm-varphi}}{=} \norm{\varphi}=\norm{\varphi} \norm{s(\varphi)}$. By \cite[Proposition 5.10.20 (iii) p. 284]{CGRP18}, we infer that 
$$
\{e,s(\varphi),e\}\ov{\eqref{def-Ua}}{=} U_e(s(\varphi))
=\norm{s(\varphi)}e
=e.
$$ 
Using again Lemma \ref{Lemma-smaller}, we deduce that $e \leq s(\varphi)$. We conclude that $e = s(\varphi)$.

If $e=1$ then $e$ is a unitary. So it is complete by \cite[(1.21) (ii)]{HoN88}. In the opposite direction, if the projection $s(\varphi)=e$ is complete then we have $
P_0(e)(1-e)
\ov{\eqref{Pierce-example-fo}}{=} \{1-e,1-e,1-e\}=1-e$ since $1-e$ is a projection, hence a tripotent. Consequently, $1-e$ belongs to the Peirce-0 subspace $\cal{M}_0(e)$ which is equal to $\{0\}$ since $e$ is complete. We conclude that $e=1$.
\end{proof}
\end{example}

\paragraph{Sesquilinear form associated to a functional}
In \cite[Proposition 1.2]{BaF87} \cite[Proposition 5.10.60 p.~301]{CGRP18}, the authors showed that given an element $\varphi \in X_*$ and an element $z \in X$ such that $\varphi(z)=\|\varphi\|=\norm{z}=1$, we have a positive hermitian sesquilinear form
\begin{equation}
\label{Def-scalar-varphi-bis}
\langle x, y \rangle_{\cal{H}_\varphi}
\ov{\mathrm{def}}{=} \varphi(\{x,y,z\}), \quad x,y \in X
\end{equation} whose associated seminorm 
\begin{equation}
\label{norm-svarphi}
\norm{x}_{\cal{H}_\varphi}
\ov{\mathrm{def}}{=} \sqrt{\varphi(\{x,x,z\})},\quad x \in X
\end{equation}
is independent of $z$. Moreover, by \cite[Lemma 4.1]{EdR98a} the kernel of the seminorm $\norm{\cdot}_{\cal{H}_\varphi}$ is precisely $X_0(s(\varphi))$, that is
\begin{equation}
\label{kernel-seminorm} 
\{ x \in X : \norm{x}_{\cal{H}_\varphi}=0\}
= X_0(s(\varphi)).
\end{equation}

\paragraph{$\sigma$-finite $\JBW^*$-triples}
Following \cite[p.~293]{EdR98a}, we say that a tripotent $u$ in a $\JBW^*$-triple is $\sigma$-finite if any family of pairwise orthogonal nonzero smaller tripotents is at most countable. Clearly, 0 is a $\sigma$-finite tripotent. A $\JBW^*$-triple $X$ is said to be $\sigma$-finite if every tripotent $u$ in $X$ is $\sigma$-finite. By \cite[Theorem 3.2]{EdR98a}, this is is equivalent to saying that every orthogonal subset of tripotents in $X$ is at most countable.

We have the following characterization \cite[Theorem 3.2]{EdR98a} of $\sigma$-finiteness.  

\begin{lemma}
\label{support-sigma-fini}
A tripotent $u$ of a $\JBW^*$-triple $X$ is $\sigma$-finite if and only if it is equal to the support tripotent $s(\varphi)$ for some linear functional $\varphi \in X_*$.
\end{lemma}

\begin{example} \normalfont
\label{Ex-algebra-proj}
Recall that an element $e$ of a $\JBW^*$-algebra is said to be a projection if $e^* = e$ and $e \circ e = e$ and that projections $e$ and $f$ are called orthogonal if $e \circ f = 0$. By \cite[Lemma 7.2 (b)]{HKPP20}, a projection $e$ is a $\sigma$-finite (i.e. any family of pairwise orthogonal smaller nonzero projections is at most countable) if and only if it is a $\sigma$-finite tripotent. 
\end{example}


We will use the following result \cite[Lemma 4.2]{EdR98a}. Recall that $X_2(u)$ and $X_0(v)$ are the ranges of the Peirce projections $P_2(u)$ and $P_0(v)$ defined in \eqref{Peirce-1} and \eqref{Peirce-2}.

\begin{lemma}
\label{Lemma-Edwards}
Let $X$ be a $\JBW^*$-triple and let $v$ be a $\sigma$-finite tripotent in $X$. Let $u$ be a tripotent in $X$ such that $X_2(u)_{\sa} \cap X_0(v)=\{0\}$. Then $u$ is $\sigma$-finite.
\end{lemma}

Now, we give the following characterization of $\sigma$-finite $\JBW^*$-triples. We refer to \cite[Theorem 4.4]{EdR98a} for a lot of other characterizations of $\sigma$-finite $\JBW^*$-triples.

\begin{prop}
\label{Prop-carac-sigma}
A $\JBW^*$-triple $X$ is $\sigma$-finite if and only if $X$ admits a linear functional $\varphi \in X_*$ such that the support tripotent $s(\varphi)$ is complete.
\end{prop}

\begin{proof}
$\Rightarrow$: By \cite[Fact 5.7.25 p.~226]{CGRP18}, there exists a complete tripotent $u$, which is $\sigma$-finite since $X$ is $\sigma$-finite. With Lemma \ref{support-sigma-fini}, we deduce that $u$ is equal to the support tripotent $s(\varphi)$ for some linear functional $\varphi \in X_*$.

$\Leftarrow$: Suppose that $X$ admits a functional $\varphi \in X_*$ such that the support tripotent $s(\varphi)$ is complete, i.e. $X_0(s(\varphi))=\{0\}$. Lemma \ref{support-sigma-fini} says that $s(\varphi)$ is $\sigma$-finite. Now, if $u$ is a tripotent in $X$ then $X_2(u)_{\sa} \cap X_0(s(\varphi))=\{0\}$. We deduce with Lemma \ref{Lemma-Edwards} that $u$ is $\sigma$-finite. We conclude that $X$ is $\sigma$-finite.
\end{proof}

\begin{example} \normalfont
Recall that a $\JBW^*$-algebra $\cal{M}$ is $\sigma$-finite if any orthogonal family of non-zero projections is countable. By \cite{Arh23}, a $\JBW^*$-algebra is $\sigma$-finite if and only if there exists a normal faithful state $\varphi$. A $\JBW^*$-algebra $\cal{M}$ is $\sigma$-finite as a $\JBW^*$-algebra if and only if it is $\sigma$-finite as a $\JBW^*$-triple. 
\end{example}

\begin{proof}
$\Rightarrow$: If $\cal{M}$ is $\sigma$-finite as a $\JBW^*$-algebra then there exists a normal faithful state. By Example \ref{algebras-1}, its support tripotent is complete. Consequently, $\cal{M}$ admits a linear functional $\varphi \in X_*$ such that the support tripotent $s(\varphi)$ is complete. We conclude with Proposition \ref{Prop-carac-sigma}.

$\Leftarrow$: Suppose that $\cal{M}$ is $\sigma$-finite as a $\JBW^*$-triple. Every orthogonal subset of tripotents in $A$ is at most countable. Hence every orthogonal subset of projections of $\cal{M}$ is at most countable by Example \ref{order-algebras}.
\end{proof}

\paragraph{$\L^p$-spaces of a $\sigma$-finite $\JBW^*$-triple} Let $X$ be a $\JBW^*$-triple equipped with some functional $\varphi \in X_*$ such that $\norm{\varphi}_{X_*}=1$. We will introduce some embedding of $X$ in its predual $X_*$. For that, we define for any $y \in X$ the linear functional $\varphi_y \co X \to \mathbb{C}$ by
\begin{equation}
\label{def-varphix}
\varphi_y(x)
\ov{\mathrm{def}}{=} \varphi(\{x,y,s(\varphi)\}), \quad x \in X.
\end{equation}

\begin{prop}
\label{prop-contract}
Suppose that the support tripotent $s(\varphi)$ is complete. For any $y \in X$, the conjugate linear map $i \co X \to X_*$, $y \mapsto \varphi_y$ is well-defined, injective and contractive. Moreover, its range is dense in the Banach space $X_*$.
\end{prop}

\begin{proof}
Recall that the triple product of a $\JBW^*$-triple is separately weak* continuous, see e.g. \cite[Theorem 5.7.20 p.~224]{CGRP18}. Hence for any $y \in X$ the map $x \mapsto \{x,y,s(\varphi)\}$ is weak* continuous. So by composition the functional $\varphi_y$ is weak* continuous. In a $\JB^*$-triple, the triple product is contractive, see \cite[Corollary 7.1.7 p.~440]{CGRP18}, \cite[Corollary 3]{FrB86} and \cite[p.~215]{Chu12}. Consequently for any $x,y \in X$, we have
$$
|\varphi_y(x)|
\ov{\eqref{def-varphix}}{=} |\varphi(\{x,y,s(\varphi)\})| 
\leq \norm{\{x,y,s(\varphi)\}}_X
\leq \norm{x}_X \norm{y}_X \norm{s(\varphi)}_X
\leq \norm{x}_X \norm{y}_X.
$$
We deduce that the map $i$ is contractive. Let $y \in X$. Suppose $\varphi_y=0$. We have
$$
\norm{y}_{\cal{H}_\varphi}^2
\ov{\eqref{norm-svarphi}}{=} \varphi(\{y,y,s(\varphi)\})
\ov{\eqref{def-varphix}}{=} \varphi_y(y)
=0.
$$
Since the tripotent $s(\varphi)$ is complete, we have $X_0(s(\varphi))=\{0\}$. By \eqref{kernel-seminorm}, we see that $\norm{\cdot}_{\cal{H}_\varphi}$ is a norm on $X$. We obtain that $y=0$. We conclude that the linear map $i$ is injective.

If $x \in X$ belongs the annihilator $i(X)^\perp$ of $i(X)$, we have 
$$
\norm{x}_{\cal{H}_\varphi}^2
\ov{\eqref{norm-svarphi}}{=} \varphi(\{x,x,s(\varphi)\}))
\ov{\eqref{def-varphix}}{=} \varphi_{x}(x)
=\la i(x), x \ra_{X_*,X}
=0.
$$ 
Using again the fact that $\norm{\cdot }_\varphi$ is a norm, we infer that $x=0$. We obtain that $i(X)^\perp=\{0\}$. By \cite[Proposition 1.10.15 (c) p. 93]{Meg98}, we conclude that the range $i(X)$ of $i$ is dense in the space $X_*$. 
\end{proof}

If $X$ is a complex Banach space then the map $j \co \ovl{X} \to X_*$, $\ovl{y} \mapsto \varphi_y$ is linear. Hence $(\ovl{X},X_*)$ is an interpolation couple of complex Banach spaces. In the spirit of the construction of noncommutative $\L^p$-spaces of Kosaki \cite{Kos84} and nonassociative $\L^p$-spaces of \cite{Arh23}, we introduce the following definition by using the complex interpolation method.

\begin{defi}
\label{defi-Lp}
Let $X$ be a $\JBW^*$-triple equipped with a functional $\varphi \in X_*$ with $\norm{\varphi}_{X_*}=1$ such that the support tripotent $s(\varphi)$ is complete. For any $1 < p < \infty$, we let
$$
\L^p(X,\varphi)
\ov{\mathrm{def}}{=} (\ovl{X},X_*)_{\frac{1}{p}}.
$$
We say that this space is the $\L^p$-space associated with $X$ and $\varphi$.
\end{defi}
We shall write simply $\L^p(X)$ when there is no ambiguity on the functional $\varphi$. Note that the sum $\ovl{X}+X_*$ is isometric to $X_*$ and that the intersection  $\ovl{X} \cap X_*=j(\ovl{X})$, whose norm is defined by \eqref{norm-intersection}, is isometric to $\ovl{X}$. By \cite[Proposition 2.4 p.~50]{Lun18}, we have the contractive inclusions $\ovl{X} \subset \L^p(X) \subset X_*$. Moreover, by \cite[Theorem 4.2.2 p. 91]{BeL76} the subspace $\ovl{X}$ is dense in the Banach space $\L^p(X)$. We let $\L^1(X,\varphi) \ov{\mathrm{def}}{=} X_*$.

\begin{example} \normalfont
Let $\cal{M}$ be a $\JBW^*$-algebra with product $\circ$ equipped with a normal faithful state $\varphi$. Recall that we can see $\cal{M}$ as a $\JBW^*$-triple, see  Example \ref{Ex-JBW-algebra-triple}. In \cite{Arh23}, we introduced the embedding $i \co \cal{M} \to \cal{M}_*$, $y \mapsto \varphi(y \circ \cdot)$ for defining nonassociative $\L^p$-spaces associated to $(\cal{M},\varphi)$. We will describe the link between these spaces and the ones of Definition \ref{defi-Lp}.

By Example \ref{algebras-1}, the support idempotent of the functional $\varphi$ is equal to the unit 1 of the $\JBW^*$-algebra $\cal{M}$ and equal to its support tripotent $s(\varphi)$. Now, for any $x,y \in \cal{M}$, we have
\begin{equation}
\label{Inter-897}
\varphi_y(x)
\ov{\eqref{def-varphix}}{=} \varphi(\{x,y,s(\varphi)\})
=\varphi(\{x,y,1\})
\ov{\eqref{triple-prod-JBWstar}}{=} 
\varphi(y^* \circ x).
\end{equation}
Moreover, the \textit{linear} map $\Phi \co \ovl{\cal{M}} \to \cal{M}$, $\ovl{y} \mapsto y^*$ is an isometric isomorphism by \eqref{def-JBstar}. Furthermore, for any $y \in \cal{M}$, observe that 
$$
i(\Phi(\ovl{y}))
=i(y^*)
=\varphi(y^* \circ \cdot)
\ov{\eqref{Inter-897}}{=} \varphi_y
=j(\ovl{y}).
$$
We deduce the following commutative diagram.
$$
\xymatrix @R=1cm @C=2cm{
    \cal{M} \ar[r]^{i}   & \cal{M}_*    \\
   \ovl{\cal{M}} \ar[ru]_{j} \ar[u]^{\Phi} &  
}
$$
By \cite[Remark 2.7.8 p.~59]{Pis03}, we conclude that there exist isometric isomorphisms between the spaces of Definition \ref{defi-Lp} and the ones of \cite{Arh23}. In other words, the spaces defined in Definition \ref{defi-Lp} generalize the nonassociative $\L^p$-spaces defined in \cite{Arh23}.
\end{example}

Let $X$ be a $\JBW^*$-triple equipped with a functional $\varphi \in X_*$ such that $s(\varphi)$ is complete. The formula \eqref{Def-scalar-varphi-bis} defines a complex scalar product. We denote by $\cal{H}_\varphi$ the associated Hilbert space, that is the completion of $X$ for the associated norm \eqref{norm-svarphi}. In the next result, we will identify the space $\L^2(X)$ to this Hilbert space and will describe the dual of the Banach space $\L^p(X)$.

\begin{prop}
\label{prop-L2-nonassociative}
Let $X$ be a $\JBW^*$-triple equipped with a functional $\varphi \in X_*$ with $\norm{\varphi}_{X_*}=1$ such that its support tripotent $s(\varphi)$ is complete. 
\begin{enumerate}
	\item The Banach space $\L^2(X)$ is linearly isometric to the Hilbert space $\cal{H}_\varphi$. More precisely, the norms of $\L^2(X)$ and of $\cal{H}_\varphi$ coincide on their common linear subspace $X$.
	\item Suppose $1 < p < \infty$. We have $\ovl{(\L^p(X))^*}=\L^{p^*}(X)$ isometrically.
\end{enumerate}
\end{prop}

\begin{proof}
Since the triple product is contractive \cite[Corollary 7.1.7 p.~440]{CGRP18} \cite[Corollary 3]{FrB86}, we have for any $x \in X$
$$
\norm{x}_{\cal{H}_\varphi}
\ov{\eqref{norm-svarphi}}{=} \sqrt{\varphi(\{x,x,s(\varphi)\})}
\leq \sqrt{\norm{\varphi} \norm{\{x,x,s(\varphi)\})}}
\leq \sqrt{\norm{x}_X \norm{x}_X \norm{s(\varphi)}_X}
=\norm{x}_X.
$$
So, the canonical map $u \co X \to \cal{H}_\varphi$ is contractive and injective with dense range, hence with weak* dense range. It is easy to check that $u$ is weak* continuous. Indeed, consider a \textit{bounded} net $(x_i)$ of $X$ such that $x_i \to x$ for the weak* topology of the the dual Banach space $X$. Since the triple product of a $\JBW^*$-triple is separately weak* continuous \cite[Theorem 5.7.20 p. 224]{CGRP18}, we have for any $y \in X$
$$
\la x_i, y \ra_{\cal{H}_\varphi}
\ov{\eqref{Def-scalar-varphi-bis}}{=} \varphi(\{x_i,y,s(\varphi)\})
\xra[i]{} \varphi(\{x,y,s(\varphi)\})
\ov{\eqref{Def-scalar-varphi-bis}}{=} \la x, y\ra_{\cal{H}_\varphi}.
$$ 
Since $X$ is dense in the space $\cal{H}_\varphi$ and since the net is bounded, it converges to $x$ for the weak* topology of $\cal{H}_\varphi$ by \cite[Exercise~2.71 p.~234]{Meg98}. With \cite[Corollary 5.1.20 p.~7]{CGRP18}, we conclude that $u$ is weak* continuous. By \cite[Theorem 3.1.17 p.~290]{Meg98}, its preadjoint $v \ov{\mathrm{def}}{=} u_* \co (\cal{H}_{\varphi})_* \to X_*$ is injective and has dense range. Note that $(\cal{H}_{\varphi})_*=\ovl{\cal{H}}_{\varphi}$. For any $x,y \in X$, we have
\begin{equation}
\label{Inter-102}
\varphi(\{x,y,s(\varphi)\})
=\la u(x),\ovl{y} \ra_{\cal{H}_\varphi,\ovl{\cal{H}}_\varphi}
=\la x,u_*(\ovl{y}) \ra_{X,X_*}.
\end{equation}
Hence $
v(\ovl{y})
=u_*(\ovl{y})
\ov{\eqref{Inter-102}}{=} \varphi(\{\cdot,y,s(\varphi)\})
\ov{\eqref{def-varphix}}{=} \varphi_y$. So
$$
v \circ \ovl{v^*}(\ovl{y})
=v \circ \ovl{u}(\ovl{y})
=v \circ \ovl{u(y)}
=v(\ovl{y})
=\varphi_y.
$$
Now, it suffices to use Theorem \ref{Th-Werner-bis} with $Y=X_*$ since the composition $v \circ \ovl{v^*} \co \ovl{Y^*} \to Y$ is equal to $j \co \ovl{X} \to X_*$, $\ovl{y} \mapsto \varphi_y$.
\end{proof}

Now, we give some geometric properties of these $\L^p$-spaces.

\begin{prop}
\label{Prop-smooth-bis}
Let $X$ be a $\JBW^*$-triple equipped with a functional $\varphi \in X_*$ with $\norm{\varphi}_{X_*}=1$ such that $s(\varphi)$ is complete. Suppose $1<p<\infty$. The Banach space $\L^p(X)$ is uniformly convex (hence reflexive) and uniformly smooth.
\end{prop}

\begin{proof}
By the reiteration theorem \cite[Theorem 4.6.1 p.~101]{BeL76}, the Banach space $\L^p(X)$, $1 < p < 2$ is a complex interpolation space between the predual $X_*$ and the space $\L^2(X)$. Since the latter is an Hilbert space by Proposition \ref{prop-L2-nonassociative}, it results from \cite{CwR82} that $\L^p(X)$ is uniformly convex. The same argument works for $2 < p < \infty$, since then the space $\L^p(X)$ is a complex interpolation space between $\ovl{X}$ and $\L^2(X)$. The last assertion is a consequence of \cite[Theorem 5.5.12 p.~500]{Meg98} which says that a normed space is uniformly convex if and only if its dual space is uniformly smooth.
\end{proof}

We finish this section with the following result which will be used in Example \ref{Ex-bicontractive}.

\begin{prop}
\label{prop-iso-Lp}
Let $X$ be a $\JBW^*$-triple equipped with a functional $\varphi \in X_*$ with $\norm{\varphi}_{X_*}=1$ such that $s(\varphi)$ is complete. Let $\alpha$ be an automorphism of $X$ satisfying $\varphi \circ \alpha = \varphi$. Then, for each $1 \leq p < \infty$, the map $\alpha$ induces a surjective isometry $\alpha_p \co \L^p(X) \to \L^p(X)$.
\end{prop}

\begin{proof}
Recall that a triple automorphism is an isometry, see e.g. \cite[Theorem 3.1.20 p.~183]{Chu12}. Hence $\norm{\alpha^{-1}(s(\varphi))}_X=\norm{s(\varphi)}_X=1$ and $\varphi(\alpha^{-1}(s(\varphi)))=\varphi(s(\varphi))=1$. Consequently $\varphi(\{x,y,\alpha^{-1}(s(\varphi))\}) 
\ov{\eqref{Def-scalar-varphi-bis}}{=} \langle x, y \rangle_{\cal{H}_\varphi}$ for any $x,y \in X$. Using the invariance $\varphi \circ \alpha = \varphi$, we see that for any $x,y \in X$
\begin{align*}
\MoveEqLeft
\varphi_{\alpha(y)}(\alpha(x))
\ov{\eqref{def-varphix}}{=} \varphi(\{\alpha(x),\alpha(y),s(\varphi)\}) 
=\varphi(\alpha(\{x,y,\alpha^{-1}(s(\varphi))\})) \\
&=\varphi(\{x,y,\alpha^{-1}(s(\varphi))\})
= \langle x, y \rangle_{\cal{H}_\varphi}
\ov{\eqref{Def-scalar-varphi-bis}}{=}\varphi(\{x,y,s(\varphi)\})
\ov{\eqref{def-varphix}}{=} \varphi_y(x). 
\end{align*}
We infer that the map $j(X) \to j(X)$, $\varphi_y \mapsto \varphi_{\alpha(y)}$, induces a surjective isometry $\alpha_1 \co \L^1(X) \to \L^1(X)$, which sends $j(X)$ into itself isometrically since $\alpha$ is an isometry. Consequently, we obtain the result by interpolation (here we use the fact that a linear contraction with contractive linear inverse is necessarily isometric). 
\end{proof}

\begin{remark} \normalfont
\label{remark-link}
If $X$ is a $\W^*$-$\TRO$, the link between these $\L^p$-spaces and the rectangular $\L^p$-spaces of Section \ref{Sec-complementation} is unclear.
\end{remark}

\section{Open questions}
\label{Sec-questions}
 
The following naive conjecture tries to clarify the open question of \cite[p.~99]{ArF78} on contractively complemented subspaces of noncommutative $\L^p$-spaces. Moreover, a clarification of Remark \ref{remark-link} is necessary in order to see if it is a good statement.

\begin{conj}
\label{conj1}
Suppose $1 < p < \infty$ with $p \not=2$. Let $Y$ be a Banach space. Then $Y$ is isometric to a contractively complemented subspace of a Haagerup noncommutative $\L^p$-space $\L^{p}(\cal{M},\psi)$ where $\cal{M}$ is a $\sigma$-finite von Neumann algebra equipped with a normal faithful state $\psi$ if and only if $Y$ is isometric to the $\L^p$-space $\L^p(X,\varphi)$ of a $\JW^*$-triple $X$ equipped with a linear functional $\varphi \in X_*$ with $\norm{\varphi}_{X_*}=1$ such that the support tripotent $s(\varphi)$ is complete.
\end{conj}

It is not clear if the following question has an affirmative answer. A suitable notion of trace exists on $\JBW^*$-algebras.

\begin{quest}
\label{conj2}
Does there exists a useful notion of trace on $\JBW^*$-triples which allows us to define $\L^p$-spaces associated to $\JBW^*$-triples ?
\end{quest}

We continue with the following precise question which asks if the category of $\L^p$-spaces of $\JBW^*$-triples and contractions is projectively stable \cite[p.~295]{NeR11}.

\begin{quest}
\label{quest-1}
Suppose $1 < p < \infty$ with $p \not=2$. Is it true that each contractively complemented subspace of an $\L^p$-space of $\JBW^*$-triple is isometric to another $\L^p$-space of a $\JBW^*$-triple ?
\end{quest}

Recall that a contractive projection $P \co X \to X$ on a Banach space $X$ is said to be bicontractive if the projection $\Id-P$ is also contractive and that an isometry $T \co X \to X$ such that $T^2=\Id$ is called a symmetry (or involutive isometry). It is easy to see and well-known that every symmetry $T \co X \to X$ gives rise to a bicontractive projection $P=\frac{1}{2}(\I+T)$ with complement $\Id-P=\frac{1}{2}(\Id-T)$. Nevertheless, if $P \co X \to X$ is a bicontractive projection on $X$ then in general $T=2P-\Id$ need not be a symmetry. It is a classical topic to determine if a Banach space (or a class of Banach spaces) has the property that each bicontractive projection $P \co X \to X$ has the form $P=\frac{1}{2}(\I+T)$ for some symmetry $T \co X \to X$. This problem is explicitly written in \cite[Problem 2 p.~261]{Rus94}.

It is known that $\L^p$-spaces of measures spaces and $\JB^*$-triples have this property, see \cite{ByS72}, \cite{BeL77} and \cite{FrB87}. Moreover, if $1 \leq p \leq \infty$ it is proved in \cite[Theorem 9.1]{ArF92} that each Schatten space $S^p$ has also this property. Consequently, the following problem is natural.

\begin{prob}
\label{prob-1}
Suppose $1 < p < \infty$ with $p \not=2$. Describe arbitrary bicontractive projections acting on noncommutative $\L^p$-spaces.
\end{prob}

It is easy to give examples of bicontractive projections acting on noncommutative $\L^p$-spaces.

\begin{example} \normalfont
\label{Ex-bicontractive}
Let $\cal{M}$ be a von Neumann algebra equipped with a normal faithful state $\varphi$. Consider a $*$-automorphism $\alpha$ of $\cal{M}$ satisfying $\alpha^2=\Id_\cal{M}$ and preserving the state, i.e. $\varphi \circ \alpha = \varphi$. Then $\frac{1}{2}(\Id+\alpha_p) \co \L^p(\cal{M}) \to  \L^p(\cal{M})$ is a bicontractive projection.
\end{example}

Now, we state a problem similar to Problem \ref{prob-1}.

\begin{prob}
\label{prob-2}
Suppose $1 < p < \infty$ with $p \not=2$. Describe the bicontractive projections acting on the $\L^p$-spaces $\L^p(X)$ of a $\JBW^*$-triple $X$ equipped with a linear functional $\varphi \in X_*$ with $\norm{\varphi}_{X_*}=1$ such that the support tripotent $s(\varphi)$ is complete.
\end{prob}

Similarly to Example \ref{Ex-bicontractive}, we can give examples of bicontractive projections on the $\L^p$-spaces of a $\JBW^*$-triple. 

\begin{example} \normalfont
\label{Ex-bicontractive}
Let $X$ be a $\JBW^*$-triple equipped with a linear functional $\varphi \in X_*$ with $\norm{\varphi}_{X_*}=1$ such that the support tripotent $s(\varphi)$ is complete. If $\alpha$ is an automorphism of $X$ satisfying $\alpha^2=\Id_X$ and $\varphi \circ \alpha = \varphi$ then with Proposition \ref{prop-iso-Lp}, we see that $\frac{1}{2}(\Id+\alpha_p) \co \L^p(X) \to  \L^p(X)$ is a bicontractive projection. Note that isometries of Cartan factors are described in \cite[Chapter 19]{Isi19}.
\end{example}

Let $\cal{M}$ be a von Neumann algebra. Recall that the noncommutative $\L^p$-space $\L^p(\cal{M},\varphi)$ is independent of the normal semifinite faithful weight $\varphi$ up to an isometric isomorphism, see \cite[p.~59]{Ter81} and \cite[Theorem 5.1]{Ray03}. So the next question is natural.
 
\begin{quest} 
\label{Open-quest}
Suppose $1 < p < \infty$. Let $X$ be a $\JBW^*$-triple equipped with a functional $\varphi \in X_*$ with $\norm{\varphi}_{X_*}=1$ such that $s(\varphi)$ is complete. Is it true that the Banach space $\L^p(X,\varphi)$ is independent from $\varphi$ ?
\end{quest}

We finish with the following question which generalizes the question of \cite[Remark 3.6]{ArK23}. A positive answer would allows us to improve Proposition \ref{prop-cd-proj-v1v2-proj}.

\begin{quest} 
\label{Open-quest}
Suppose $1 < p < \infty$. Let $\cal{M}$ and $\cal{N}$ be von Neumann algebras. Consider some $n \in \{1,2,\ldots,\infty\}$. If $T \co \L^p(\cal{M}) \to \L^p(\cal{M})$ is a contractively $n$-pseudo-decomposable map, does there exist some linear maps $v_1,v_2$ such that the map $\Phi$ of \eqref{Matrice-2-2-Phi} is $n$-positive and contractive ?
\end{quest}

\vspace{0.2cm}

\textbf{Acknowledgment}.
The author acknowledges support by the grant ANR-18-CE40-0021 (project HASCON) of the French National Research Agency ANR. The author would like to thank Ondrej Kalenda for a useful discussion on $\sigma$-finite $\JBW^*$-triples, Miguel Cabrera Garcia and {\'A}ngel Rodriguez Palacios for providing some useful information on $\JBW^*$-triples and finally Hermann Pfitzner for some corrections. Finally, the author would like to thank the referees for suggestions and corrections.

\small

{\footnotesize

\vspace{0.2cm}

\noindent C\'edric Arhancet\\ 
\noindent 6 rue Didier Daurat, 81000 Albi, France\\
URL: \href{http://sites.google.com/site/cedricarhancet}{https://sites.google.com/site/cedricarhancet}\\
\url{cedric.arhancet@protonmail.com}\\

}

\end{document}